\newtheorem{defn}{Definition}
\newtheorem{rem}[defn]{Remark}
\newtheorem{assum}[defn]{Assumption}
\newtheorem{thm}[defn]{Theorem}
\newtheorem{cor}[defn]{Corollary}
\providecommand{\R}{\ensuremath \mathbb{R}}
\providecommand{\N}{\ensuremath \mathbb{N}}
\providecommand{\tfin}{t_\regtext{f}}
\providecommand{\S}{\ensuremath \mathcal{S}}
\providecommand{\x}{\mathbf{x}}
\providecommand{\y}{\mathbf{y}}
\providecommand{\z}{\mathbf{z}}
\providecommand{\uu}{\mathbf{u}}
\providecommand{\LL}{\mathcal{L}}
\providecommand{\LLbounds}{\mathcal{L}_{\regtext{bounds}}}
\providecommand{\LLsave}{\mathcal{L}_{\regtext{save}}}
\providecommand{\LLelim}{\mathcal{L}_{\regtext{elim}}}
\providecommand{\B}{\mathcal{B}}
\providecommand{\A}{\mathcal{A}}
\newcommand{\norm}[1]{\left\Vert#1\right\Vert}
\newcommand{\defemph}[1]{\emph{#1}}
\newcommand{\ceil}[1]{\left\lceil#1\right\rceil}
\newcommand{\ts}[1]{\textsuperscript{#1}}
\newcommand{\regtext}[1]{\mathrm{\textnormal{#1}}}
\newcommand{\hi}{_\regtext{hi}}
\newcommand{\des}{_\regtext{des}}
\definecolor{Gray}{gray}{0.9}
\newcolumntype{g}{>{\columncolor{Gray}}c}
\newcommand{\fmincon}{\texttt{fmincon}}
\newcommand{\Nineq}{\alpha}
\newcommand{\Neq}{\beta}
\newcommand{\blue}[1]{{\color{blue} #1}}
\newcommand{\eeq}{\epsilon_\regtext{eq}}
\newcommand{\pu}{p_{\regtext{up}}^*}
\newcommand{\pl}{p_{\regtext{lo}}^*}
\newcommand{\aha}{{\hat{\alpha}}}
\newcommand{\itmx}{(\x, B_p(\x), B_{gi}(\x), B_{hj}(\x))}
\newcommand{\itmy}{(\y, B_p(\y), B_{gi}(\y), B_{hj}(\y))}
\algnewcommand\algorithmicparfor{\textbf{parfor}}
\algnewcommand\algorithmicpardo{\textbf{do}}
\algnewcommand\algorithmicendparfor{\textbf{end\ parfor}}
\title{\LARGE \bf Safe, Optimal, Real-time Trajectory Planning with a Parallel Constrained Bernstein Algorithm}
\author{Shreyas Kousik$^*$, Bohao Zhang$^*$, Pengcheng Zhao$^*$, and Ram Vasudevan
\thanks{* These authors contributed equally to this work.}
\thanks{This work is supported by the Ford Motor Company via the Ford-UM Alliance under award N022977, and the Office of Naval Research under award number N00014-18-1-2575.
The authors are with the Department of Mechanical Engineering, University of Michigan, Ann Arbor, MI. {\tt\small <skousik,jimzhang,pczhao,ramv>@umich.edu}.}%
}
\begin{document}

\maketitle
\thispagestyle{empty}
\pagestyle{plain}

\begin{abstract}
To move through the world, mobile robots typically use a receding-horizon strategy, wherein they execute an old plan while computing a new plan to incorporate new sensor information.
A plan should be dynamically feasible, meaning it obeys constraints like the robot's dynamics and obstacle avoidance; it should have liveness, meaning the robot does not stop to plan so frequently that it cannot accomplish tasks; and it should be optimal, meaning that the robot tries to satisfy a user-specified cost function such as reaching a goal location as quickly as possible.
Reachability-based Trajectory Design (RTD) is a planning method that can generate provably dynamically-feasible plans.
However, RTD solves a nonlinear polynmial optimization program at each planning iteration, preventing optimality guarantees; furthermore, RTD can struggle with liveness because the robot must brake to a stop when the solver finds local minima or cannot find a feasible solution.
This paper proposes RTD*, which certifiably finds the globally optimal plan (if such a plan exists) at each planning iteration.
This method is enabled by a novel Parallelized Constrained Bernstein Algorithm (PCBA), which is a branch-and-bound method for polynomial optimization.
The contributions of this paper are: the implementation of PCBA; proofs of bounds on the time and memory usage of PCBA; a comparison of PCBA to state of the art solvers; and the demonstration of PCBA/RTD* on a mobile robot.
RTD* outperforms RTD in terms of optimality and liveness for real-time planning in a variety of environments with randomly-placed obstacles.
\end{abstract}

\section{Introduction}\label{sec:introduction}

\begin{figure}[t]
    \includegraphics[width=\columnwidth]{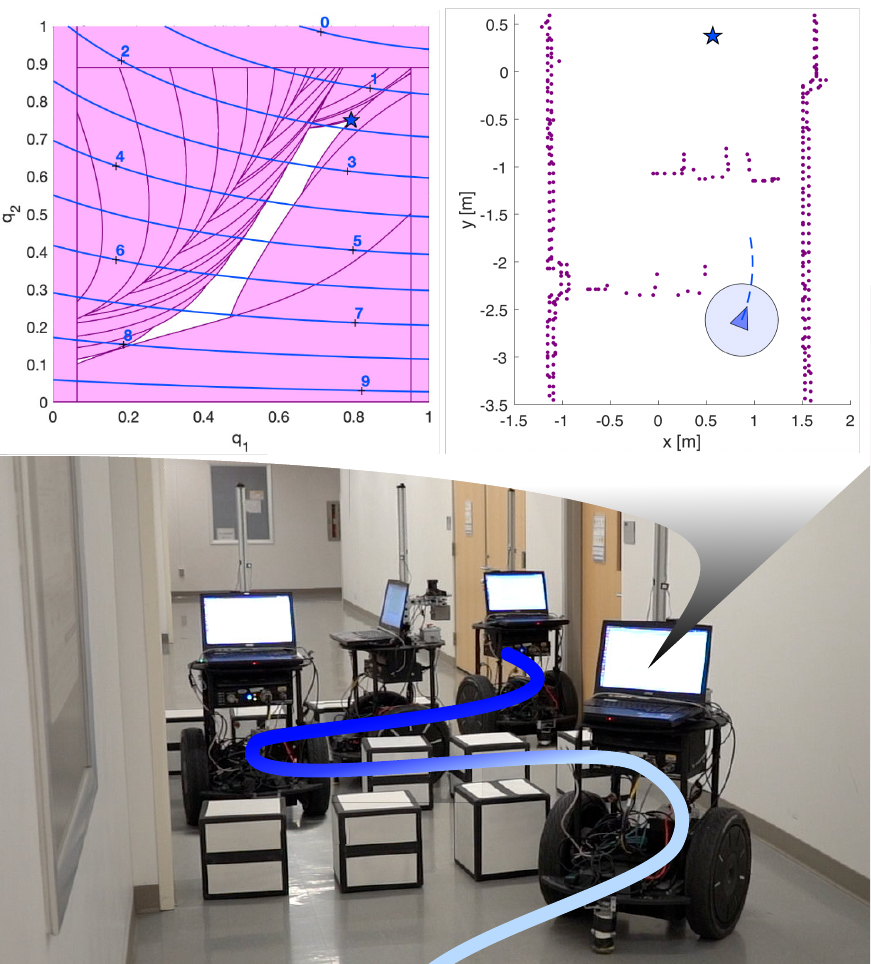}
    \caption{A Segway RMP mobile robot using the proposed PCBA/RTD* method to autonomously navigate a tight obstacle blockade.
    The executed trajectory is shown fading from light to dark blue as time passes, and the robot is shown at four different time instances.
    The top right plot shows the Segway's (blue circle with triangle indicating heading) view of the world at one planning iteration, with obstacles detected by a planar lidar (purple points).
    The top left plot shows the optimization program solved at the same planning iteration; the decision variable is $(q_1, q_2)$, which parameterizes the velocity and yaw rate of a trajectory plan; the pink regions are infeasible with respect to 
    constraints generated by the obstacle points in the right plot; and the blue contours with number labels depict the cost function, which is constructed to encourage the Segway to reach a waypoint (the star in the top right plot).
    The optimal solution found by PCBA is shown as a star on the left plot, in the non-convex feasible area (white).
    This optimal solution generates a provably-safe trajectory for the Segway to track, shown as a blue dashed line in the right plot.
    A video is available at \texttt{\url{https://youtu.be/YcH4WAzqPFY}}.}
    \label{fig:time_lapse}
\end{figure}

For mobile robots to operate successfully in unforeseen environments, they must plan their motion as new sensor information becomes available.
This \defemph{receding-horizon} strategy requires iteratively generating a plan while simultaneouly executing a previous plan.
Typically, this requires solving an optimization program in each planning iteration (see, e.g., \cite{kuwata_phd_receding_horizon}).

The present work considers a mobile ground robot tasked with reaching a global goal location in an arbitrary, static environment.
To assess receding-horizon planning performance, we consider the following characteristics of plans.
First, a plan should be \emph{dynamically feasible}, meaning that it obeys the dynamic description of the robot and obeys constraints such as actuator limits and obstacle avoidance.
Second, a plan should maintain \emph{liveness}, meaning that it keeps the robot moving through the world without stopping frequently to replan, which can prevent a robot from achieving a task in a user-specified amount of time.
Third, a plan should be \emph{optimal} with respect to a user-specified cost function, such as reaching a goal quickly.

Ensuring that plans have these characteristics is challenging for several reasons.
First, robots typically have nonlinear dynamics; this means that creating a dynamically-feasible plan typically requires solving a nonlinear program (NLP) at runtime.
However, it is difficult to certify that an NLP can be solved in a finite amount of time, meaning that the robot may have to sacrifice liveness.
Furthermore, even if a robot has linear dynamics, the cost function may be nonlinear; in this case, it is challenging to certify optimality due to the presence of local minima.

This paper extends prior work on Reachability-based Trajectory Design (RTD).
RTD is able to provably generate \emph{dynamically-feasible} trajectory plans in real time, but cannot guarantee optimality or liveness (the robot will generate plans in real time, but may brake to a stop often).
We address this gap by proposing the \textbf{Parallel Constrained Bernstein Algorithm (PCBA)}, which provably finds globally-optimal solutions to Polynomial Optimization Problems (POPs), a special type of NLP.
We apply PCBA to RTD to produce an optimal version of RTD, which we call \textbf{RTD*} in the spirit of the well-known RRT* algorithm \cite{karaman_rrt_star}.
We show on hardware that RTD* demonstrates \emph{liveness} (in comparison to RTD) for trajectory optimization.
For the remainder of this section, we discuss related work, then state our contributions.

\subsection{Related Work}\label{sec:intro:related_work}

\subsubsection{Receding-horizon Planning}
A variety of methods exist that attempt receding-horizon planning while maintaining dynamic feasibility, liveness, and optimality.
These methods can be broadly classified by whether they perform sampling or solve an optimization program at each planning iteration.
Sampling-based methods typically either attempt to satisfy liveness and dynamic feasibility by choosing samples offine \cite{pivtoraiko_state_lattice,majumdar_funnel_library}, or attempt to satisfy optimality at the potential expense of liveness and dynamic feasibility \cite{karaman_rrt_star}.
Optimization-based methods attempt to find a single optimal trajectory.
These methods typically have to sacrifice dynamic feasibility (e.g., by linearizing the dynamics) to ensure liveness \cite{muske_linear_mpc}, or sacrifice liveness to attempt to satisfy dynamic feasibility \cite{yi_NMPC,dfk2019_fastrack} (also see \cite[\S 9]{kousik2018bridging} and \cite{gpopsii}).

\subsubsection{Reachability-based Trajectory Design}
Reachability-based Trajectory Design (RTD) is an optimization-based receding horizon planner that requires solving a POP at each planning iteration \cite{kousik2018bridging}.
RTD specifies plans as parameterized trajectories.
Since these trajectories cannot necessarily be perfectly tracked by the robot, RTD begins with an offline computation of a Forward Reachable Set (FRS).
The FRS contains every parameterized plan, plus the tracking error that results from the robot not tracking any plan perfectly.
At runtime, in each planning iteration, the FRS is intersected with sensed obstacles to identify all parameterized plans that could cause a collision (i.e., be dynamically infeasible).
This set of unsafe plans is represented as a (finite) list of polynomial constraints, and the user is allowed to specify an arbitrary (not necessarily convex) polynomial cost function, resulting in a POP.
At each planning iteration, either the robot successfully solves the POP to get a new plan, or it continues executing its previously-found plan.
While the decision variable is typically only two- or three-dimensional, each POP often has hundreds of constraints, making it challenging to find a feasible solution in real-time \cite{kousik2018bridging}.
Each plan includes a braking maneuver, ensuring that the robot can always come safely to a stop if the POP cannot be solved quickly enough in any planning iteration.

Note, for RTD, \defemph{optimality} means finding the optimal solution to a POP at each planning iteration.
The cost function in RTD's POPs typically encode behavior such as reaching a waypoint between the robot's current position and the global goal (e.g., \cite[\S 9.2.1]{kousik2018bridging}; RTD attempts to find the best dynamically feasible trajectory to the waypoint.
RTD does not attempt to find the \emph{best waypoints} themselves (best, e.g., with respect to finding the shortest path to the global goal).
Such waypoints can be generated quickly by algorithms such as A* or RRT* by ignoring dynamic feasibility \cite{karaman_rrt_star,kousik2018bridging}.

\subsubsection{Polynomial Optimization Problems}
POPs require minimizing (or maximizing) a polynomial objective function, subject to polynomial equality or inequality constraints.
As a fundamental class of problems in non-convex optimization, POPs arise in various applications, including signal processing \cite{qi2003multivariate,thng1993derivative,mariere2003blind}, quantum mechanics \cite{dahl2007tensor,gurvits2003classical}, control theory \cite{kamyar2014polynomial,kousik2018bridging,ben2015_poly_dyn_BA}, and robotics \cite{rosen2019_se_sync,mangelson2019guaranteed}.
This paper presents a novel parallelized constrained Bernstein Algorithm for solving POPs.

The difficulty of solving a POP increases with the dimension of the cost and constraints, with the number of constraints, and with the number of optima \cite{nocedal2006numerical}.
Existing methods attempt to solve POPs while minimizing time and memory usage (i.e., complexity).
Doing so typically requires placing limitations on one of these axes of difficulty to make solving a POP tractable.
These methods broadly fall into the following categories: derivative-based, convex relaxation, and branch-and-bound.

Drivative-based methods use derivatives (and sometimes Hessians) of the cost and constraint functions, along with first- or second-order optimality conditions \cite[\S 12.3,~\S 12.5]{nocedal2006numerical}, to attempt to find optimal, feasible solutions to nonlinear problems such as POPs.
These methods can find local minima of POPs rapidly despite high dimension, a large number of constraints, and high degree cost and constraints \cite[Chapter 19.8]{nocedal2006numerical}.
However, these methods do not typically converge to global optima without requiring assumptions on the problem and constraint structure (e.g., \cite{qi2004global}).

Convex relaxation methods attempt to find global optima by approximating the original problem with a hierarchy of convex optimization problems.
These methods can be scaled to high-dimensional problems (up to 10 dimensions), at the expense of limits on the degree and sparse structure of the cost function; furthermore, they typically struggle to handle large numbers of constraints (e.g., the hundreds of constraints that arise in RTD's POPs), unless the problem has low-rank or sparse structure \cite{rosen2019_se_sync}.
Well-known examples include the lift-and-project linear program procedure \cite{balas1993lift}, reformulation-linearization technique \cite{sherali1990hierarchy}, and Semi-Definite Program (SDP) relaxations \cite{lasserre2001global,rosen2019_se_sync}.
By assuming structure such as homogeneity of the cost function or convexity of the domain and constraints, one can approximate solutions to a POP in polynomial-time, with convergence to global optima in the limit \cite{de2006ptas,ling2009biquadratic,luo2010semidefinite,so2011deterministic,he2010approximation}.
Convergence within a finite number of convex hierarchy relaxations is possible under certain assumptions (e.g., a limited number of equality constraints \cite{nie2013exact,lasserre2017bounded}).

Branch-and-bound methods perform an exhaustive search over the feasible region.
These methods are typically limited to up to four dimensions, but can handle large numbers of constraints and high degree cost and constraints.
Examples include interval analysis techniques \cite{hansen2003global,vaidyanathan1994global} and the Bernstein Algorithm (BA) \cite{garloff1993bernstein,nataraj2011constrained,sassi2015bernstein}.
Interval analysis requires cost and constraint function evaluations in each iteration, and therefore can be computationally slow.
BA, on the other hand, does not evaluate the cost and constraint functions; instead, BA represents the coefficients of the cost and constraints in the Bernstein basis, as opposed to the monomial basis.
The coefficients in the Bernstein basis provide lower and upper bounds on the polynomial cost and constraints over box-shaped subsets of Euclidean space by using a subdivision procedure \cite{garloff1985convergent,nataraj2007new}.
Note, one can use the Bernstein basis to transform a POP into a linear program (LP) on each subdivided portion of the problem domain, which allows one to find tighter solution bounds that given by the Bernstein coefficients alone \cite{sassi2015bernstein}.
Since subdivision can be parallelized \cite{dhabe2017parallel}, the time required to solve a POP can be greatly reduced by implementing BA on a Graphics Processing Unit (GPU).
However, a parallelized implementation or bounds on the rate of convergence of BA with constraints has not yet been shown in the literature.
Furthermore, to the best of our knowledge, BA has not been shown as a practical method for solving problems in real-time robotics applications.

\subsection{Contributions and Paper Organization}
In this paper, we make the following contributions.
First, we propose a \textbf{Parallel Constrained Bernstein Algorithm (PCBA)} (\S\ref{sec:constrained_BA}).
Second, we prove that PCBA always finds an \emph{optimal solution} (if one exists), and prove bounds on PCBA's time and memory usage (\S\ref{sec:complexity_analysis}).
Third, we evaluate PCBA on a suite of well-known POPs in comparison to the Bounded Sums-of-Squares (BSOS) \cite{lasserre2017bounded} solver and a generic nonlinear solver (MATLAB's \fmincon) (\S\ref{sec:benchmarking_PCBA}).
Fourth, we apply PCBA to RTD to make \textbf{RTD*}, a provably-safe, optimal, and real-time trajectory planning algorithm for mobile robots (\S\ref{sec:hardware_demo}), thereby demonstrating \defemph{dynamic feasibility} and \defemph{liveness}.
The remainder of the paper is organized as follows. 
\S\ref{sec:preliminaries} defines notation for RTD and POPs.
\S\ref{sec:conclusion} draws conclusions.
Appendix \ref{app:test_functions} lists benchmark problems for PCBA.
\section{Preliminaries}\label{sec:preliminaries}

This section introduces notation, RTD, POPs, the Bernstein form, and subdivision.

\subsection{Notation}
\subsubsection{Polynomial Notation}
We follow the notation in \cite{nataraj2011constrained}.
Let $x := (x_1, x_2, \cdots, x_l) \in \R^l$ be real variable of dimension $l \in \N$.
A multi-index $J$ is defined as $J := (j_1, j_2, \cdots, j_l) \in \N^l$
and the corresponding multi-power $x^J$ is defined as $x^J := (x_1^{j_1}, x_2^{j_2}, \cdots, x_l^{j_l}) \in \R^l$.
Given another multi-index $N := (n_1, n_2, \cdots, n_l) \in \N^l$ of the same dimension, an inequality $J \leq N$ should be understood component-wise.
An $l$-variate polynomial $p$ in canonical (monomial) form can be written as
\begin{equation}\label{eq:poly_monomial_basis}
    p(x)  = \sum_{J \leq N} a_J x^J, \quad x \in \R^l,
\end{equation}
with coefficients $a_J \in \R$ and some multi-index $N \in \N^l$.
The space of polynomials of degree $d \in \N$, with variable $x \in \R^l$, is $\R_d[x]$.

\begin{defn}\label{def:degree_and_multi_degree}
We call $N \in \N^l$ the \defemph{multi-degree} of a polynomial $p$; each $i$\ts{th} element of $N$ is the maximum degree of the variable $x_i$ out of all of the monomials of $p$.
We call $d \in \N$ the \defemph{degree} of $p$; $d$ is the maximum sum, over all monomials of $p$, of the powers of the variable $x$.
That is, $d = ||N||_1$, where $||\cdot||_1$ is the sum of the elements of a multi-index.
\end{defn}

\subsubsection{Point and Set Notation}
Let $\x := [\underline{x}_1, \overline{x}_1 ] \times \cdots \times [\underline{x}_l, \overline{x}_l] \subset \R^l$ denote a general $l$-dimensional box in $\R^n$, with $-\infty < \underline{x}_\mu < \overline{x}_\mu < +\infty$ for each $\mu = 1, \cdots, l$.
Let $\uu{} := [0,1]^l \subset \R^l$ be the $l$-dimensional unit box.
Denote by $|\x|$ the maximum width of a box $\x$, i.e. $|\x| = \max \{ \overline{x}_\mu - \underline{x}_\mu:\ \mu = 1,\cdots,l\}$.
For any point $y \in \R^l$, denote by $\|y\|$ the Euclidean norm of $y$,
and denote by $\B_R(y)$ the closed Euclidean ball centered at $y$ with radius $R>0$.

\subsubsection{RTD Notation}
Let $T = [0,\tfin]$ denote the time interval of a single plan (in a single receding-horizon iteration).
Let $X \subset \R^{n_X}$ denote the robot's state space, and $U \subset \R^{n_U}$ denote the robot's control inputs where $n_X, n_U \in \N$.
The robot is described by dynamics $f\hi: T \times X \times U \to \R^{n_X}$, which we call a \defemph{high-fidelity model}.
The parameterized trajectories are described by $f: T \times X \times Q \to \R^{n_X}$, where $Q \subset \R^{n_Q}$ is the space of trajectory parameters.
A point $q \in Q$ parameterizes a trajectory $x\des: T \to X$.
For any $q$, the robot uses a feedback controller $u_q: T\times X \to U$ to track the trajectory parameterized by $q$ (we say it tracks $q$ for short).

\subsection{Polynomial Optimization Problems}
We denote a POP as follows:
\begin{align}
\label{eq:POP}
    \begin{array}{cll}
         \underset{x \in D \subset \R^l}{\min} & p(x) & \\
         \regtext{s.t} & g_i(x) \leq 0 & i = 1,\cdots,\,\Nineq\\
                       & h_j(x) = 0 & j = 1,\cdots,\,\Neq.
    \end{array}\tag{P}
\end{align}
The decision variable is $x \in D \subset \R^n$, where $D$ is a compact, box-shaped domain and $l\in \N$ is the \defemph{dimension} of the program.
The cost function is $p \in \R_d[x]$, and the constraints are $g_i, h_j \in \R_d[x]$ ($\Nineq, \Neq \in \N$).
We assume for convenience that $d$ is the greatest degree amongst the cost and constraint polynomials; we call $d$ the \defemph{degree of the problem}.

\subsection{Reachability-based Trajectory Design}\label{sec:preliminaries:RTD}

Recall that RTD begins by computing an FRS offline for a robot tracking a parameterized set of trajectories.
At runtime, in each receding-horizon planning iteration, the FRS is used to construct a POP as in \eqref{eq:POP}; solving this POP is equivalent to generating a new trajectory plan.
We now briefly describe how this POP is constructed (see \cite{kousik2018bridging} for details).

We define the FRS $F \subset X\times Q$ as the set of all states reachable by the robot when tracking any parameterized trajectory:
\begin{align}\begin{split}
    F = \Big\{(x,q) \in X\times Q~|~&\exists\ t \in T\ \regtext{s.t.}\ x = \hat{x}(t),\ \hat{x}(0) \in X_0\ \regtext{and} \\
        &\dot{\hat{x}}(\tau) = f\hi(\tau,\hat{x}(\tau),u_q(\tau,\hat{x}(\tau)))\,\forall\,\tau \in T\Big\}
\end{split}\end{align}
where $X_0 \subset X$ is the set of valid initial conditions for the robot in any planning iteration.
To implement RTD, we conservatively approximate the FRS using sums-of-squares programming.
In particular, we compute a polynomial $w \in \R_d[q]$ ($d = 10$ or $12$) for which the 0-superlevel set contains the FRS:
\begin{align}\label{eq:w_FRS_polynomial}
    (x,q) \in F \implies w(x,q) \geq 0.
\end{align}
See \cite[Section 3.2]{kousik2018bridging} for details on how to compute $w$.

At runtime, we use $w$ to identify unsafe trajectory plans.
If $\{x_i\}_{i = 1}^n \subset X$ is a collection of points on obstacles, then we solve the following program in each planning iteration:
\begin{align}\label{prog:online_trajopt}
\begin{array}{cl}
     \underset{q\,\in\,Q}{\regtext{argmin}} & p(q) \\
     \regtext{s.t} & w(x_i,q) < 0~\forall~i = 1,\cdots,n,
\end{array}
\end{align}
where $p$ is a user-constructed polynomial cost function (see \eqref{prog:online_trajopt_POP} in \S\ref{sec:hardware_demo} as an example).
Note that, by \eqref{eq:w_FRS_polynomial}, the set of safe trajectory plans is open, so we implement the constraints in \eqref{prog:online_trajopt} as $w(x_i,q) + \epsilon_q \leq 0$, $\epsilon_q \approx 10^{-4}$.
Critically, any feasible solution to \eqref{prog:online_trajopt} is provably dynamically feasible (and collision-free) \cite[Theorem 68]{kousik2018bridging}.
To understand RTD's online planning in more detail, see \cite[Algorithm 2]{kousik2018bridging}.

In this work, instead of using a derivative-based method to solve \eqref{prog:online_trajopt}, we use our proposed PCBA method, which takes advantage of the polynomial structure of $p$ and $w$.
Next, we discuss Bernstein polynomials.

\subsection{Bernstein Form}
A polynomial $p$ in monomial form \eqref{eq:poly_monomial_basis} can be expanded into \emph{Bernstein form} over an arbitrary $l$-dimensional box $\x$ as
\begin{equation}
\label{eq:BF}
    p(x) = \sum_{J \leq N} B_J^{(N)}(\x) \, b_J^{(N)} (\x, x),
\end{equation}
where $b_J^{(N)}(\x, \cdot)$ is the $J$\ts{th} multivariate \emph{Bernstein polynomial} of multi-degree $N$ over $\x$,
and $B_J^{(N)}(\x)$ are the corresponding \emph{Bernstein coefficients} of $p$ over $\x$.
A detailed definition of Bernstein form is available in \cite{hamadneh2018bounding}.
Note that the Bernstein form of a polynomial can be determined quickly  \cite{titi2017fast}, by using a matrix multiplication on a polynomial's monomial coefficients, with the matrix determined by the polynomial degree and dimension.
This matrix can be precomputed, and the conversion from monomial to Bernstein basis only needs to happen once for the proposed method (see Algorithm \ref{alg:pcba} in \S\ref{sec:constrained_BA}).

For convenience, we collect all such Bernstein coefficients in a multi-dimensional array $B(\x) := \big(B_J^{(N)}(\x)\big)_{J \leq N}$, which is called a \emph{patch}.
We denote by $\min B(\x)$ (resp. $\max B(\x)$) the minimum (resp. maximum) element in the patch $B(\x)$.
The range of polynomial $p$ over $\x$ is contained within the interval spanned by the extrema of $B(\x)$, formally stated as the following theorem:

\begin{thm}[{\cite[Lemma~2.2]{nataraj2011constrained}}]
\label{thm:enclosure}
Let $p$ be a polynomial defined as in \eqref{eq:BF} over a box $\x$. Then, the following property holds for a patch $B(\x)$ of Bernstein coefficients
\begin{equation}
    \min B(\x) \leq p(x) \leq \max B(\x), \quad \forall x \in \x.
\end{equation}
\end{thm}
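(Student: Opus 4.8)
The plan is to exploit the two classical structural properties of the multivariate Bernstein basis: non-negativity and the partition-of-unity property. Once these are in hand, the enclosure bound is immediate, because \eqref{eq:BF} expresses $p(x)$ as a \emph{convex combination} of the Bernstein coefficients collected in the patch $B(\x)$, and a convex combination of real scalars can neither exceed their maximum nor fall below their minimum. So the real content is to verify that the weights $b_J^{(N)}(\x,x)$ are nonnegative and sum to one on $\x$.

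First I would recall the explicit form of the basis polynomials $b_J^{(N)}(\x,\cdot)$ from the reference \cite{hamadneh2018bounding}. Over the unit box $\uu$, each $b_J^{(N)}$ is a tensor product of univariate Bernstein polynomials of the form $\binom{n_\mu}{j_\mu}\,t^{j_\mu}(1-t)^{n_\mu-j_\mu}$; over a general box $\x$ it is obtained by the componentwise affine change of variables sending $\x$ to $\uu$. With this form written out, the two needed properties follow by reduction to the univariate case. For non-negativity, note that for $x \in \x$ each univariate factor is a binomial coefficient times nonnegative powers of $(x_\mu - \underline{x}_\mu)$ and $(\overline{x}_\mu - x_\mu)$, both nonnegative on $\x$; hence $b_J^{(N)}(\x,x) \ge 0$. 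For the partition-of-unity property, the binomial theorem gives $\sum_{j_\mu=0}^{n_\mu}\binom{n_\mu}{j_\mu}\,t^{j_\mu}(1-t)^{n_\mu-j_\mu} = (t + (1-t))^{n_\mu} = 1$, and since the multivariate basis is a tensor product, the full sum $\sum_{J\le N} b_J^{(N)}(\x,x)$ factors into a product of such univariate sums and therefore equals $1$ for every $x \in \x$.

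To conclude, I would fix an arbitrary $x \in \x$. Because the weights $b_J^{(N)}(\x,x)$ are nonnegative and sum to one, \eqref{eq:BF} realizes $p(x)$ as a convex combination of the scalars $\{B_J^{(N)}(\x)\}_{J\le N}$, so $\min B(\x) \le p(x) \le \max B(\x)$; as $x$ was arbitrary, this holds for all $x \in \x$, which is the claim.

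I do not anticipate a genuine obstacle, as this is a standard fact about the Bernstein form. The only care needed is bookkeeping for the multivariate, affinely-rescaled basis: checking that both non-negativity and the partition-of-unity property survive the tensor-product construction and the change of variables from $\uu$ to an arbitrary box $\x$. Both reduce cleanly to the univariate identities above, so the tensor structure and the affine map preserve exactly what is required, and no additional estimates are involved.
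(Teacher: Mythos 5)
Your proof is correct and is the standard argument for this classical enclosure property: non-negativity plus partition of unity of the Bernstein basis makes \eqref{eq:BF} a convex combination of the coefficients in $B(\x)$. The paper does not reprove this result --- it cites it directly from \cite[Lemma~2.2]{nataraj2011constrained} --- and your argument is essentially the one given in that reference, so there is nothing further to compare.
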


\noindent This theorem provides a means to obtain enclosure bounds of a multivariate polynomial over a box by transforming the polynomial to Bernstein form.
This range enclosure can be further improved either by degree elevation or by subdivision.
This work uses subdivision, discussed next, to refine the bounds.

\subsection{Subdivision Procedure}
\label{subsec:subdivision}
Consider an arbitrary box $\x \subset \R^l$.
The range enclosure in Theorem \ref{thm:enclosure} is improved by subdividing $\x$ into subboxes and computing the Bernstein patches over these subboxes.
A \defemph{subdivision} in the $r$\ts{th} direction ($1 \leq r \leq l$) is a bisection of $\x$ perpendicular to this direction.
That is, let
\begin{equation}
    \x := [\underline{x}_1, \overline{x}_1] \times \cdots \times [\underline{x}_r, \overline{x}_r] \times \cdots \times [\underline{x}_l, \overline{x}_l]
\end{equation}
be an arbitrary box over which the Bernstein patch $B(\x)$ is already computed.
By subdividing $\x$ in the $r$\ts{th} direction we obtain two subboxes $\x_L$ and $\x_R$, defined as
\begin{equation}
    \begin{split}
        \x_L &= [\underline{x}_1, \overline{x}_1] \times \cdots \times [\underline{x}_r, (\underline{x}_r + \overline{x}_r)/2] \times \cdots \times [\underline{x}_l, \overline{x}_l], \\
        \x_R &= [\underline{x}_1, \overline{x}_1] \times \cdots \times [(\underline{x}_r + \overline{x}_r)/2, \overline{x}_r] \times \cdots \times [\underline{x}_l, \overline{x}_l].
    \end{split}
\end{equation}
Note that we have subdivided $\x$ by halving its width in the $r$\ts{th} direction; we choose $1/2$ as the subdivision parameter in this work, but one can choose a different value in $[0,1]$ (see \cite[Equation (10)]{nataraj2011constrained}).

The new Bernstein patches, $B(\x_L)$ and $B(\x_R)$, can be computed by a finite number of linear transformations \cite[\S2.2]{nataraj2011constrained}:
\begin{equation}
\begin{split}
    B(\x_L) &= M_{r,L} B(\x), \\
    B(\x_R) &= M_{r,R} B(\x).
\end{split}
\end{equation}
where $M_{r,L}$ and $M_{r,R}$ are constant matrices, which can be precomputed, for each $r$ (notice that \cite[Equation (10)]{nataraj2011constrained} obtains $B(\x_L)$ with linear operations on $B(\x)$).
The patches and one iteration of the subdivision procedure are shown in Figure \ref{fig:ba_iteration_example}.

\begin{rem}
To reduce wordiness, we say that we \defemph{subdivide a patch} to mean the subdivision of a single subbox into two subboxes \emph{and} the computation of the corresponding Bernstein patches for the POP cost and constraints.
\end{rem}

By repeatedly applying the subdivision procedure and Theorem \ref{thm:enclosure}, the bounds on the range of polynomial in a subbox can be improved.
In fact, such bounds can be exact in the limiting sense if the subdivision is applied evenly in all directions:
\begin{thm}[{\cite[Theorem~2]{malan1992b}}]
\label{thm:err_bound}
Let $\x^{(n)}$ be a box of maximum width $2^{-n}$ ($n \in \N$) and let $B(\x^{(n)})$ be the corresponding Bernstein patch of a given polynomial $p$, then
\begin{equation}
    \min B(\x^{(n)}) \leq \min_{x \in \x^{(n)}} p(x) \leq \min B(\x^{(n)}) + \zeta \cdot 2^{-2n}
\end{equation}
where $\zeta$ is a non-negative constant that can be given explicitly independent of $n$.
\end{thm}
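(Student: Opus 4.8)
The plan is to take the left inequality for free and concentrate all the work on the right inequality, where the quadratic rate must come from. The left inequality, $\min B(\x^{(n)}) \le \min_{x\in\x^{(n)}} p(x)$, is immediate from the enclosure property of Theorem~\ref{thm:enclosure}, since $\min B(\x^{(n)}) \le p(x)$ for every $x \in \x^{(n)}$ and hence for the minimizer in particular. For the right inequality I would first reduce to the unit box: the affine map $\phi(u) = (\underline{x}_1 + u_1(\overline{x}_1 - \underline{x}_1), \ldots)$ sends $\uu$ onto $\x^{(n)}$, the reparametrized polynomial $\tilde p = p \circ \phi$ has the same multi-degree $N$ and the same Bernstein patch, and $\min_{x\in\x^{(n)}}p = \min_{u\in\uu}\tilde p$. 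This reduces everything to bounding the gap $\min_{\uu}\tilde p - \min B(\uu)$ while tracking how the box width $h := |\x^{(n)}| \le 2^{-n}$ enters through $\tilde p$.

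The key structural idea is that the Bernstein form reproduces affine polynomials exactly (\emph{linear precision}): for an affine $a$, the Bernstein coefficients equal the values of $a$ at the grid nodes, which include the box vertices, so $\min B(a) = \min_{\x^{(n)}} a$. I would therefore Taylor-expand $p$ about a minimizer $x^\ast \in \x^{(n)}$, writing $p = a + \rho$ with $a(x) = p(x^\ast) + \nabla p(x^\ast)^\top(x - x^\ast)$ the affine part and $\rho$ the polynomial remainder collecting all terms of order $\ge 2$. Since the Bernstein coefficients are linear in the polynomial, $B(p) = B(a) + B(\rho)$ and hence $\min B(p) \ge \min B(a) + \min B(\rho)$. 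For the affine part, linear precision together with $p(x) \ge p(x^\ast)$ on the box give $\min B(a) = \min_{\x^{(n)}} a \ge p(x^\ast) - \max_{\x^{(n)}}|\rho|$. For the remainder, after reparametrizing to $\uu$, every monomial of the pulled-back remainder of total degree $k \ge 2$ carries a factor that is a product of $k$ box widths, each $\le h \le 1$, so all of its monomial coefficients are bounded by a constant times $h^2$; since the Bernstein coefficients are fixed linear combinations of the monomial coefficients (with weights depending only on $N$), this yields $\min B(\rho) \ge -\zeta_1 h^2$ and likewise $\max_{\x^{(n)}}|\rho| \le \zeta_0 h^2$, where $\zeta_0, \zeta_1$ depend only on $l$, $N$, and the derivatives of $p$, all bounded over the compact domain $D$ and independent of $n$.

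Combining these gives $\min B(\x^{(n)}) \ge p(x^\ast) - (\zeta_0 + \zeta_1) h^2 = \min_{x \in \x^{(n)}} p(x) - \zeta\, 2^{-2n}$ with $\zeta := \zeta_0 + \zeta_1$, which is the right inequality. I expect the main obstacle to be the remainder estimate: one must argue that the overestimation of the Bernstein enclosure of $\rho$ is genuinely $O(h^2)$ and not merely $O(h)$, which is exactly where linear precision is indispensable — the absence of any first-order contribution to the error is what upgrades the rate from linear to quadratic. Making the constant $\zeta$ explicit and manifestly independent of $n$ then amounts to bounding $\sup_D \norm{\nabla^2 p}$ (and the finitely many higher derivatives of the fixed polynomial $p$) together with the degree-$N$-dependent norms of the monomial-to-Bernstein transformation matrices.
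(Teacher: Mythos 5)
Your argument is correct, and it is worth noting at the outset that the paper itself offers no proof of this statement: Theorem~\ref{thm:err_bound} is imported verbatim from \cite[Theorem~2]{malan1992b}, so the only meaningful comparison is against the classical argument in that literature. The standard route bounds the deviation of \emph{each} Bernstein coefficient $B_J$ from the value of $p$ at its associated grid node $x_J = \underline{x} + (J/N)\cdot(\overline{x}-\underline{x})$ by a constant times the squared box width, using second differences of the coefficient array (which vanish exactly for affine functions); since $\min_J p(x_J) \ge \min_{\x^{(n)}} p$, the $O(2^{-2n})$ gap follows. Your proof reorganizes the same underlying mechanism --- exactness of the Bernstein form on affine polynomials --- into a global decomposition $p = a + \rho$ at a minimizer, with linear precision handling $a$ and a coefficient-scaling argument under the affine pullback handling $\rho$. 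The two are essentially equivalent in content, but yours is self-contained and makes transparent exactly where the upgrade from $O(h)$ to $O(h^2)$ comes from (no first-order contribution survives the enclosure error), while the grid-node version buys a stronger \emph{pointwise} statement $|B_J - p(x_J)| \le \zeta h^2$ for every $J$, which is what the reference actually proves and which is occasionally needed elsewhere. Two small points you handle correctly but that deserve emphasis: the constant must be uniform over the location of the subbox inside the original domain (your appeal to boundedness of the finitely many derivatives of $p$ over the compact $D$ settles this, and is also what the paper's subsequent remark about $\zeta$ being unchanged under $p \mapsto -p$ relies on); and the Taylor expansion need not be taken at the minimizer --- expanding at any point of the box, say its center, gives $\min_{\x^{(n)}} a \ge \min_{\x^{(n)}} p - \max_{\x^{(n)}}|\rho|$ by the same one-line computation, which slightly streamlines the bookkeeping.
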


Notice from the proof of Theorem \ref{thm:err_bound} that changing the sign of $p$ does not change the value of $\zeta$.
By substituting $p$ with $-p$ in Theorem \ref{thm:err_bound}, one can easily show a similar result holds for the maximum of $p$ over $\x^{(n)}$:
\begin{cor}
\label{cor:err_bound}
    Let $\x^{(n)}$ and $B(\x^{(n)})$ be as in Theorem \ref{thm:err_bound}.
    Then,
    \begin{equation}
        \max B(\x^{(n)}) - \zeta \cdot 2^{-2n} \leq \max_{x \in \x^{(n)}} p(x) \leq \max B(\x^{(n)}),
    \end{equation}
    where $\zeta$ is the same non-negative constant as in Theorem \ref{thm:err_bound}.
\end{cor}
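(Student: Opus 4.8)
The plan is to deduce the corollary directly from Theorem \ref{thm:err_bound} by applying it to the negated polynomial $-p$ and then reversing the resulting inequalities. Concretely, I would set $q := -p$ and invoke Theorem \ref{thm:err_bound} for $q$ over the box $\x^{(n)}$, obtaining the chain
\[
\min B_q(\x^{(n)}) \leq \min_{x \in \x^{(n)}} q(x) \leq \min B_q(\x^{(n)}) + \zeta_q \cdot 2^{-2n}.
\]
The task then reduces to translating each term of this chain back into quantities involving $p$.

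To do so, I would establish three substitution identities. First, since the monomial-to-Bernstein conversion is a linear map on the coefficient array (the matrix multiplication recalled in the discussion of the Bernstein form), the patch of $-p$ is the entrywise negation of the patch of $p$; hence negation swaps the extrema of the patch, giving $\min B_q(\x^{(n)}) = -\max B(\x^{(n)})$. Second, pointwise negation of the objective yields $\min_{x} q(x) = \min_x\, (-p(x)) = -\max_x\, p(x)$. Third, by the observation stated immediately before the corollary, changing the sign of $p$ leaves the constant $\zeta$ unchanged, so $\zeta_q = \zeta$. Substituting these into the displayed chain produces
\[
-\max B(\x^{(n)}) \leq -\max_{x \in \x^{(n)}} p(x) \leq -\max B(\x^{(n)}) + \zeta \cdot 2^{-2n},
\]
and multiplying every term by $-1$ reverses the inequalities to recover exactly the claimed bounds.

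The only step requiring care is the bookkeeping of the sign reversals, in particular verifying that negating $p$ negates the entire Bernstein patch and thereby interchanges $\min B$ with $-\max B$, and then correctly reversing both inequalities at the final step. The genuinely nontrivial ingredient, the sign-invariance of $\zeta$, is already supplied by the remark preceding the corollary and need only be cited. I therefore do not expect any substantive obstacle beyond stating these elementary manipulations precisely; the argument is essentially a direct corollary rather than a new proof.
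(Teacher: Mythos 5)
Your proposal is correct and follows exactly the route the paper itself takes: substituting $-p$ into Theorem \ref{thm:err_bound}, using the sign-invariance of $\zeta$ noted just before the corollary, and reversing the inequalities. The sign bookkeeping (entrywise negation of the patch swapping $\min$ and $-\max$) checks out, so nothing further is needed.
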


Theorems \ref{thm:err_bound} and Corollary \ref{cor:err_bound} provide shrinking bounds for values of a polynomial over subboxes as the subdivision process continues.
By comparing the bounds over all subboxes, one can argue that the minimizers of a polynomial may appear in only a subset of the subboxes.
This idea underlies the \emph{Bernstein Algorithm} for solving POPs \cite{nataraj2007new,nataraj2011constrained}, discussed next.
\section{Parallel Constrained Bernstein Algorithm}\label{sec:constrained_BA}

\begin{figure}[t]
\centering
\includegraphics[width=\columnwidth]{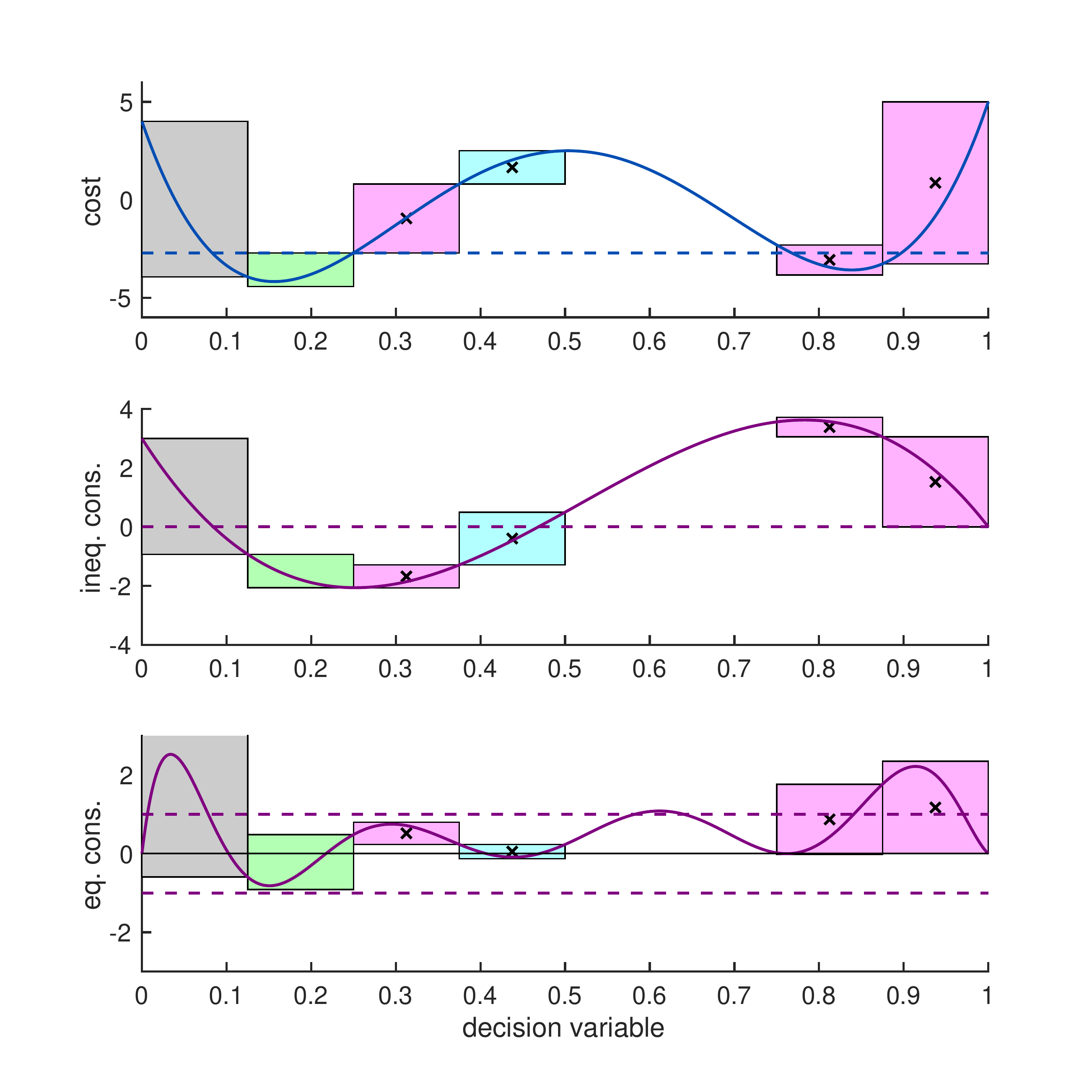}
\caption{The 3\ts{rd} iteration of PCBA (Algorithm \ref{alg:pcba}) on a one-dimensional polynomial cost (top) with one inequality constraint (middle) and one equality constraint (bottom).
The rectangles represent Bernstein patches (as in \S\ref{subsec:subdivision}), where the horizontal extent of each patch corresponds to an interval of the decision variable over which Bernstein coefficients are computed.
The top and bottom of each patch represent the maximum and minimum Bernstein coefficients, which bound the cost and constraint polynomials on the corresponding interval.
As per Definition \ref{def:feasible}, the green patch is feasible, the pink patches are infeasible, and the grey patches are undecided; the purple dashed lines show the inequality constraint cut-off (zero) and the equality constraint tolerance $\eeq = 1$ (note that $\eeq$ is chosen to be this large only for illustration purposes).
Per Definition \ref{def:suboptimal}, the light blue patch is suboptimal; the blue dashed line in the top plot is the current solution estimate (Definition \ref{def:current_best_soln}).
The infeasible and suboptimal patches are each marked with $\times$ for elimination (Algorithm \ref{alg:elim}), since they cannot contain the global optimum (Theorem \ref{thm:cut-off}); the feasible and undecided patches are kept for the next iteration.
}
\label{fig:ba_iteration_example}
\end{figure}

This section proposes the Parallel Constrained Bernstein Algorithm (PCBA, Algorithm \ref{alg:pcba}) for solving a general POP.
We extend the approach in \cite{nataraj2011constrained}.
This approach utilizes Bernstein form to obtain upper and lower bounds of both objective and constraint polynomials (Theorem \ref{thm:enclosure}), iteratively improves such bounds using subdivision (Theorem \ref{thm:err_bound} and Corollary \ref{cor:err_bound}), and removes patches that are cannot contain a solution (Theorem \ref{thm:cut-off}).
We discuss the algorithm, the list used to store patches, tolerances and stopping criteria, subdivision, a cut-off test for eliminating patches, and the advantages and disadvantages of PCBA.
The next section, \S\ref{sec:complexity_analysis}, proves PCBA finds globally optimal solutions to POPs, up to user-specified tolerances.

\begin{algorithm}[H]
\textbf{Inputs:}
    Polynomials $p$, $\{g_i\}_{i=1}^\Nineq$, $\{h_j\}_{j=1}^\Neq$ as in \eqref{eq:POP}, of dimension $l$; optimality tolerance $\epsilon > 0$, step tolerance $\delta > 0$, and equality constraint tolerance $\eeq > 0$; and maximum number of patches $M \in \N$ and of iterations $N \in \N$.

\textbf{Outputs:}
    Estimate $p^* \in \R$ of optimal solution, and subbox $\x^* \subset \uu$ containing optimal solution.

\textbf{Algorithm:}
\begin{algorithmic}[1]
\State Initialize patches of $p$, $g_i$, and $h_j$ over $l$-dimensional initial domain box $\uu$ as in \cite{titi2017fast} \label{alg:pcba:init:patches}

    $[B_p(\uu), B_{gi}(\uu), B_{hj}(\uu)] \gets$ InitPatches$\left(p, g_i, h_j\right)$
        
\State Initialize lists of undecided patches and patch extrema on the GPU \label{alg:pcba:init:lists}
    
        $\LL \gets \left\{\left(\uu, B_p(\uu), B_{gi}(\uu), B_{hj}(\uu)\right)\right\}$
        
        $\LL_\text{bounds} \gets \{\}$
        
\State Initialize iteration count and subdivision direction \label{alg:pcba:init:counters}
        
        $n \gets 1$, $r \gets 1$

\State Test for sufficient memory (iteration begins here) \label{alg:pcba:mem_test}
    
    \textbf{if} $2\times\regtext{length}(\LL)>M$ \textbf{then} go to \ref{alg:pcba:return}
    
    \textbf{else} continue
    
    \textbf{end if}
    
\State \blue{(Parallel)} Subdivide each patch in $\LL$ in the $r$\ts{th} direction to create two new patches using Algorithm \ref{alg:subdivision} \label{alg:pcba:subdiv} 
    
    $\LL \gets$ Subdivide($\LL,r$)
    
\State \blue{(Parallel)} Find bounds of $p$, $g_i$, and $h_j$ on each new patch using Algorithm \ref{alg:find_bounds} \label{alg:pcba:find_bounds}
         
    $\LLbounds \gets$ FindBounds$(\LL)$
           
\State  Estimate upper bound $\pu$ of the global optimum as the least upper bound of all feasible patches, and determine which patches to eliminate using Algorithm \ref{alg:cut-off} \label{alg:pcba:cut-off}
           
    $[\pu,\pl,\LLsave,\LLelim] \gets$ CutOffTest$(\LL_\text{bounds})$;
           
\State Test if problem is feasible \label{alg:pcba:check_infeasible}

    \textbf{if} $\regtext{length}(\LLsave)=0$ \textbf{then} go to \ref{alg:pcba:return_infeasible}
    
\State Test stopping criteria for all $\left(\x,B_p(\x),B_{gi}(\x),B_{hj}(\x)\right) \in \LL$ \label{alg:pcba:stopping_criteria}

    \textbf{if} $\pu - \pl \leq \epsilon$
    
    \hspace*{0.25cm}\textbf{and} $|\x| \leq\delta$
    
    \hspace*{0.25cm}\textbf{and} $-\epsilon_{eq}\leq\min B_{hj}(\x)\leq\max B_{hj}(\x)\leq\epsilon_{eq}$
    
    \textbf{then} go to \ref{alg:pcba:return}
    
    \textbf{end if}
    
\State \blue{(Parallel)} Eliminate infeasible, suboptimal patches using Algorithm \ref{alg:elim} \label{alg:pcba:eliminate}

    $\LL \gets$ Eliminate$(\LL,\LLsave,\LLelim)$;
    
\State Prepare for next iteration \label{alg:pcba:prep_next_iter}
    
    $r\gets(\regtext{mod}(r+1,l))+1$
    
    \textbf{if} $r = 1$ \textbf{then} $n\gets n + 1$
    
    \textbf{end if}
    
    \textbf{if} $n = N$ \textbf{then} go to Step \ref{alg:pcba:return} 
    
    \textbf{else} go to Step \ref{alg:pcba:mem_test}
    
    \textbf{end if}
    
\State Return current best approximate solution  \label{alg:pcba:return}

    $p^* \gets \pu$
    
    $\x^* \gets \x$ for which $\max B_p(\x) = \pu$
    
    \Return $p^*$, $\x^*$
    
\State No solution found (problem infeasible) \label{alg:pcba:return_infeasible}
\end{algorithmic}
    \caption{Parallel Constrained Bernstein Algorithm (PCBA)}
    \label{alg:pcba}
\end{algorithm}

Before proceeding, we make an assumption for notational convenience, and to make the initial computation of Bernstein patches easier.
\begin{assum}
Without loss of generality, the domain of the decision variable is the unit box (i.e., $D = \uu$), since any nonempty box in $\R^l$ can be mapped affinely onto $\uu$ \cite{titi2017fast}.
\end{assum}

\subsection{Algorithm Summary}

We now summarize PCBA, implemented in Algorithm \ref{alg:pcba}.
The algorithm is initialized by computing the Bernstein patches of the cost and constraints on the domain $\uu$ (Line \ref{alg:pcba:init:patches}).
Subsequently, PCBA subdivides each patch as in Remark \ref{rem:subdiv} (Line \ref{alg:pcba:subdiv} and Algorithm \ref{alg:subdivision}).
Then, PCBA finds the upper and lower bounds of each new patch (Line \ref{alg:pcba:find_bounds} and Algorithm \ref{alg:find_bounds}).
These bounds are used to determine which patches are feasible, infeasible, and undecided as in Definition \ref{def:feasible} (Line \ref{alg:pcba:cut-off}; see Algorithm \ref{alg:cut-off} and Theorem \ref{thm:cut-off}).
Algorithm \ref{alg:cut-off} also determines the current solution estimate (the smallest upper bound over all feasible patches), and marks any patches that are suboptimal as in Definition \ref{def:suboptimal}.
If every patch is infeasible (Line \ref{alg:pcba:check_infeasible}), PCBA returns that the problem is infeasible (Line \ref{alg:pcba:return_infeasible}); otherwise, PCBA checks if the current solution estimate meets user-specified tolerances (Line \ref{alg:pcba:stopping_criteria}).
If the tolerances are met, PCBA returns the solution estimate (Line \ref{alg:pcba:return}).
Otherwise, PCBA eliminates all infeasible and suboptimal patches (Line \ref{alg:pcba:eliminate} and Algorithm \ref{alg:elim}), then moves to the next iteration (Line \ref{alg:pcba:prep_next_iter}).
Note, algorithms \ref{alg:subdivision}, \ref{alg:find_bounds}, and \ref{alg:elim} are parallelized.

\subsection{Items and The List}\label{subsec:list}
Denote an \defemph{item} as the tuple $\ell = (\x, B_p(\x), B_{gi}(\x)), B_{hj}(\x))$, where $B_{gi}(\x)$ (resp. $B_{hj}(\x)$) is shorthand for the set of patches $\{B_{g_i}(\x)\}_{i = 1}^{\Nineq}$ (resp. $\{B_{h_j}(\x)\}_{j = 1}^{\Neq}$).
We use the following notation for items.
If $\ell = \itmx$, then $\ell_1 = \x$, $\ell_2 = B_p(\x)$, $\ell_3 = B_{gi}(\x)$, and $\ell_4 = B_{hj}(\x)$.

We denote the \defemph{list} $\LL = \{\ell_\mu: \mu = 1,\cdots,N_{\LL}\}$, $N_{\LL} \in \N$, indexed by $\mu \in \N$.
PCBA adds and removes items from $\LL$ by assessing the feasibility and optimality of each item.

\subsection{Tolerances and Stopping Criteria}\label{subsec:tolerances}

Recall that, by Theorem \ref{thm:enclosure}, Bernstein patches provide upper and lower bounds for polynomials over a box.
From Theorem \ref{thm:err_bound} and Corollary \ref{cor:err_bound}, as we subdivide $\uu$ into smaller subboxes, the bounds of the Bernstein patches on each subbox more closely approximate the actual bounds of the polynomial.
While the bounds will converge to the value of the polynomial in the limit as the maximum width of subboxes goes to zero, to ensure the algorithm terminates, we must set tolerances on optimality and equality constraint satisfaction (the equality constraints $h_j(x) = 0$ may not be satisfied for \emph{all} points in certain boxes).
During optimization one is also usually interested in finding the optimizer of the cost function up to some resolution. 
In our case, this resolution corresponds to the maximum allowable subbox width which we refer to as the \defemph{step tolerance}.

\begin{defn}\label{def:tolerances}
We denote the \defemph{optimality tolerance} as $\epsilon > 0$, the \defemph{equality constraint tolerance} as $\eeq > 0$, and the \defemph{step tolerance} as $\delta > 0$.
We terminate Algorithm \ref{alg:pcba} either when $\LL$ is empty (the problem is infeasible) or when there exists an item $(\x, B_p(\x), B_{gi}(\x)), B_{hj}(\x)) \in \LL$ that satisfies all of the following conditions:
\begin{enumerate}[label=(\alph*),ref=(\alph*)]
    \item $|\x| \leq \delta$,
    \item $\max B_{gi}(\x) \leq 0$ for all $i = 1, \cdots, \Nineq$,
    \item $-\eeq \leq \min B_{hj}(\x) \leq 0 \leq \max B_{hj}(\x) \leq \eeq$ for all $j = 1, \cdots, \Neq$, and
    \item $\max B_p(\x) - \min B_p(\y) \leq \epsilon$ for all $\y \in \LL$.
\end{enumerate}
\end{defn}
\noindent We discuss feasibility in more detail in section \ref{subsec:cut-off_test}
Note that, to implement the step tolerance $\delta$, since we subdivide by halving the width of each subbox, we need only ensure that sufficiently many iterations have passed.

Note that we do not set a tolerance on inequality constraints, since these are ``one-sided'' constraints; for any inequality constraint $g_i$ and subbox $\x$, we satisfy the constraint if $\max B_{gi}(\x) \leq 0$ (see Definition \ref{def:feasible} and Theorem \ref{thm:roc:constrained}).

\subsection{Subdivision}

Recall that subdivision is presented in \S\ref{subsec:subdivision}.
We implement subdivision with Algorithm \ref{alg:subdivision}.
Since the subdivision of one Bernstein patch is computationally independent of another, each subdivision task is assigned to an individual GPU thread, making Algorithm \ref{alg:subdivision} parallel.

Note that the subdivision of Bernstein patches can be done in any direction, leading to the question of how to select the direction in practice.
Example rules are available in the literature, such as maximum width \cite[\S 3]{ratz1995selection}, derivative-based \cite[\S 3]{zettler1998robustness}, or a combination of the two \cite[\S 3]{ratz1995selection}.
In the context of constrained optimization, the maximum width rule is usually favored over derivative-based rules for two reasons:
first, computing the partial derivatives of all constraint polynomials can introduce significant computational burden, especially when the number of constraints is large (see \S\ref{subsec:example:constraints});
second, the precision of Bernstein patches as bounds to the polynomials depends on the \emph{maximum} width of each subbox (Theorem \ref{thm:err_bound} and Corollary \ref{cor:err_bound}), so it is beneficial to subdivide along the direction of maximum width for better convergence results.

In each $n$\ts{th} iteration of PCBA, we subdivide in each direction $r$, in the order $1,2,\cdots,l$.
We halve the width of each subbox each time we subdivide, leading to the following remark.
\begin{rem}
\label{rem:subdiv}
    In the $n$\ts{th} iteration, the maximum width of any subbox in $\LL$ is $2^{-n}$.
\end{rem}

\begin{algorithm}[ht]
\begin{algorithmic}[1]
\State $K \gets \regtext{length}(\LL)$
\ParFor{$k\in\{1,\ldots,K\}$}
    \State $\left(\x, B_p(\x), B_{gi}(\x), B_{hj}(\x)\right)\gets\LL[k]$;
    
    \State Subdivide $\x$ along the $r$\ts{th} direction into $\x_L$ and $\x_R$
    
    \State Compute patches $B_p(\x_L)$ and $B_p(\x_R)$
    
    \State Compute patches $B_{gi}(\x_L)$ and $B_{gi}(\x_R)$
    
    \State Compute patches $B_{hj}(\x_L)$ and $B_{hj}(\x_R)$
    
    \State $\LL[k] \gets (\x_L, B_p(\x_L), B_{gi}(\x_L), B_{hj}(\x_L))$
    
    \State $\LL[k + K] \gets (\x_R, B_p(\x_R), B_{gi}(\x_R), B_{hj}(\x_R))$
\EndParFor
\State \Return $\LL$
\end{algorithmic}
    \caption{$\LL =$ Subdivision$(\LL,r)$ \blue{(Parallel)}}
    \label{alg:subdivision}
\end{algorithm}

\subsection{Cut-Off Test}\label{subsec:cut-off_test}
Subdivision would normally occur for every patch in every iteration, leading to exponential memory usage ($2^n$ patches in iteration $n$).
However, by using a cut-off test, some patches can be deleted, reducing both the time and memory usage of PCBA (see \S\ref{sec:complexity_analysis} for complexity analysis).
To decide which patches are to be eliminated, we require the following definitions.

\begin{defn}\label{def:feasible}
An item $\itmx \in \LL$ is \defemph{feasible} if both of the following hold:
\begin{enumerate}[label=(\alph*),ref=(\alph*)]
    \item $\max B_{gi}(\x) \leq 0$ for all $i = 1, \cdots, \Nineq$, and
    \item $-\eeq \leq \min B_{hj}(\x) \leq 0 \leq \max B_{hj}(\x) \leq \eeq$ for all $j = 1,\cdots,\Neq$.
\end{enumerate}
An item is \defemph{infeasible} if any of the following hold:
\begin{enumerate}[label=(\alph*),ref=(\alph*)]
\setcounter{enumi}{2}
    \item $\min B_{gi}(\x) > 0$ for at least one $i = 1, \cdots, \Nineq$, or
    \item $\min B_{hj}(\x) > 0$ for at least one $j = 1,\cdots,\Neq$, or
    \item $\max B_{hj}(\x) < 0$ for at least one $j = 1,\cdots,\Neq$.
\end{enumerate}
An item is \defemph{undecided} if it is neither feasible nor infeasible.
\end{defn}

\noindent Notice in particular, a feasible item must not be infeasible.

\begin{defn}\label{def:current_best_soln}
The \defemph{solution estimate} $\pu$ is the smallest upper bound of the cost over all feasible items in $\LL$:
\begin{align}
    \pu = \min\Big\{ \max \{\ell_2~|~\ell \in \LL,~\ell~\regtext{feasible}\}\Big\},
\end{align}
where $\ell_2 = B_p(\x)$ if $\ell = \itmx$.
\end{defn}

\begin{defn}\label{def:suboptimal}
An item $\itmx \in \LL$ is \defemph{suboptimal} if
\begin{align}
    \min B_p(\x) > \pu.
\end{align}
\end{defn}

\noindent
Note that Definitions \ref{def:current_best_soln} and \ref{def:suboptimal} are dependent on $\LL$; that is, for the purposes of PCBA, optimality is defined in terms of the elements of $\LL$.
We show in Corollary \ref{cor:cut-off} below how this notion of optimality coincides with optimality of the POP itself.

Feasible, infeasible, undecided, and suboptimal patches are illustrated in Figure \ref{fig:ba_iteration_example}.
Any item that is infeasible or suboptimal can be eliminated from $\LL$, because the corresponding subboxes cannot contain the solution to the POP (formalized in the following Theorem).
We call checking for infeasible and suboptimal items the \defemph{cut-off test}.

\begin{thm}[Cut-Off Test]
\label{thm:cut-off}
Let $\itmx \in \LL$ be an item.
If the item is infeasible (as in Definition \ref{def:feasible}) or suboptimal (as in Definition \ref{def:suboptimal}), then $\x$ does not contain a global minimizer of \eqref{eq:POP}.
Such item can be removed from the list $\LL$.
\end{thm}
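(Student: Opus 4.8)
The plan is to dispatch the two disjuncts of the hypothesis separately, in each case using the enclosure property of Theorem \ref{thm:enclosure} to convert the Bernstein-coefficient conditions of Definitions \ref{def:feasible} and \ref{def:suboptimal} into pointwise statements about the cost and constraint polynomials over the box $\x$. For the infeasible case, suppose $\itmx$ satisfies one of conditions (c)--(e) of Definition \ref{def:feasible}. If $\min B_{gi}(\x) > 0$ for some $i$, then Theorem \ref{thm:enclosure} applied to $g_i$ gives $g_i(x) \geq \min B_{gi}(\x) > 0$ for every $x \in \x$, so the constraint $g_i(x) \leq 0$ fails everywhere on $\x$; if $\min B_{hj}(\x) > 0$ (resp.\ $\max B_{hj}(\x) < 0$) for some $j$, the same enclosure forces $h_j(x) > 0$ (resp.\ $h_j(x) < 0$), hence $h_j(x) \neq 0$, for all $x \in \x$. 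In each case $\x$ contains no point feasible for \eqref{eq:POP}, so it contains no global minimizer and may be deleted.

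Next I would treat the suboptimal case, where $\min B_p(\x) > \pu$. Because the quantity $\pu$ appears in the hypothesis, Definition \ref{def:current_best_soln} guarantees that the set of feasible items is nonempty and that some feasible item attains the minimum, i.e.\ there is a feasible box $\x^*$ with $\max B_p(\x^*) = \pu$. Applying Theorem \ref{thm:enclosure} to the cost on both boxes yields, for all $x \in \x$ and all $x^* \in \x^*$, the chain $p(x^*) \leq \max B_p(\x^*) = \pu < \min B_p(\x) \leq p(x)$. Thus every point of $\x$ is strictly more expensive than every point of the feasible witness box $\x^*$; since $\x^*$ supplies a feasible point of cost at most $\pu$, no point of $\x$ can attain the global minimum, and the item may again be removed.

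The main obstacle lies in the equality-constraint relaxation built into Definition \ref{def:feasible}(b). Feasibility of the witness item $\x^*$ only certifies that the Bernstein enclosure of each $h_j$ brackets zero within $\pm\eeq$; the enclosure of Theorem \ref{thm:enclosure} is an outer bound, so the true range of $h_j$ on $\x^*$ may sit strictly inside $[\min B_{hj}(\x^*),\max B_{hj}(\x^*)]$ and never actually vanish. Consequently the witness box need not contain an exactly feasible point of \eqref{eq:POP}, and the clean contradiction in the suboptimal case requires either reading ``feasible'' in the $\eeq$-relaxed sense or separately certifying a genuine feasible point in $\x^*$. I would resolve this by phrasing the guarantee relative to the $\eeq$-relaxed feasible set, consistent with the stopping criteria of Definition \ref{def:tolerances} and the convergence analysis deferred to Theorem \ref{thm:roc:constrained}, so that $\pu$ is a valid upper bound on the relaxed optimal value and the elimination step is sound.
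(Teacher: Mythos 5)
Your proof follows essentially the same route as the paper's: the infeasible case is dispatched pointwise via Theorem \ref{thm:enclosure}, and the suboptimal case uses a feasible witness box attaining $\pu$ to build the chain $p(x) \geq \min B_p(\x) > \pu = \max B_p(\y) \geq p(y)$. The equality-constraint subtlety you flag is real --- the paper's own proof only claims the witness point is feasible ``up to the tolerance $\eeq$'' even though the theorem statement speaks of exact global minimizers of \eqref{eq:POP} --- and your resolution (reading the guarantee relative to the $\eeq$-relaxed feasible set) is consistent with how the paper actually uses the result in Definition \ref{def:tolerances} and Theorem \ref{thm:roc:constrained}.
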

\begin{proof}
Let $\itmx$ be an item in $\LL$. We only need to show:
\begin{enumerate}[label=(\alph*),ref=(\alph*)]
    \item if $\itmx$ is feasible, then all points in $\x$ are feasible (up to the tolerance $\eeq$), and \label{cut-off:cond:1}
    \item if $\itmx$ is infeasible, then all points in $\x$ are infeasible (up to the tolerance $\eeq$), and \label{cut-off:cond:2}
    \item if $\itmx$ is suboptimal, then all points in $\x$ are not optimal. \label{cut-off:cond:3}
\end{enumerate}
Note that \ref{cut-off:cond:1} and \ref{cut-off:cond:2} follows directly from Theorem \ref{thm:enclosure}.
To prove \ref{cut-off:cond:3}, let $\y \subset \uu$ be a subbox on which the solution estimate $\pu$ is achieved, that is, $\itmy$ is feasible and
\begin{align}
    \max B_p(\y) = \pu.
\end{align}
Let $y \in \y$ be arbitrary, then it follows from Theorem \ref{thm:enclosure} and the definition of suboptimality that
\begin{align}
    p(x) \geq \min B_p(\x) > \max B_p(\y) \geq p(y)
\end{align}
for all $x \in \x$. Since such point $y$ is necessarily feasible (obtained from condition \ref{cut-off:cond:2}), $x$ cannot be global minimum to the POP.
\end{proof}

\begin{cor}
\label{cor:cut-off}
Suppose there exists a (feasible) global minimizer $x^*$ of the POP \eqref{eq:POP}.
Then, while executing Algorithm \ref{alg:pcba}, there always exists an item $\itmx \in \LL$ such that $x^* \in \x$.
\end{cor}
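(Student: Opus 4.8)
The plan is to argue by induction on the iteration count $n$ of Algorithm \ref{alg:pcba}, proving the invariant that at the start of every iteration the list $\LL$ contains at least one item whose subbox encloses $x^*$. The only operations that alter the subboxes stored in $\LL$ are the subdivision step (Line \ref{alg:pcba:subdiv}, via Algorithm \ref{alg:subdivision}) and the elimination step (Line \ref{alg:pcba:eliminate}, via Algorithm \ref{alg:elim}), so I would only need to check that neither can destroy this invariant.

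For the base case, $\LL$ is initialized (Line \ref{alg:pcba:init:lists}) with the single item over the full domain $\uu$; since $x^*$ is a feasible point it lies in $D = \uu$, so the invariant holds at $n = 1$.

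For the inductive step, I would assume an item $\itmx \in \LL$ satisfies $x^* \in \x$ and follow this subbox through one iteration. First, when $\x$ is bisected into $\x_L$ and $\x_R$ along the current direction $r$, the identity $\x = \x_L \cup \x_R$ forces $x^*$ into at least one half, say $\x'$; hence after subdivision $\LL$ still holds an item over $\x'$ with $x^* \in \x'$. Second, I would show $\x'$ survives the cut-off test. Since $x^*$ satisfies $g_i(x^*) \le 0$ and $h_j(x^*) = 0$, the enclosure bounds of Theorem \ref{thm:enclosure} over $\x'$ give $\min B_{gi}(\x') \le g_i(x^*) \le 0$, $\min B_{hj}(\x') \le 0$, and $\max B_{hj}(\x') \ge 0$, so none of the infeasibility conditions (c)--(e) of Definition \ref{def:feasible} can fire, and the item over $\x'$ is therefore not infeasible. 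Because $x^* \in \x'$ is moreover a global minimizer, the contrapositive of Theorem \ref{thm:cut-off} rules out suboptimality as well. As Algorithm \ref{alg:elim} deletes only infeasible and suboptimal items, the item over $\x'$ is retained, closing the induction.

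I expect the bulk of the argument to be routine bookkeeping, with the one point needing care being the verification that the minimizer's box is never eliminated---which I would split into the two independent checks above. The non-infeasibility check is a direct application of the enclosure property of Theorem \ref{thm:enclosure} to the feasible point $x^*$, and the non-suboptimality check is exactly the contrapositive of part (c) of Theorem \ref{thm:cut-off}, so no new estimate beyond Theorems \ref{thm:enclosure} and \ref{thm:cut-off} should be required. The main thing to state carefully is that ``undecided'' items are never removed by the cut-off test, so it genuinely suffices to exclude the infeasible and suboptimal cases for the box containing $x^*$.
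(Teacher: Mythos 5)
Your proof is correct and follows essentially the same route as the paper, whose entire argument is the single observation that the corollary is the contrapositive of Theorem \ref{thm:cut-off}; your induction simply makes explicit the bookkeeping (initialization over $\uu$, coverage under bisection, and the fact that Algorithm \ref{alg:elim} removes only infeasible or suboptimal items) that the paper leaves implicit. The two feasibility checks you derive from Theorem \ref{thm:enclosure} are already subsumed by invoking the contrapositive of Theorem \ref{thm:cut-off}, so they are harmless redundancy rather than new content.
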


\begin{proof}
This result is the contrapositive of Theorem \ref{thm:cut-off}.
\end{proof}

\begin{algorithm}[t]
\begin{algorithmic}[1]
\State $K \gets \regtext{length}(\LL)$
\ParFor{$k \in \{1,\ldots,K\}$}
    \State $\left(\x, B_p(\x), B_{gi}(\x), B_{hj}(\x)\right)\gets\LL[k]$
    
    \State Find $\min B_p(\x)$ and $\max B_p(\x)$ by parallel reduction
    
    \State Find $\min B_{gi}(\x)$ and $\max B_{gi}(\x)$ similarly
    
    \State Find $\min B_{hj}(\x)$ and $\max B_{hj}(\x)$ similarly
    
    \State $\LL_\text{bounds}[k] \gets 
        \left(\x,\begin{array}{ll}
        \{\min B_p(\x),&\max B_p(\x)\}  \\
        \{\min B_{gi}(\x),&\max B_{gi}(\x)\}  \\
        \{\min B_{hj}(\x),&\max B_{hj}(\x)\}  
        \end{array}\right)$
\EndParFor
\State \Return $\LL_\text{bounds}$
\end{algorithmic}
    \caption{$\LL_\text{bounds} =$ FindBounds$(\LL)$ \blue{(Parallel)}}
    \label{alg:find_bounds}
\end{algorithm}

\begin{algorithm}[t]
\begin{algorithmic}[1]
    \State $\pu\gets+\infty,\ \pl\gets+\infty$
    
    \State $K \gets \regtext{length}(\LLbounds)$
    
    \For{$k\in\{1,\ldots,K\}$}
    
        \State $\left(\x,\begin{array}{ll}
        \{\min B_p(\x),&\max B_p(\x)\}  \\
        \{\min B_{gi}(\x),&\max B_{gi}(\x)\}  \\
        \{\min B_{hj}(\x),&\max B_{hj}(\x)\}  
        \end{array}\right) \gets \LLbounds[k]$
        
        \If {$-\eeq \leq \min B_{hj}(\x) \leq 0 \leq \max B_{hj}(\x) \leq \eeq$}
        
            \If{$\max B_{gi}(\x) \leq0$}
            
                \State $\pu\gets\min(\pu,\max B_p(\x))$
            \EndIf
            
            \If{$\min B_{gi}(\x) \leq0$}
            
                \State $\pl\gets\min(\pl,\min B_p(\x))$
                
            \EndIf
        \EndIf
    \EndFor
    
    \State Initialize lists for indices of patches to save or eliminate
    $\LLsave \gets \{\}$, $\LLelim \gets \{\}$
    
    \For{$k\in\{1,\cdots,K\}$}
        \If{$-\eeq \leq \min B_{hj}(\x) \leq 0 \leq \max B_{hj}(\x) \leq \eeq$
        
            \textbf{and} $\min B_{gi}(\x) \leq 0$
            
            \textbf{and} $\min B_p(\x) \leq \pu$}
            \State Append $k$ to $\LLsave$
        \Else
            \State Append $k$ to $\LLelim$
        \EndIf
    \EndFor
    \State \Return $\pu,\pl,\LLsave$, $\LLelim$
\end{algorithmic}
    \caption{$[\pu,\pl,\LLsave,\LLelim]$ = CutOffTest$(\LL_\text{bounds})$}
    \label{alg:cut-off}
\end{algorithm}

\begin{algorithm}[ht]
\begin{algorithmic}[1]
    \State $K_{\regtext{save}} \gets \regtext{length}(\LL_\regtext{save})$
    
    \State $K_{\regtext{elim}} \gets \regtext{length}(\LL_\regtext{elim})$
    
    \State $K_{\regtext{replace}} \gets K_{\regtext{elim}} - 1$
    
    \If{$K_{\regtext{elim}} = 0$ \textbf{or} $\LL_\regtext{elim}[1] > K_{\regtext{elim}}$}
        \State \Return $\LL$
    \EndIf
    \For{$k \in \{1,\cdots,K_{\regtext{elim}}\}$}
        \If{$\LL_\regtext{elim}[k]\geq K_{\regtext{save}}$}
            \State $K_{\regtext{replace}}\gets k-1$
            \State \textbf{break}
        \EndIf
    \EndFor
    \ParFor{$k \in \{1,\cdots,K_{\regtext{replace}}\}$}
	   \State  $\LL[\LLelim[k]] \gets \LL[\LLsave[K_{\regtext{save}} + 1 - k]]$
    \EndParFor
    \State \Return $\LL$
\end{algorithmic}
    \caption{$\LL =$ Eliminate$(\LL,\LL_\regtext{save},\LL_\regtext{elim})$ \blue{(Parallel)}}
    \label{alg:elim}
\end{algorithm}

We implement the cut-off tests as follows.
Algorithm \ref{alg:find_bounds} (FindBounds) computes the maximum and minimum element of each Bernstein patch; Algorithm \ref{alg:cut-off} (CutOffTest) implements the cut-off tests and marks all subboxes to be eliminated with a list $\LLelim$; and Algorithm \ref{alg:elim} (Eliminate) eliminates the marked subboxes from the list $\LL$.
Algorithms \ref{alg:find_bounds} and \ref{alg:elim} are parallelizable, whereas Algorithm \ref{alg:cut-off} must be computed serially.

\subsection{Advantages and Disadvantages of PCBA}

PCBA has several advantages.
First, it always finds a global optimum (if one exists), subject to tolerances.
PCBA does not require an initial guess, and does not converge to local minima, unlike generic nonlinear solvers (e.g., \fmincon\  \cite{MatlabOTB}).
It also does not require tuning hyperparameters.
As we show in \S\ref{sec:complexity_analysis}, PCBA has bounded time and memory complexity under certain assumptions.
Finally, due to parallelization, PCBA is fast enought to enable RTD* for real-time, safe, optimal trajectory planning, which we demonstrate in \S\ref{sec:hardware_demo}.

However, PCBA also has several limitations in comparison to traditional approaches to solving POPs.
First, to prove the bounds on time and memory usage, at any global minimum, we require that active constraints are linearly independent, and that the Hessian of the cost function is positive definite (see Theorems \ref{thm:roc:unconstrained} and \ref{thm:roc:constrained}).
Furthermore, due to the number of Bernstein patches growing exponentially with the decision variable dimension, we have not yet applied PCBA to problems larger than four-dimensional.
\section{Complexity Analysis}\label{sec:complexity_analysis}

In this section, we prove that Algorithm \ref{alg:pcba} terminates by bounding the number of iterations of PCBA for both unconstrained and constrained POPs.
We also prove the number of Bernstein patches (i.e., the length of the list $\LL$ in Algorithm \ref{alg:pcba}) is bounded after sufficiently many iterations, under certain assumptions.
For convenience, in the remainder of this section we use $\x \in \LL$ as a shorthand notation for $\x = \ell_1$ where $\ell = \itmx \in \LL$.
The proofs of Theorems \ref{thm:uncon:memory}, \ref{thm:roc:constrained}, and \ref{thm:memory_usage:constrained} are in the Appendix.

\subsection{Unconstrained Case}\label{sec:complexity_analysis:unconstrained}
We first consider unconstrained POPs whose optimal solutions are not on the boundary of $\uu$.
Note, we can treat optimal solutions on the boundary of $\uu$ as having active linear constraints; see \S\ref{sec:complexity_analysis:constrained} for the corresponding complexity analysis.
In the unconstrained case, all points in $\uu$ are feasible, and we are interested in solving 
\begin{equation}
\label{eq:POP_unconstrained}
\min_{x\,\in\,\uu\,\subset\,\R} \quad p(x)
\end{equation}
where $p$ is an $l$-dimensional multivariate polynomial.
Given an optimality tolerance $\epsilon$ and step tolerance $\delta$, we bound the number of iterations to solve \eqref{eq:POP_unconstrained} with PCBA as follows:

\begin{thm}
\label{thm:roc:unconstrained}
Let $p$ in \eqref{eq:POP_unconstrained} be a multivariate polynomial of dimension $l$ with Lipschitz constant $L_p$.
Then the maximum number of iterations needed to solve \eqref{eq:POP_unconstrained} up to accuracy $\epsilon$ and $\delta$ is
\begin{equation}
\label{eq:uncon:K}
    N =\left\lceil \max\left\{
    -\log_2 \delta,
    -\frac{1}{2}\log_2 \left( \frac{\epsilon}{4 \zeta_p} \right), -\log_2 \left( \frac{\epsilon}{2 L_p \sqrt{l}} \right) \right\} \right\rceil,
\end{equation}
where $\zeta_p$ is the constant in Theorem \ref{thm:err_bound} corresponding to polynomial $p$, and $\ceil{\cdot}$ rounds up to the nearest integer.
\end{thm}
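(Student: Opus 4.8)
The plan is to translate the two binding stopping criteria for the unconstrained problem into iteration thresholds and then take the largest. In the unconstrained case every item is feasible, so by Definition \ref{def:tolerances} the algorithm halts as soon as some item $(\x,\dots)\in\LL$ satisfies both the step tolerance $|\x|\le\delta$ and the optimality condition $\max B_p(\x)-\min B_p(\y)\le\epsilon$ for all $\y\in\LL$. By Definition \ref{def:current_best_soln} and the construction of $\pl$ in Algorithm \ref{alg:cut-off}, with no inequality or equality constraints these reduce to $\pu=\min_{\y\in\LL}\max B_p(\y)$ and $\pl=\min_{\y\in\LL}\min B_p(\y)$, so the optimality condition is exactly $\pu-\pl\le\epsilon$; moreover the item realizing $\pu$ automatically satisfies condition (d). First I would dispatch the step tolerance: by Remark \ref{rem:subdiv} every subbox has maximum width $2^{-n}$ after the $n$th iteration, so all items (in particular the $\pu$-realizer) satisfy $|\x|\le\delta$ once $2^{-n}\le\delta$, i.e. $n\ge-\log_2\delta$, which is the first term in the maximum.

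The core of the argument is bounding $\pu-\pl$ after $n$ iterations. Write $x^*$ for a global minimizer and $p^*=p(x^*)$. For the lower bound, Theorem \ref{thm:err_bound} applied to any $\y\in\LL$ gives $\min B_p(\y)\ge\min_{x\in\y}p(x)-\zeta_p 2^{-2n}\ge p^*-\zeta_p 2^{-2n}$, using $\y\subseteq\uu$; hence $\pl\ge p^*-\zeta_p 2^{-2n}$. For the upper bound I would invoke Corollary \ref{cor:cut-off}, which guarantees that an item $\x^*\in\LL$ with $x^*\in\x^*$ survives every cut-off. Applying Corollary \ref{cor:err_bound} to $\x^*$ yields $\max B_p(\x^*)\le\max_{x\in\x^*}p(x)+\zeta_p 2^{-2n}$, and Lipschitz continuity over the box diagonal (of Euclidean length at most $\sqrt{l}\,|\x^*|=\sqrt{l}\,2^{-n}$) gives $\max_{x\in\x^*}p(x)\le p^*+L_p\sqrt{l}\,2^{-n}$. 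Since $\pu\le\max B_p(\x^*)$, combining these estimates produces $\pu-\pl\le L_p\sqrt{l}\,2^{-n}+2\zeta_p 2^{-2n}$.

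It then remains to force the right-hand side below $\epsilon$ by allotting half of $\epsilon$ to each summand. Requiring $L_p\sqrt{l}\,2^{-n}\le\epsilon/2$ gives $n\ge-\log_2\!\big(\frac{\epsilon}{2L_p\sqrt{l}}\big)$ (the third term), and requiring $2\zeta_p 2^{-2n}\le\epsilon/2$ gives $n\ge-\frac{1}{2}\log_2\!\big(\frac{\epsilon}{4\zeta_p}\big)$ (the second term). Once $n$ exceeds all three thresholds, the surviving item realizing $\pu$ simultaneously meets $|\x|\le\delta$ and $\pu-\pl\le\epsilon$, so the algorithm terminates; taking the maximum of the three thresholds and rounding up with $\ceil{\cdot}$ yields the stated $N$.

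I expect the main obstacle to be the upper bound on $\pu$, since it must combine three ingredients at once: the survival of the optimizer's subbox (Corollary \ref{cor:cut-off}) so that a usable $\x^*$ is present in $\LL$, the Bernstein maximum-enclosure error from Corollary \ref{cor:err_bound}, and the Lipschitz estimate across the box. The subtle point is that $|\x|$ measures the maximum side width whereas the Lipschitz bound is governed by the Euclidean diameter, which is where the $\sqrt{l}$ factor enters; keeping these two notions distinct is essential to matching the exact constant $\frac{\epsilon}{2L_p\sqrt{l}}$ in the statement.
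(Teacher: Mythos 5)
Your proposal is correct and follows essentially the same route as the paper's proof: invoke Corollary \ref{cor:cut-off} so the subbox containing the global minimizer survives in $\LL$, bound $\min B_p(\y)$ from below via Theorem \ref{thm:err_bound}, bound $\max B_p(\x)$ from above via Corollary \ref{cor:err_bound} plus the Lipschitz estimate over the Euclidean diameter $\sqrt{l}\,2^{-n}$, and split $\epsilon$ evenly between the linear and quadratic terms to recover the three thresholds in \eqref{eq:uncon:K}. The only cosmetic difference is that you phrase the optimality criterion through $\pu-\pl$ rather than directly through condition (d) of Definition \ref{def:tolerances}, which changes nothing substantive.
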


\begin{proof}
Let $n$ be the current iteration number.
It is sufficient to show that, for all $n > N$, there exists a subbox $\x \in \LL$ of $\uu$ such that the following conditions hold:
\begin{enumerate}[label=(\alph*),ref=(\alph*)]
    \item \label{cond:unconstrained:2} $|\x| \leq \delta$; and
    \item \label{cond:unconstrained:3} $\max B_p(\x) - \min B_p(\y) \leq \epsilon$ for all $\y \in \LL$. 
\end{enumerate}

Let $x^* \in \uu$ be a minimizer of \eqref{eq:POP_unconstrained}.
According to Corollary \ref{cor:cut-off}, there exists a subbox $\x \in \LL$ such that $x^* \in \x$.
To prove such $\x$ satisfies Condition \ref{cond:unconstrained:2}, notice from Remark \ref{rem:subdiv} that
\begin{equation}
    |\x| \leq 2^{-n} \leq 2^{-N} \leq \delta.
\end{equation}

To prove Condition \ref{cond:unconstrained:3}, first notice for any $\y \in \LL$, 
\begin{align}
    \min B_p(\y) & \geq \min_{y \in \y} p(y) - \zeta_p \cdot 2^{-2n} \label{eq:uncon:arg1} \\
    & \geq p(x^*) - \zeta_p \cdot 2^{-2n} \label{eq:uncon:arg2},
\end{align}
where
\eqref{eq:uncon:arg1} follows from Theorem \ref{thm:err_bound};
and \eqref{eq:uncon:arg2} follows from the definition of $x^*$.
Therefore
\begin{align}
    \max B_p(\x) - \min B_p(\y) & \leq \max_{x \in \x} p(x) - p(x^*) + 2 \zeta_p \cdot 2^{-2n} \label{eq:uncon:arg3} \\
    & \leq L_p \cdot \left( \sqrt{l} \cdot |\x| \right) + 2 \zeta_p \cdot 2^{-2n} \label{eq:uncon:arg4} \\
    & \leq \frac{\epsilon}{2} + \frac{\epsilon}{2} = \epsilon \label{eq:uncon:arg5}
\end{align}
where
\eqref{eq:uncon:arg3} follows from Corollary \ref{cor:err_bound} and \eqref{eq:uncon:arg2};
\eqref{eq:uncon:arg4} is true because $\|x^* - x \| \leq \sqrt{l} \cdot |\x|$ for all $x \in \x$;
and \eqref{eq:uncon:arg5} follows from \eqref{eq:uncon:K} and the assumption that $n > N$.
\end{proof}

According to \eqref{eq:uncon:K}, the rate of convergence with respect to (WRT) the decision variables is quadratic (1\ts{st} term); the rate of convergence WRT the objective function is either quadratic (2\ts{nd} term) or linear (3\ts{rd} term), depending on which term dominates.
However, a tighter bound exists if one of the global minimizers satisfies the second-order sufficient condition for optimality \cite[Theorem~2.4]{nocedal2006numerical}, which is shown in the following theorem:

\begin{thm}
\label{thm:uncon:roc2}
Let $p$ in \eqref{eq:POP_unconstrained} be a multivariate polynomial of dimension $l$.
Let the Hessian $\nabla^2 p$ be positive definite at some global minimizer $x^*$ of \eqref{eq:POP_unconstrained}, where $x^*$ is not on the boundary of $\uu$.
Then the maximum number of iterations needed to solve \eqref{eq:POP_unconstrained} up to accuracy $\epsilon$ and $\delta$ is
\begin{equation}
    N = \left\lceil \max\left\{
    -\log_2 \delta
    , \, C_1, \, -\frac{1}{2} \log_2 \epsilon + C_2 \right\} \right\rceil,\label{eq:max_iters_glob_min_not_on_bdry}
\end{equation}
where $C_1$ and $C_2$ are constants that only depend on $p$.
\end{thm}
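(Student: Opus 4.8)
The plan is to follow the skeleton of the proof of Theorem~\ref{thm:roc:unconstrained}, replacing the linear (Lipschitz) bound on $\max_{x\in\x}p(x)-p(x^*)$ with a quadratic one obtained from a second-order Taylor expansion at $x^*$. As in that proof, it suffices to exhibit, for every $n>N$, a subbox $\x\in\LL$ satisfying the step condition $|\x|\le\delta$ and the optimality condition $\max B_p(\x)-\min B_p(\y)\le\epsilon$ for all $\y\in\LL$. By Corollary~\ref{cor:cut-off} the subbox $\x$ containing $x^*$ is always present in $\LL$, and by Remark~\ref{rem:subdiv} it has $|\x|\le 2^{-n}$; the step condition therefore holds once $n\ge-\log_2\delta$, exactly as before.

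For the optimality condition I would first reproduce the bound
\begin{equation}
\max B_p(\x)-\min B_p(\y)\ \le\ \max_{x\in\x}p(x)-p(x^*)+2\zeta_p\cdot 2^{-2n},
\end{equation}
which follows from Theorem~\ref{thm:err_bound}, Corollary~\ref{cor:err_bound}, and $p(x^*)=\min_{u\in\uu}p(u)$, just as in \eqref{eq:uncon:arg1}--\eqref{eq:uncon:arg3}. The crucial new ingredient is to bound the first difference quadratically. Because $x^*$ lies in the interior of $\uu$ and is a minimizer, first-order optimality gives $\nabla p(x^*)=0$; Taylor's theorem with remainder then yields $p(x)-p(x^*)=\tfrac12(x-x^*)^\top\nabla^2 p(\xi)(x-x^*)$ for some $\xi$ on the segment from $x^*$ to $x$. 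Bounding the quadratic form by the operator norm of the Hessian and using the diameter estimate $\|x-x^*\|\le\sqrt{l}\,|\x|\le\sqrt{l}\,2^{-n}$, valid for all $x\in\x$ since $x^*\in\x$, gives
\begin{equation}
\max_{x\in\x}p(x)-p(x^*)\ \le\ K\,l\cdot 2^{-2n}
\end{equation}
for a constant $K$ governed by the Hessian near $x^*$.

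I would make this local Hessian bound rigorous by fixing a ball $\B_\rho(x^*)$ on which the higher-order Taylor remainder of $p$ is dominated by the quadratic term, which determines both $K$ and the radius $\rho$ purely from the derivatives of $p$ at $x^*$. The subbox $\x$ lies inside $\B_\rho(x^*)$ once $\sqrt{l}\,2^{-n}\le\rho$, i.e.\ once $n\ge C_1:=\log_2(\sqrt{l}/\rho)$. Combining the two displays yields the fully quadratic estimate $(Kl+2\zeta_p)\,2^{-2n}\le\epsilon$, and solving for $n$ produces the term $-\tfrac12\log_2\epsilon+C_2$ with $C_2:=\tfrac12\log_2(Kl+2\zeta_p)$; both $C_1$ and $C_2$ depend only on $p$ (through $\nabla^2 p(x^*)$, the remainder of $p$, $\zeta_p$, and $l$). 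Taking the maximum of the three requirements $n\ge-\log_2\delta$, $n\ge C_1$, and $n\ge-\tfrac12\log_2\epsilon+C_2$ and rounding up gives \eqref{eq:max_iters_glob_min_not_on_bdry}.

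The main obstacle is this middle step: controlling the higher-order remainder to obtain an \emph{explicit}, $n$-independent neighborhood $\B_\rho(x^*)$ and constant $K$, so that the quadratic bound is genuinely uniform over the shrinking subboxes rather than merely asymptotic. The interior assumption is essential here — it is precisely $\nabla p(x^*)=0$ that kills the linear term and upgrades the linear rate of Theorem~\ref{thm:roc:unconstrained} to a quadratic one, while positive definiteness of $\nabla^2 p(x^*)$ is the accompanying second-order sufficient condition guaranteeing that $x^*$ is a strict minimizer. On the boundary of $\uu$ the gradient need not vanish, which is why that case is deferred to \S\ref{sec:complexity_analysis:constrained}.
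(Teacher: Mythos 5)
Your proposal is correct and follows essentially the same route as the paper: both use $\nabla p(x^*)=0$ at the interior minimizer, a second-order Taylor expansion to get $\max_{x\in\x}p(x)-p(x^*)\le K\,l\cdot 2^{-2n}$ on a fixed ball $\B_\rho(x^*)$ determined only by $p$, and then combine with the $2\zeta_p\cdot 2^{-2n}$ Bernstein enclosure error to solve for $n$, yielding the same constants $C_1=-\log_2(\rho/\sqrt{l})$ and $C_2=\tfrac12\log_2(Kl+2\zeta_p)$ (the paper takes $K=\sigma_{\max}$, the largest singular value of $\nabla^2 p(x^*)$). The only cosmetic difference is your use of the Lagrange form of the Taylor remainder versus the paper's Peano form with an $o(\|t\|^2)$ term absorbed by doubling the Hessian bound on a sufficiently small ball.
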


\noindent Theorem \ref{thm:uncon:roc2} proves a quadratic rate of convergence WRT the objective function once the number of iterations exceeds a threshold $C_1$, given that the second-order optimality condition is satisfied at some global minimizer.
We now discuss the number of patches remaining after sufficiently many iterations, which gives an estimate of memory usage when Algorithm \ref{alg:pcba} is applied to solve \eqref{eq:POP_unconstrained}.

\begin{thm}
\label{thm:uncon:memory}
Suppose there are $m < \infty$ global minimizers $x^*_1, \cdots, x^*_m$ of \eqref{eq:POP_unconstrained},
and none of them are on the boundary of the unit box $\uu$.
Let the Hessian $\nabla^2 p$ be positive definite at these minimizers.
Then after sufficiently many iterations of Algorithm \ref{alg:pcba}, the number of Bernstein patches remaining (i.e., length of the list $\LL$ in Algorithm \ref{alg:pcba}) is bounded by a constant.
\end{thm}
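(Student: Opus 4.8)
The plan is to show that for large $n$ every surviving subbox must lie within distance $O(2^{-n})$ of one of the global minimizers, and then to count such subboxes by a volume argument. Throughout I work at the end of a full iteration $n$, when by Remark \ref{rem:subdiv} every item $\x \in \LL$ is a cube of side $2^{-n}$ with pairwise-disjoint interiors; the count at intermediate sub-iterations differs from this by at most the constant factor $2^l$ (each of the $l$ subdivision steps at most doubles the length of $\LL$), so it suffices to bound the count at the end of each full iteration. Since the problem is unconstrained, every item is feasible, so by Definitions \ref{def:current_best_soln} and \ref{def:suboptimal} an item $\x$ survives the cut-off test only if $\min B_p(\x) \leq \pu$, while by Corollary \ref{cor:cut-off} the cube containing any minimizer always survives.

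First I would record the local quadratic behaviour. Let $p^\star$ denote the global minimum value. Because each $x^*_k$ is an interior minimizer (hence $\nabla p(x^*_k) = 0$) and $\nabla^2 p(x^*_k)$ is positive definite, Taylor's theorem furnishes constants $\rho > 0$ and $0 < c \leq C$ such that, for $\|x - x^*_k\| \leq \rho$,
\[
    p^\star + c\,\|x - x^*_k\|^2 \ \leq\ p(x) \ \leq\ p^\star + C\,\|x - x^*_k\|^2 .
\]
Since the $x^*_k$ are the \emph{only} global minimizers and $\uu$ is compact, after shrinking $\rho$ there is a gap $\mu > 0$ with $p(x) \geq p^\star + \mu$ for every $x \in \uu \setminus \bigcup_{k} \B_\rho(x^*_k)$.

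The crucial step is to bound the solution estimate $\pu$ at the correct scale. Let $\x_0 \ni x^*_1$ be the surviving cube of side $2^{-n}$ containing $x^*_1$; every $x \in \x_0$ satisfies $\|x - x^*_1\| \leq \sqrt{l}\,2^{-n}$, so the upper quadratic bound gives $\max_{x \in \x_0} p(x) \leq p^\star + C l\,2^{-2n}$, and Corollary \ref{cor:err_bound} then yields $\pu \leq \max B_p(\x_0) \leq p^\star + (Cl + \zeta_p)\,2^{-2n}$. The vanishing of the gradient is essential here: a mere Lipschitz estimate would only give $\pu - p^\star = O(2^{-n})$, which the scaling below shows is too weak. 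Combining this with Theorem \ref{thm:err_bound}, any surviving box $\x$ obeys $\min_{x \in \x} p(x) \leq \min B_p(\x) + \zeta_p 2^{-2n} \leq \pu + \zeta_p 2^{-2n} \leq p^\star + \kappa\,2^{-2n}$ with $\kappa = Cl + 2\zeta_p$, so $\x$ meets the sublevel set $S_n = \{x \in \uu : p(x) \leq p^\star + \kappa\,2^{-2n}\}$. For $n$ large enough that $\kappa\,2^{-2n} < \mu$, the gap forces $S_n \subseteq \bigcup_k \B_\rho(x^*_k)$, and the lower quadratic bound then confines $S_n \subseteq \bigcup_k \B_{r_n}(x^*_k)$ with radius $r_n = \sqrt{\kappa/c}\;2^{-n}$.

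Finally I would count. Each surviving cube has diameter $\sqrt{l}\,2^{-n}$ and meets some $\B_{r_n}(x^*_k)$, hence is contained in $\B_R(x^*_k)$ with $R = r_n + \sqrt{l}\,2^{-n} = (\sqrt{\kappa/c} + \sqrt{l})\,2^{-n}$. The surviving cubes have disjoint interiors, each of volume $2^{-ln}$, so the number contained in one ball $\B_R(x^*_k)$ is at most $\vol(\B_R)/2^{-ln} = V_l\,(R\,2^n)^l = V_l\,(\sqrt{\kappa/c} + \sqrt{l})^l$, where $V_l$ is the volume of the unit $l$-ball. Summing over the $m$ minimizers bounds the number of surviving cubes by $m\,V_l\,(\sqrt{\kappa/c} + \sqrt{l})^l$, a constant independent of $n$, and the factor $2^l$ for intermediate sub-iterations leaves the bound constant. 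The main obstacle is exactly the scale-matching in the third paragraph: one must exploit the quadratic (not merely Lipschitz) behaviour at the interior minimizers so that the surviving sublevel set shrinks at the same $2^{-n}$ rate as the subbox side length; otherwise the radius $r_n$ would scale like $2^{-n/2}$ and the count would grow like $2^{ln/2}$. The positive-definiteness hypothesis and the assumption that no minimizer lies on $\partial \uu$ are precisely what make this matching possible.
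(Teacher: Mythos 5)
Your proof is correct and follows essentially the same route as the paper's: a two-sided quadratic bound near each nondegenerate interior minimizer, combined with the $\zeta_p \cdot 2^{-2n}$ Bernstein enclosure error, shows every surviving box lies within $O(2^{-n})$ of some minimizer, and a volume/packing count then gives a constant bound of the same form $m\cdot(\text{const})^l$. The only differences are organizational --- you route the suboptimality argument through the solution estimate $\pu$ and a shrinking sublevel set rather than comparing $\min B_p(\y)$ directly against $\max B_p(\x)$ for the box containing $x^*$, and you explicitly dispose of boxes far from every minimizer via the gap $\mu$ (and of intermediate sub-iterations via the factor $2^l$), cases the paper's proof leaves implicit.
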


\noindent The constant bound in Theorem \ref{thm:uncon:memory} scales exponentially with the problem dimension, and is a function of the condition number of the cost function's Hessian at the minimizers.

\subsection{Constrained Case}\label{sec:complexity_analysis:constrained}

\begin{thm}
\label{thm:roc:constrained}
Suppose that the linear independence constraint qualification (LICQ) \cite[Definition~12.4]{nocedal2006numerical} is satisfied at all global minimizers $x_1^*, \cdots x_m^*$ of the constrained POP \eqref{eq:POP}, and at least one constraint is active (i.e., the active set $\A(x^*)$ \cite[Definition~12.1]{nocedal2006numerical} is nonempty) at some minimizer $x^* \in \{x_1^*, \cdots, x_m^*\}$.
Then the maximum number of iterations needed to solve \eqref{eq:POP} up to accuracy $\epsilon$, $\delta$, and equality constraint tolerance $\eeq$ is 
\begin{equation}
\label{eq:roc:constrained_constants}
\begin{split}
    N := \Bigg\lceil \max \Bigg\{ & 
    C_7, \,
    -\log_2 \delta
    , \,
    - \log_2 \eeq + C_8, \,
    - \log_2 \epsilon + C_9
    \Bigg\} \Bigg\rceil,
\end{split}
\end{equation}
where $C_7$, $C_8$, $C_9$ are constants.
\end{thm}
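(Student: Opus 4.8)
**

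The plan is to establish a convergence rate for the constrained case by reducing to the machinery already developed for the unconstrained case (Theorems \ref{thm:roc:unconstrained} and \ref{thm:uncon:roc2}), while carefully accounting for the three new sources of error that arise from the constraints: (i) the step tolerance $\delta$ on the subbox width, (ii) the equality-constraint tolerance $\eeq$, and (iii) the optimality tolerance $\epsilon$. As in the unconstrained proof, it suffices to exhibit, for all $n > N$, a subbox $\x \in \LL$ containing a global minimizer $x^*$ that simultaneously satisfies all four stopping criteria of Definition \ref{def:tolerances}. By Corollary \ref{cor:cut-off}, such a subbox $\x$ (with $x^* \in \x$) is never eliminated, so I may work with it directly; by Remark \ref{rem:subdiv} its width satisfies $|\x| \le 2^{-n}$.

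First I would handle each criterion by extracting the iteration count it forces. Criterion (a), $|\x| \le \delta$, reduces immediately to $n \ge -\log_2\delta$ via Remark \ref{rem:subdiv}, mirroring the unconstrained argument. For the equality-constraint criterion (c), I would use Theorem \ref{thm:err_bound} and Corollary \ref{cor:err_bound} applied to each $h_j$: since $h_j(x^*) = 0$ and $\|x^* - x\| \le \sqrt{l}\,|\x|$ on $\x$, the Lipschitz bound gives $|h_j(x)| \le L_{h_j}\sqrt{l}\,2^{-n}$ for all $x \in \x$, and the Bernstein bounds sit within an additional $\zeta_{h_j} 2^{-2n}$ of these values; forcing the patch extrema of $B_{h_j}(\x)$ into $[-\eeq,\eeq]$ then yields an inequality of the form $n \ge -\log_2\eeq + C_8$. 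The optimality criterion (d) is the unconstrained-style argument: combining Theorem \ref{thm:err_bound}, Corollary \ref{cor:err_bound}, and the Lipschitz constant of $p$ gives $\max B_p(\x) - \min B_p(\y) \le \epsilon$ once $n \ge -\log_2\epsilon + C_9$, exactly as in \eqref{eq:uncon:arg5}, with the LICQ and positive-definiteness hypotheses invoked (as in Theorem \ref{thm:uncon:roc2}) to get the sharper logarithmic rate rather than the cruder bound of Theorem \ref{thm:roc:unconstrained}.

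The genuinely new step, and the one I expect to be the main obstacle, is the inequality-feasibility criterion (b): I must show that for large $n$ the subbox $\x$ containing $x^*$ is actually \emph{flagged feasible}, i.e. $\max B_{gi}(\x) \le 0$ for every $i$. This is delicate precisely at an \emph{active} constraint, where $g_i(x^*) = 0$: there the upper Bernstein bound $\max B_{gi}(\x)$ is pinned near zero from above and will not in general drop below zero no matter how small $\x$ becomes, so a naive argument fails. This is exactly why the hypotheses of the theorem matter — the constant $C_7$ must absorb the interplay between LICQ and the geometry of the active set. I would resolve this by appealing to the idea flagged in \S\ref{sec:complexity_analysis:unconstrained} and \S\ref{sec:complexity_analysis:constrained}: treat an active constraint, via LICQ, as locally behaving like an independent linear constraint, and argue that among the subboxes tiling a neighborhood of $x^*$ there is always at least one lying (up to the tolerances) on the feasible side, so that a feasible certified subbox containing a near-minimizer persists in $\LL$. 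The bookkeeping to make ``near-minimizer on the feasible side'' precise — showing its cost is within $\epsilon$ of $p(x^*)$ and that it coexists in the same subbox with $x^*$ or an admissible neighbor — is where the constants $C_7, C_8, C_9$ are pinned down, and where the proof is technically heaviest.

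Finally I would collect the four lower bounds on $n$ into a single maximum and take the ceiling, yielding the stated $N$ in \eqref{eq:roc:constrained_constants}; since each criterion is met once $n$ exceeds its respective threshold, all four hold simultaneously for $n > N$, and the stopping test of Definition \ref{def:tolerances} triggers. Because the full argument for criterion (b) requires the linear-algebraic reduction under LICQ and the positive-definite Hessian assumption, I expect this theorem's proof to be deferred to the Appendix (as the excerpt indicates), with the unconstrained results reused as black boxes wherever the constraints are inactive.
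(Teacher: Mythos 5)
There are two genuine gaps. First, on the inequality-feasibility criterion (b): you correctly identify the central obstacle --- the subbox containing $x^*$ will have $\max B_{gi}(\x)$ pinned slightly above zero at an active constraint, so it is never certified feasible --- but your proposed resolution (``among the subboxes tiling a neighborhood of $x^*$ there is always at least one lying on the feasible side'') is not an argument, and it conflicts with your opening claim that you will work with the box containing $x^*$ (which Corollary \ref{cor:cut-off} protects, but which fails (b)). The paper's mechanism is concrete: using LICQ it applies the implicit function theorem to the system $g_i(a) + (\zeta_{gi}+L_{gi}\sqrt{l})\,t = 0$ for the active $i$, $h_j(a)=0$, $Z^\top a$ fixed, obtaining a curve $\xi(t)$ with $\xi(0)=x^*$; setting $t = 2^{-n}$ produces a point $x=\xi(2^{-n})$ that is strictly feasible by a margin of exactly $(\zeta_{gi}+L_{gi}\sqrt{l})2^{-n}$, which is precisely enough to absorb the Lipschitz variation over the subbox plus the $\zeta_{gi}2^{-2n}$ Bernstein error, while staying \emph{on} the equality manifold. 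Your tiling heuristic addresses neither the equality constraints (a generic neighboring subbox leaves the zero set of the $h_j$) nor the quantitative margin needed to beat the Bernstein over-approximation. Moreover, since this perturbed box does not contain a global minimizer, it may itself be eliminated as suboptimal; the paper needs a separate step showing that in that case some other surviving feasible box does even better, a case you do not consider.

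Second, your treatment of the optimality criterion (d) is wrong, not merely incomplete: you assert it follows ``exactly as in \eqref{eq:uncon:arg5},'' but the unconstrained chain relies on $\min_{y\in\y}p(y)\ge p(x^*)$ for every surviving $\y$, which fails here --- a surviving subbox $\y$ may consist largely of \emph{infeasible} points where $p$ is far below $p(x^*)$, yet not be certifiably infeasible. The paper closes this gap by showing every surviving $\y$ contains points feasible for a relaxed POP whose constraints are loosened by $\gamma_{gi}(n)=L_{gi}\sqrt{l}\,2^{-n}+\zeta_{gi}2^{-2n}$ (and similarly for $h_j$), and then invoking Lagrange-multiplier sensitivity analysis to bound $p(x^*)-p(y_n^*)$ by a multiple of $2^{-n}$; this is where the constants $C_6$ and $C_9$ actually come from and where the active-set hypothesis does real work. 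You also invoke a positive-definite Hessian hypothesis that this theorem does not assume (that belongs to Theorems \ref{thm:uncon:roc2} and \ref{thm:uncon:memory}).
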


\noindent Theorem \ref{thm:roc:constrained} gives a bound on the number of PCBA iterations needed to solve a POP up to specified tolerances.
In particular, \eqref{eq:roc:constrained_constants} shows the rate of convergence is
\emph{linear} in step tolerance (2\ts{nd} term), equality constraint tolerance (3\ts{rd} term), and objective function (4\ts{th} term),
once the number of iterations is larger than a constant (1\ts{st} term).
We next prove a bound on the number of items in the list $\LL$ after sufficiently many iterations.

\begin{thm}
\label{thm:memory_usage:constrained}
Suppose there are $m$ ($m < \infty$) global minimizers $x^*_1, \cdots, x^*_m$ of the constrained problem \eqref{eq:POP},
and none of them are on the boundary of the unit box $\uu$.
Let the critical cone (see \cite[(12.53)]{nocedal2006numerical}) be nonempty for \eqref{prog:relaxed_POP} as in the proof of Theorem \ref{thm:roc:constrained}.
Then after sufficiently many iterations of Algorithm \ref{alg:pcba}, the number of Bernstein patches remaining (i.e., length of the list $\LL$) is bounded by a constant.
\end{thm}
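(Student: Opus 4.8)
The plan is to bound the number of surviving patches after many iterations by showing that only patches ``near'' the finite set of minimizers can avoid elimination, and that the number of such patches is controlled independently of the iteration count $n$. The key observation is that Theorem \ref{thm:roc:constrained} already guarantees termination after finitely many iterations, so ``after sufficiently many iterations'' we may take $n$ large enough that the step tolerance, equality tolerance, and optimality gap are all met. The goal is then purely geometric: to count how many subboxes of width $2^{-n}$ can simultaneously (i) be feasible or undecided (not eliminated by infeasibility), and (ii) fail to be suboptimal (so they survive the cut-off test of Theorem \ref{thm:cut-off}).

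\textbf{First I would} reduce to a single minimizer $x^*$, handle each of the $m$ minimizers separately, and multiply by $m$ at the end. Near $x^*$, I would pass to the relaxed POP from the proof of Theorem \ref{thm:roc:constrained} (replacing equality constraints $h_j = 0$ by the pair $-\eeq \le h_j \le \eeq$), so that the feasible region is described purely by inequalities and the critical-cone hypothesis applies. A patch $\x$ survives only if $\min B_p(\x) \le \pu$ and $\x$ is not infeasible. Using Theorem \ref{thm:enclosure} together with the error bounds of Theorem \ref{thm:err_bound} and Corollary \ref{cor:err_bound} (with the $\zeta \cdot 2^{-2n}$ terms), I would translate ``$\x$ survives'' into an analytic condition: every surviving $\x$ must contain a point $x$ with $p(x) \le p(x^*) + O(2^{-2n})$ that is feasible up to the tolerances. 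In other words, the centers of surviving patches lie in a sublevel set $\{x : p(x) \le p^* + O(2^{-2n})\}$ intersected with an $O(\eeq)$-neighborhood of the feasible region.

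\textbf{The main obstacle} is estimating the volume (hence the patch count) of this ``almost-optimal, almost-feasible'' region and showing it is $O(2^{ln})$ so that, after dividing by the patch volume $2^{-ln}$, the count stays bounded. This is where the positive-definiteness/critical-cone assumption does the real work: along directions tangent to the active constraints, the restricted Hessian (the reduced Hessian on the critical cone) is positive definite, so $p$ grows quadratically, confining the near-optimal set to a region of diameter $O(2^{-n})$ in those directions; along the remaining directions the LICQ and the $O(\eeq) = O(2^{-n})$ (via the third term of \eqref{eq:roc:constrained_constants}) constraint-tolerance thickness confines it to $O(2^{-n})$ as well. I would use a quadratic Taylor expansion of $p$ and a linearization of the active constraints near $x^*$, invoking LICQ to make the linearized active-constraint Jacobian full rank, so the near-optimal feasible set fits inside a box of side $O(2^{-n})$ in every direction; this box then meets only a constant number (depending on $l$ and the condition number of the reduced Hessian) of the width-$2^{-n}$ subboxes of the subdivision grid.

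\textbf{Finally I would} assemble these pieces: the total count is at most $m$ times this per-minimizer constant, and since the subdivision of PCBA aligns subboxes on a dyadic grid, the number of grid cells meeting a fixed-shape $O(2^{-n})$-box is bounded uniformly in $n$. The constant will scale exponentially in $l$ (reflecting the $2^{ln}$-versus-$2^{-ln}$ volume bookkeeping) and will depend on the conditioning of the reduced Hessian and on the constraint-gradient conditioning guaranteed by LICQ, matching the informal remark following the theorem statement. I would be careful that undecided patches (those straddling the equality-tolerance band) are also counted, but these too are confined to the $O(\eeq)$-thick slab and so contribute only a constant.
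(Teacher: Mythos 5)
Your overall counting strategy (confine surviving patches to an $O(2^{-n})$-neighborhood of each of the $m$ minimizers, then divide by the grid width $2^{-n}$ to get a constant, times $m$) matches the paper's. But the growth mechanism you invoke is the wrong one for this theorem, and this is a genuine gap. You lean on positive definiteness of the reduced Hessian on the critical cone to get \emph{quadratic} growth of $p$ away from $x^*$. That hypothesis is not part of Theorem \ref{thm:memory_usage:constrained} --- it is the hypothesis of the \emph{unconstrained} memory result (Theorem \ref{thm:uncon:memory}); here the only second-order-flavored assumption is that the critical cone of \eqref{prog:relaxed_POP} is nonempty. The paper instead extracts \emph{linear} growth: using \cite[(12.71)]{nocedal2006numerical} and LICQ, it shows $J(d)=\sum_i \lambda_{gi}^*\nabla g_i(y_n^*)^\top d$ is bounded below by some $q>0$ uniformly over admissible unit directions $d$ (by compactness of that set of directions), whence $p(z)\geq p(y_n^*)+\tfrac{q}{2}\|z-y_n^*\|$ near $y_n^*$. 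It is this first-order growth, driven by the active inequality constraints and their multipliers, that eliminates every patch farther than $C_{10}\cdot 2^{-n}$ from $y_n^*$.

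The quantitative reason your route fails is your claim that surviving patches contain a point with $p(x)\le p(x^*)+O(2^{-2n})$. In the constrained case the incumbent $\pu$ is only within $O(2^{-n})$ of $p(x^*)$ --- see \eqref{eq:con:mem:bound_x_x*}, which comes from the feasible witness box being centered at $\xi(2^{-n})$, displaced from $x^*$ by $O(2^{-n})$, so the gap is Lipschitz-order, not Bernstein-enclosure-order. (The $O(2^{-2n})$ gap is available only in the unconstrained setting, where convergence is quadratic.) With an $O(2^{-n})$ sublevel tolerance, quadratic growth confines surviving patches only to a ball of radius $O(2^{-n/2})$, which contains on the order of $2^{ln/2}$ subboxes of width $2^{-n}$ --- unbounded in $n$. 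You need linear growth in \emph{every} admissible direction to shrink that ball to radius $O(2^{-n})$, which is precisely what the paper's critical-cone/LICQ argument delivers and what your Hessian-based argument cannot.
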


\noindent The constant proved in Theorem \ref{thm:memory_usage:constrained} scales exponentially with respect to the dimension of the problem.

\subsection{Memory Usage Implementation}\label{subsec:memory_usage}

We now state the amount of GPU memory required to store a single item $\itmx \in \LL$, given the degree and dimension of the cost and constraint polynomials.
Note that, for our implementation, all numbers in an item are represented using 4B of space, as either floats or unsigned integers.

For a multi-index $J = (j_1,\cdots,j_l) \in \N^l$, let $\Pi J = j_1\times\cdots\times j_l$, and let $J + n = (j_1 + n,\cdots,j_l + n)$ for $n \in \N$.
Let $P$ be the multi-degree of the cost $p$.
Let $G$ be a multi-degree large enough for all inequality constraints $g_i$, and $H$ a multi-degree large enough for all equality constraints $h_j$.
By ``large enough'' we mean that, if $G_i$ is the multi-degree of any $g_i$ then $G_i \leq G$ (and similarly for $H$).
Then, as per \cite[\S4.1]{titi2017fast}, an item can be stored in memory as an array with the following number of entries:
\begin{align}\label{eq:memory_per_item}
    2l + (\Pi(P+1)) + (\Nineq\cdot\Pi(G+1)) + (\Neq\cdot\Pi(H+1)),
\end{align}
where the first $2l$ entries store the upper and lower bounds (in each dimension) of the subbox $\x$.

\subsection{Summary}
We have shown that PCBA will find a solution to \eqref{eq:POP}, if one exists, in bounded time.
We have also shown that the memory usage of PCBA is bounded after a finite number of iterations, which implies that the memory usage is bounded; and we have provided a way to compute how much memory is required to store the list $\LL$.
Next, we benchmark PCBA on a variety of problems and compare it to two other solvers.
\section{PCBA Evaluation}\label{sec:benchmarking_PCBA}

In this section, we compare PCBA against a derivative-based solver (MATLAB's \fmincon\ \cite{MatlabOTB}) and a convex relaxation method (Lasserre's BSOS \cite{lasserre2017bounded}).
First, we test all three solvers on eight \textbf{Benchmark Evaluation} problems with dimension less than or equal to 4.
Second, we compare all three solvers on several \textbf{Increasing Number of Constraints} problems, to assess how each solver scales on a variety of difficult objective functions \cite{gavana2019testsuite}.

All of the solvers/problems in this section are run on a computer with a 3.7GHz processor, 16 GB of RAM, and an Nvidia GTX 1080 Ti GPU.
PCBA is implemented with MATLAB R2017b executables and CUDA 10.2.
Our code is available on GitHub: \texttt{\url{https://github.com/ramvasudevan/GlobOptBernstein}}.

\subsection{Parameter Selection}
To set up a fair comparison, we scale each problem to the $\uu = [0,1]^l$ box, where $l$ is the problem dimension.
For PCBA, we use the stopping criteria in \S\ref{sec:constrained_BA}.
To choose $\epsilon$, we first compute the patch $B(\uu)$, then set
\begin{align}\label{eq:benchmarking_epsilon}
    \epsilon = (10^{-7})\cdot\left(\max B(\uu) - \min B(\uu)\right).
\end{align}
We set the maximum number of iterations to $N = 28$.
We do not set $\delta$, which determines the minimum number of iterations; $\delta$ is only needed to prove complexity bounds in \S\ref{sec:complexity_analysis}.

BSOS \cite[\S 4]{lasserre2017bounded} requires the user to specify the size of the semidefinite matrix associated with the convex relaxation of the POP.
This is done by by selecting a pair of parameters, $d$ and $k$ (note these are different from our use of $d$ and $k$). 
Though one has to increase $d$ and $k$ gradually to ensure convergence, larger values of $d$ and $k$ correspond to larger semidefinite programs, which can be difficult to solve.
We chose $d$ and $k$ separately for the Benchmark Evaluation.
We used $d = k = 2$ for the Increasing Number of Constraints.

For \fmincon~\cite{MatlabOTB}, we set the \texttt{OptimalityTolerance} option to $\epsilon$ in \eqref{eq:benchmarking_epsilon}.
We set $\texttt{MaxFunctionEvaluations} = 10^5$ and $\texttt{MaxIterations} = 10^4$.
We also provide \fmincon\ with the analytic gradients of the cost and constraints.

\subsection{Benchmark Evaluation}

\subsubsection{Setup} We tested PCBA, BSOS, and \fmincon\ on eight benchmark POPs \cite{nataraj2011constrained}, listed as P1 through P8; the problems are reported in the Appendix.
We ran each solver 50 times on each problem, and report the median solution error and time required to find a solution.
Since {\fmincon} may or may not converge to the global optimum depending on its initial guess, we used random initial guesses for each of the 50 attempts.

\subsubsection{Results}
The results are summarized in Table \ref{tab:benchmark_results}.
For additional results (e.g., to plot the results of any of the problems), see the \href{https://github.com/ramvasudevan/GlobOptBernstein}{\underline{\blue{GitHub repository}}}.

In terms of solution quality, PCBA always found the solution to within the desired optimality tolerance $\epsilon$, except for on P1, where PCBA stopped at the maximum allowed number of iterations (28); PCBA always used between 22 and 28 iterations.
BSOS always found a lower bound to the solution, as expected.
While {\fmincon} converged to the global optimum at least once on every problem, it often converged to local minima, hence the large error values on some problems.
In terms of solve time, \fmincon\ solves the fastest (in 10--20 ms), PCBA is about twice as slow as \fmincon, and BSOS is one to two orders of magnitude slower than PCBA.

For PCBA, the memory usage (computed with \eqref{eq:memory_per_item}) increases roughly by one order of magnitude for each additional dimension of the decision variable increases (Table \ref{tab:benchmark_results} reports the peak GPU memory used by PCBA on each benchmark problem).
Notice that PCBA never uses more than several MB of GPU memory, which is much less than the 11 GB available on the Nvidia GTX 1080 Ti GPU.
Figure \ref{fig:num_patches_vs_num_iterations} shows the number of patches and the amount of GPU memory used on P4.
We see that the memory usage peaks, then stays below a constant, as predicted by Theorem \ref{thm:memory_usage:constrained}.

\begin{figure}[t]
    \centering
    \includegraphics[width=\columnwidth]{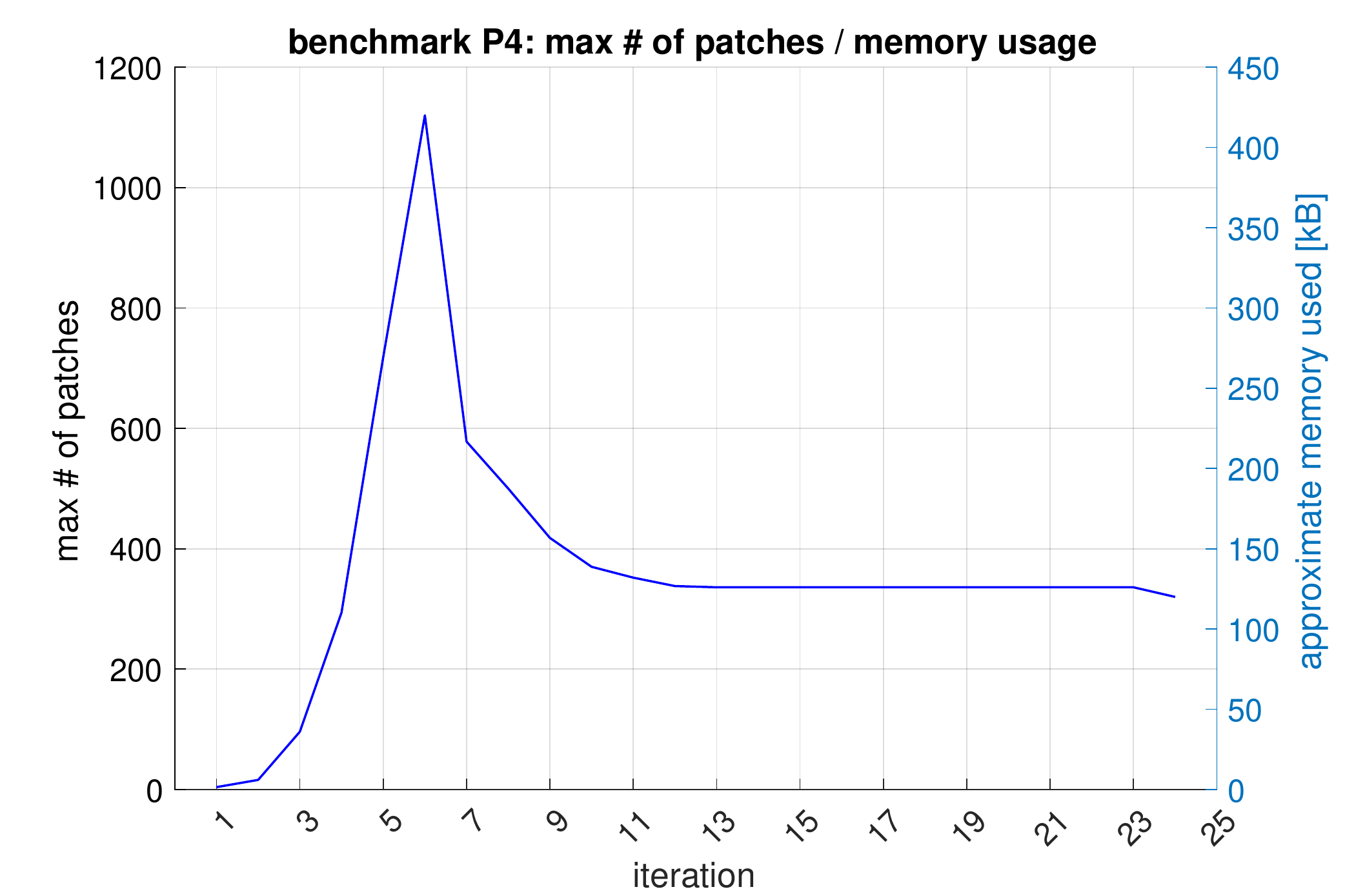}
    \caption{The maximum number of patches (left axis) and corresponding GPU memory used (right axis) at each iteration of PCBA, for P4 of the benchmark problems (see \S\ref{sec:benchmarking_PCBA}).
    This problem took 24 iterations to solve.
    Notice that the number of patches peaks in iteration 5, then stays under 400 patches at every iteration from iteration 9 onwards; this visualizes Theorem \ref{thm:memory_usage:constrained}.}
    \label{fig:num_patches_vs_num_iterations}
\end{figure}

\begin{table*}[t]
\centering
\begin{tabular}{cr|r|r|r|c|r|r|r|r|r|r}
\multirow{2}{*}{} & & \multicolumn{5}{c|}{PCBA} & \multicolumn{3}{c|}{BSOS \cite{lasserre2017bounded}} & \multicolumn{2}{c}{{\fmincon} \cite{MatlabOTB}} \\ \cline{3-12} 
 \multicolumn{1}{c|}{} & \multicolumn{1}{c|}{$l$} &\multicolumn{1}{c|}{$\epsilon$}& \multicolumn{1}{c|}{error} & \multicolumn{1}{c|}{time {[}s{]}} & iterations & memory & $(d,k)$ & \multicolumn{1}{c|}{error} & \multicolumn{1}{c|}{time {[}s{]}} & \multicolumn{1}{c|}{error} & \multicolumn{1}{c}{time {[}s{]}} \\ \hline
\multicolumn{1}{c|}{P1} & \multicolumn{1}{c|}{2} & 7.0000e-07&7.9002e-07 & 0.0141& 28 & 23 kB &(3,3) & -0.0068 & 1.2264 & \multicolumn{1}{r|}{1.0880} & 0.0083 \\
\multicolumn{1}{c|}{P2}& \multicolumn{1}{c|}{2} &0.1369 &0.0731& 0.0131 & 26 & 60 kB &(2,2) & -3.4994e-04 & 0.7671 & \multicolumn{1}{r|}{ -0.4447} & 0.0180 \\
\multicolumn{1}{c|}{P3}& \multicolumn{1}{c|}{2} &2.0000e-06 &1.9879e-06 & 0.0128 & 26 & 57 kB & (2,2) & -3.2747 &0.5065 & \multicolumn{1}{r|}{-3.0000} & 0.0220 \\
\multicolumn{1}{c|}{P4}& \multicolumn{1}{c|}{3} &1.7000e-06 &6.5565e-07 & 0.0204 & 24 & 416 kB &(4,4) & -0.9455 & 6.6546 & \multicolumn{1}{r|}{3.2000e-05} & 0.0130 \\
\multicolumn{1}{c|}{P5}& \multicolumn{1}{c|}{3} &1.0000e-06 &0 & 0.0312 & 28 & 3 MB &(2,2) & -4.4858e-07 & 0.8462 & \multicolumn{1}{r|}{7.9985e-06} &0.0062\\
\multicolumn{1}{c|}{P6}& \multicolumn{1}{c|}{4} &4.2677e-04 &1.8685e-04 & 0.0315 & 23 & 2 MB & (3,3) & -36.6179 & 6.2434 & \multicolumn{1}{r|}{ 0.0040} & 0.0065\\
\multicolumn{1}{c|}{P7} & \multicolumn{1}{c|}{4} &5.0000e-07 &2.5280e-07 &0.0374& 26 & 282 kB &(1,1) & -1.0899 & 0.1839 & \multicolumn{1}{r|}{ 2.4002e-07} &0.0139\\
\multicolumn{1}{c|}{P8} & \multicolumn{1}{c|}{4} &1.3445e-04 &6.7803e-05 & 0.0440 & 22 & 6 MB & (2,2) & -3.2521 & 1.8295 & \multicolumn{1}{r|}{9.9989e-04} &  0.0169
\end{tabular}
\caption{
Results for PCBA, BSOS, and {\fmincon} on eight benchmark problems with 2, 3, and 4 dimensional (column $l$) decision variables (see the Appendix for more details).
The error columns report each solver's result minus the true global minimum.
For all three solvers, the reported error and time to find a solution are the median over 50 trials (with random initial guesses for \fmincon).
For PCBA, we also report the optimality tolerance $\epsilon$ (as in \eqref{eq:benchmarking_epsilon}), number of iterations to convergence, and peak GPU memory used.
Note that, on P1 and P5, PCBA stopped at the maximum number of iterations (28).}
\label{tab:benchmark_results}
\vspace*{-0.5cm}
\end{table*}

\subsection{Increasing Constraint Problems}
\label{subsec:example:constraints}

Next, we tested each solver on problems with an increasing number of constraints, to each solver for use with RTD; we find in practice that a robot running RTD must handle between 30 and 300 constraints at each planning iteration.

\subsubsection{Setup}
We first choose an objective function with either many local minima or a nearly flat gradient near the global optimum (the global optimizer is known for each function).
In particular, we tested on the ElAttar-Vidyasagar-Dutta, Powell, Wood, Dixon-Price (with $l = 2, 3, 4$), Beale, Bukin02, and Deckkers-Aarts problems (see the Appendix and \cite{gavana2019testsuite}).

For each objective function, we generate 200 random constraints in total, while ensuring at least one global optimizer stays feasible (if there are multiple global optimizers, we choose one at random that will be feasible for all constraints).
To generate a single constraint $g: \uu \to \R$, we first create a polynomial $g_{\regtext{temp}}$ as a sum of the monomials of the decision variable with maximum degree 2, with random coefficients in the range $[-5,5]$.
To ensure $x^*$ is feasible, we evaluate $g_{\regtext{temp}}$ on $x^*$, then subtract the resulting value from $g_{\regtext{temp}}$ to produce $g$ (i.e., $g \leftarrow g_{\regtext{temp}} - g_{\regtext{temp}}(x^*)$).

We run PCBA, BSOS, and \fmincon\ on each objective function for 20 trials, with 10 random constraints in the first trial, and adding 10 constraints in each trial.
As before, we run \fmincon\ 50 times for each trial with random initial guesses, since its performance is dependent upon the initial guess.

\subsubsection{Results}
To illustrate the results, data for the Powell objective function are shown in Figure \ref{fig:increasing_constraint_results}.
The data (and plots) for the other objective functions are available in the \href{https://github.com/ramvasudevan/GlobOptBernstein}{\underline{\blue{GitHub repository}}}

In terms of solution quality, all three algorithms converge to the global optimum often when the number of constraints is low, but {\fmincon} converges to suboptimal solutions more frequently as the number of constraints increases.
PCBA and BSOS are always able to find the optimal solution.
PCBA is always able to find the global optimum regardless of the number of constraints, unlike BSOS (which runs out of memory) or {\fmincon} (which converges to local minima).

All three solvers require an increasing amount of solve time as the number of constraints increases.
PCBA is comparable in speed to {\fmincon} on 2-D problems, but is typically slower on higher-dimensional problems.
Regardless of the number of constraints, BSOS takes three to four orders of magnitude more time to solve than PCBA or {\fmincon}.

More details on PCBA are presented in Table \ref{tab:increasing_constraints}.
PCBA's time to find a solution increases roughly by an order of magnitude when the decision variable dimension increases by 1; however, PCBA solves all of the increasing constraint POPs within 0.5 s.
The memory usage increases by 1--3 orders of magnitude with each additional dimension; however, PCBA never uses more than 650 MB of GPU memory, well below the 11 GB available.
Figure \ref{fig:memory_vs_num_constraints} shows PCBA's GPU memory usage versus the number of constraints for the Powell objective function.

\subsection{Summary}
As expected from the complexity bounds in \S\ref{sec:complexity_analysis:constrained}, the results in this section indicate that PCBA is practical for quickly solving 2-D POPs with hundreds of constraints.
We leverage this next by applying PCBA to solve RTD's POP \eqref{prog:online_trajopt} for real-time receding-horizon trajectory optimization.

\begin{figure}[t]
    \centering
    \begin{subfigure}[t]{\columnwidth}
        \centering
        \includegraphics[width=\columnwidth]{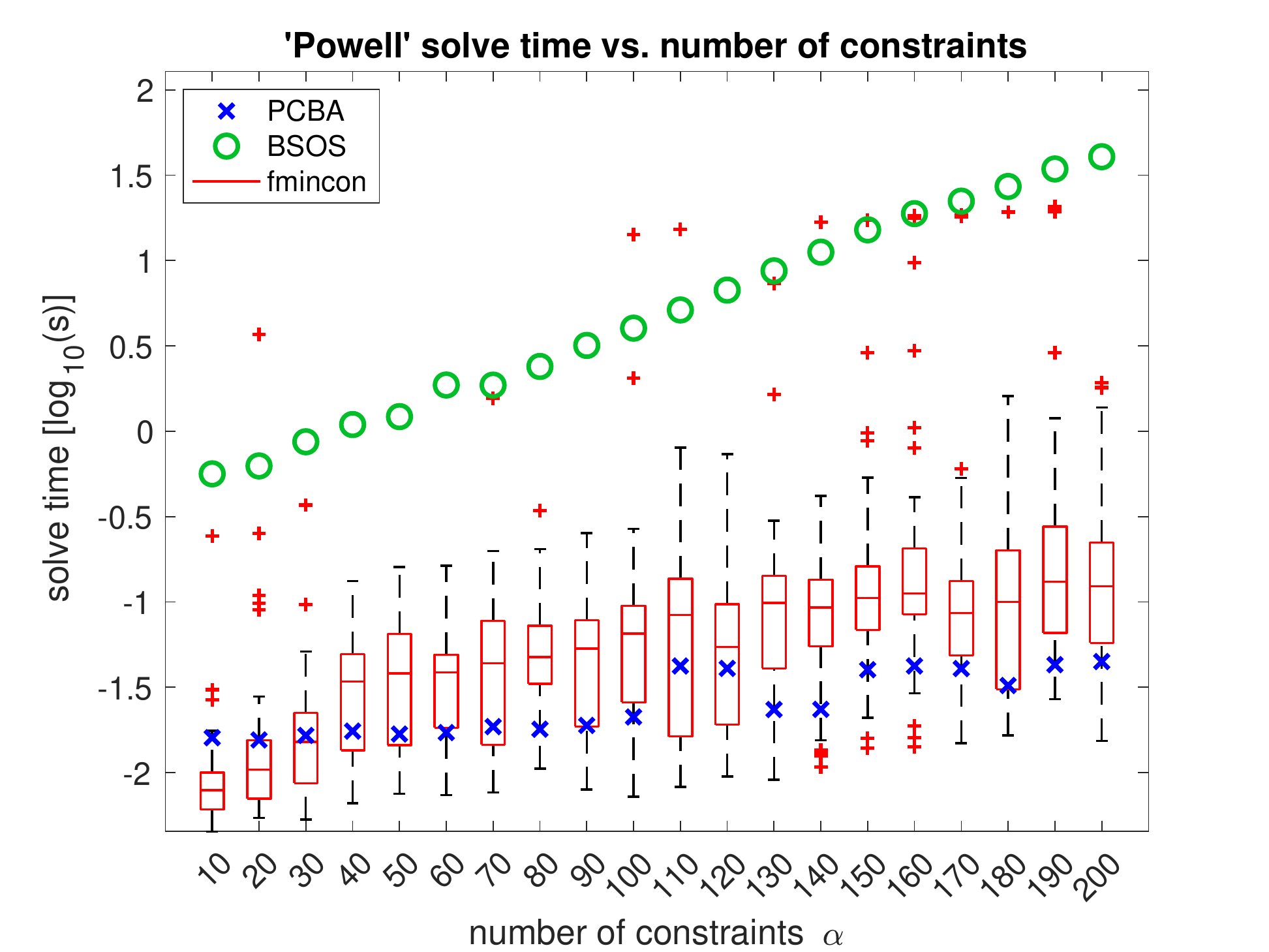}
    \end{subfigure}
    
    \begin{subfigure}[t]{\columnwidth}
        \centering
        \includegraphics[width=\columnwidth]{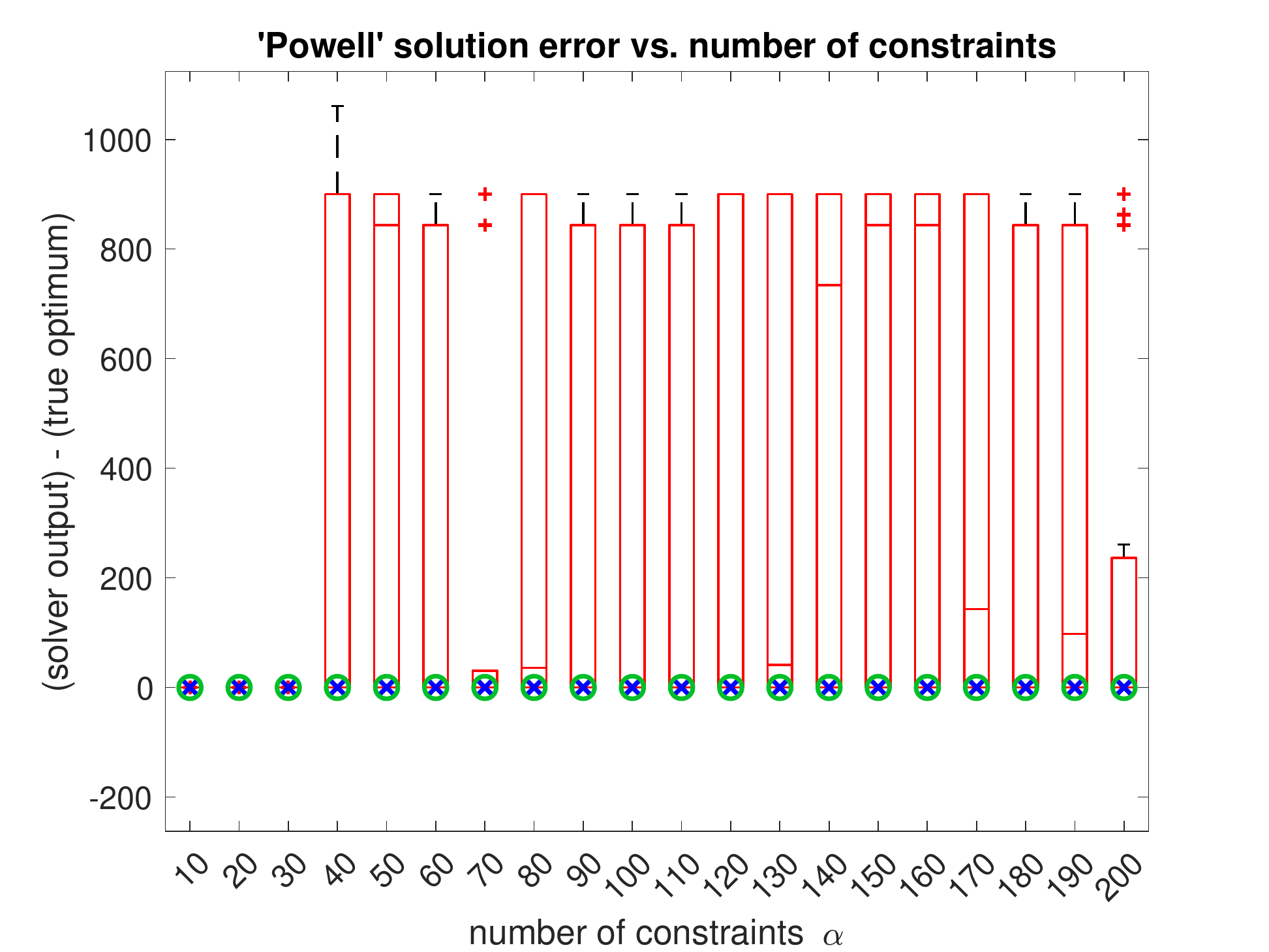}
    \end{subfigure}
    \caption{
    Results for an increasing number of constraints on the Powell objective function (see the Appendix) for the PCBA, BSOS, and \fmincon\.
    The top plot shows the time required to solve the problem as the number of constraints increases.
    The bottom plot shows the error between each solver's solution and the true global optimum.
    For both time and error, {\fmincon} is shown as a box plot over 50 trials with random initial guesses; the central red line indicates the median, the top and bottom of the red box indicate the 25\ts{th} and 75\ts{th} percentiles, the black whiskers are the most extreme values not considered outliers, and the outliers are red plus signs.
    PCBA solves the fastest in general; \fmincon\ typically solves slightly slower than PCBA for more than 40 constraints; and BSOS is the slowest solver.
    PCBA and BSOS always find the global optimum, as does \fmincon\ when there are not many constraints, because the Powell objective function is convex.
    Above 30 constraints, \fmincon\ frequently has large error due to convergence to local minima.
    }
    \label{fig:increasing_constraint_results}
\end{figure}

\begin{figure}[ht]
    \centering
    \includegraphics[width=\columnwidth]{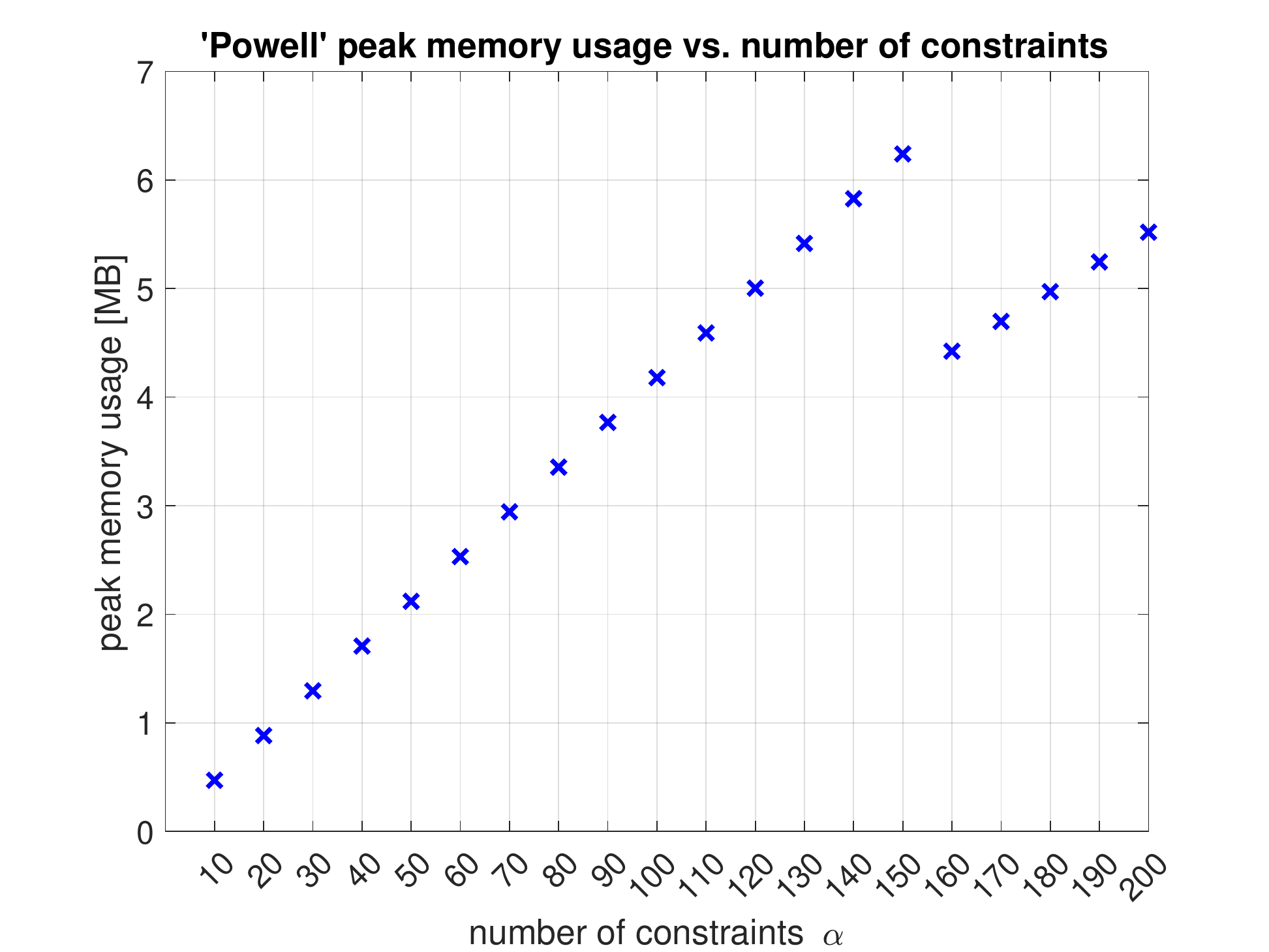}
    \caption{The approximate peak GPU memory used by PCBA for the Powell problem, as a function of the number of constraints.
    Since the amount of memory required per item in the list $\LL$ grows linearly with the number of constraints, the overall memory usage also grows linearly.
    However, at 160 constraints, we see a drop in the memory usage; this is because the additional constraints render more parts of the problem domain infeasible, resulting in more items being eliminated per PCBA iteration.
    Note that the maximum memory usage is well under the several GB of memory available on a typical GPU.}
    \label{fig:memory_vs_num_constraints}
\end{figure}

\begin{table}[ht]
\centering
\begin{tabular}{c|c|r|r|r}
\multicolumn{1}{l|}{} & $l$ & \multicolumn{1}{c|}{max time {[}s{]}} & \multicolumn{1}{c|}{max items} & \multicolumn{1}{c}{max memory} \\ \hline
E-V-D & 2 & 0.0360 & 66 & 171 kB \\
Powell & 2 & 0.0447 & 1200 & 6.24 MB \\
Wood & 2 & 0.0532 & 54 & 220 kB \\
D-P 2-D & 2 & 0.0259 & 90 & 433 kB \\
D-P 3-D & 3 & 0.0675 & 356 & 3.47 MB \\
D-P 4-D & 4 & 0.402 & 4994 & 193 MB \\
Beale & 2 & 0.0302 & 106 & 259 kB \\
Bukin02 & 4 & 0.393 & 10550 & 110 MB \\
D-A & 4 & 0.389 & 51886 & 647 MB
\end{tabular}
\caption{
Results for the increasing constraints PCBA evaluation.
Abbreviated problem names (as in the Appendix) are on the left, along with each problem's decision variable dimension $l$.
Over all 20 trials (with between 10 and 200 constraints), we report the maximum time spent find a solution, the maximum number of items in the list $\LL$, and the maximum amount of GPU memory used.
Note that the problems all solved under 0.5 s regardless of the number of constraints, and no problem requested more than 650 MB of memory.}
\label{tab:increasing_constraints}
\end{table}
\section{Hardware Demonstrations}\label{sec:hardware_demo}

Recall from \S\ref{sec:preliminaries:RTD} that RTD enables real-time, provably collision-free trajectory planning via solving a POP every planning iteration.
RTD is provably collision-free regardless of the POP solver used, meaning that we are able to test PCBA safely on hardware; when PCBA is applied to RTD, we call the resulting trajectory planning algorithm \textbf{RTD*}.
See Figure \ref{fig:time_lapse} and the \href{https://youtu.be/YcH4WAzqPFY}{\blue{\underline{video}} (click for link)}.

In this section, we apply RTD* to a Segway robot navigating a hallway with static obstacles.
Recall that RTD always produces \emph{dynamically feasible} trajectory plans \cite{kousik2018bridging}.
As proven in Corollary \ref{cor:cut-off} and demonstrated in \S\ref{sec:benchmarking_PCBA}, PCBA always finds \emph{optimal} solutions.
This section shows that PCBA/RTD* improves the \emph{liveness} of a robot by successfully navigating a variety of scenarios, and outperforming \fmincon/RTD.

\subsection{Overview}
\subsubsection{Demonstrations}
We ran two demonstrations in a $20 \times 3$ m$^2$ hallway.
In the first demonstration, we filled the hallway with random static obstacles and ran RTD*.
In the second demonstration, we constructed two difficult scenarios and ran bot PCBA/RTD* and \fmincon/RTD on each.

\subsubsection{Hardware}
We use a Segway differential-drive robot with a planar Hokuyo UTM-30LX LIDAR for mapping and obstacle detection (see Figure \ref{fig:time_lapse}).
Mapping, localization, and trajectory optimization run onboard, on a 4.0 GHz laptop with an Nvidia GeForce GTX 1080 GPU.

\subsubsection{POPs}
As in \S\ref{sec:preliminaries:RTD} (also see Figure \ref{fig:time_lapse}), the robot plans by optimizing over a set $Q \subset \R^2$ of parameterized trajectories as in \cite[(16)]{kousik2018bridging}.
The parameters are speed $q_1 \in [0, 1]$ m/s and yaw rate $q_2 \in [-1, 1]$ rad/s, so $Q = [0,1]\times[-1,1]$.
The trajectories are between 1 and 2 s long, depending on the robot's initial speed (since each trajectory includes a braking maneuver).
The robot must find a new plan (i.e., PCBA must solve a new POP) every 0.5 s, or else it begins executing the braking maneuver associated with its previously-computed plan \cite[Remark 70]{kousik2018bridging}.
In other words, we require PCBA to return a feasible solution or that the problem is infeasible.
PCBA is given a time limit of 0.4 s to find a solution, because the robot requires 0.1 s for other onboard processes.

At each planning iteration, obstacles are converted into constraints as in \cite[Sections 6 and 7]{kousik2018bridging}; this produces a list of $\Nineq \in \N$ inequality constraints, which we denote $w(x_1,\cdot), \cdots, w(x_\Nineq,\cdot): Q \to \R$.
In practice, $30 \leq \Nineq \leq 300$ (see Figure \ref{fig:number_of_constraints_per_trial}).

The objective function is constructed at each planning iteration as follows.
Offline, we represent the endpoint of any parameterized trajectory as a degree 10 polynomial $x: Q \to X$.
At runtime, we generate $N_{\regtext{wp}} \in \N$ waypoints (i.e., desired locations for the robot to reach), denoted $\{x_n\}_{n = 1}^{N_{\regtext{wp}}}$.
We then create the following POP:
\begin{align}\label{prog:online_trajopt_POP}
    \begin{array}{cl}
         \underset{q\,\in\,Q}{\regtext{argmin}} & \prod_{n=1}^{N_{\regtext{wp}}}\left(\norm{x(q) - x_n}_2^2\right) \\
         \regtext{s.t} & w(x_i,q) \leq 0~\forall~i = 1,\cdots,\Nineq.
    \end{array}
\end{align}
The objective function is degree $10\cdot N_{\regtext{wp}}$, and the constraints are each degree 12.
Furthermore, ignoring the constraints, the objective function requires the POP solver to choose between as many global minima as there are waypoints.

\subsection{Demo 1}

The first demo shows the ability of the RTD* to plan safe trajectories in randomly-generated scenarios in real time, demonstrating \emph{dynamic feasibility}, \emph{optimality}, and \emph{liveness}.

\subsubsection{Setup}
The robot was required to move autonomously back and forth ten times between two global goal locations spaced $12$ m apart, while ($30$ cm)\ts{3} box-shaped obstacles were randomly placed in its path.
At each planning iteration, we generated $N_{\regtext{wp}} = 2$ waypoints, $x_{\regtext{L}}$ and $x_{\regtext{R}}$, both 1.5 m ahead of the robot in the direction of the global goal; $x_{\regtext{L}}$ is on the left side of the hallway relative to the robot, and $x_{\regtext{R}}$ is on the right.

After running the robot with RTD*, we ran {\fmincon} on the 528 saved POPs generated during these 10 trials (we do not run BSOS due to its slow solve time).
Each POP has between 49 and 245 constraints (see Figure \ref{fig:number_of_constraints_per_trial}).
For each POP, we initialized {\fmincon} with 25 random initial guesses, and did not require {\fmincon} to solve within 0.4 s (i.e., we did not enforce the real time planning constraint).
To understand the timing of PCBA and for fair comparison with \fmincon, we re-ran PCBA 25 times on each trial and did not require it to solve in real time.

\subsubsection{Results}
The robot running RTD* successfully completed every trial (meaning that it reached the global desired goal location without collisions, and without human assistance).

When re-running on the saved POPs, \fmincon\ performs nearly as well as PCBA in terms of finding solutions.
Out of all $25\times528$ attempts in which \fmincon\ converged to a feasible solution, \fmincon\ converged to a greater cost than PCBA 93.9\% of the time (recall that PCBA provably upper-bounds the optimal solution); however, the \fmincon\ solution was only 0.77\% greater in cost than the PCBA solution on average, indicating that \fmincon\ was often able to find a global optimum when given enough attempts.
In terms of feasibility, PCBA and \fmincon\ also show similar results.
PCBA reports that 7.01\% of the POPs are infeasible, whereas \fmincon\ converged to an infeasible result on 8.08\% of the $25\times 528$ total attempts.
Note that, on 14.2\% of the 528 POPs, \fmincon\ converged to an infeasible result least once out of 25 attempts.

Where \fmincon\ suffers with respect to PCBA is in its consistency of finding an answer within the time limit (see Figure \ref{fig:pcba_vs_fmincon_solve_time}).
While \fmincon\ is often able to solve in $10^{-2}$ s (an order of magnitude faster than PCBA), it has a standard deviation of up to $10$ s.
On the other hand, PCBA always finds a solution or returns infeasible within 0.4 s, and has a standard deviation of 2.4 ms on average over all $25\times528$ POPs.
To summarize: as we expect from the theory in \S\ref{sec:complexity_analysis}, PCBA's solve time in practice appears constant \emph{on real trajectory optimization problems}.

\subsection{Demo 2}

The second demo shows that RTD* can navigate difficult scenarios because PCBA is able to rapidly solve POPs with hundreds of constraints.
Recall that, in any planning iteration, RTD and RTD* command the robot to begin braking if they cannot find a new trajectory plan (i.e., solve \eqref{eq:POP}).
By \emph{difficult} scenarios, we mean that the obstacles are arranged to cause the robot to have to brake often.
Therefore, by RTD*'s successful navigation of these scenarios, we demonstrate \emph{liveness}.

\subsubsection{Setup}
The robot was required to navigate two difficult scenarios autonomously.
In the first scenario, static obstacles were arranged to force the robot to turn frequently, and to decide to go left or right around each obstacle.
In the second scenario, the robot was required to navigate a tight obstacle blockade.
For each scenario, we ran PCBA/RTD* and \fmincon/RTD once each.
At each planning iteration, we generate $N_{\regtext{wp}} = 1$ waypoint positioned 1.5 m away from the robot along a straight line to the global goal; this produces a convex cost function for \eqref{prog:online_trajopt_POP}, but the constraints make the problem nonconvex.

\subsubsection{Results}
In the first scenario, both RTD* and RTD are able to successfully navigate the scenario.
Recall that the robot begins emergency braking when it does not find a feasible trajectory in a planning iteration.
RTD* brakes 6 times, whereas RTD brakes 13 times; furthermore, RTD* only takes 27 s to navigate to the goal, whereas RTD takes 43 s.
In other words, PCBA/RTD* is half as conservative as \fmincon/RTD, because it finds feasible solutions more frequently.

The results of the second scenario confirm that RTD* is less conservative than RTD.
In this scenario, RTD* is able to navigate the entire scenario autonomously without human assistance, whereas RTD causes the robot to become stuck (see Fig. \ref{fig:fmincon_stuck}), and a human operator must drive the robot for a short time to enable to it to continue moving autonomously.

\begin{figure}
    \centering
    \includegraphics[width=\columnwidth]{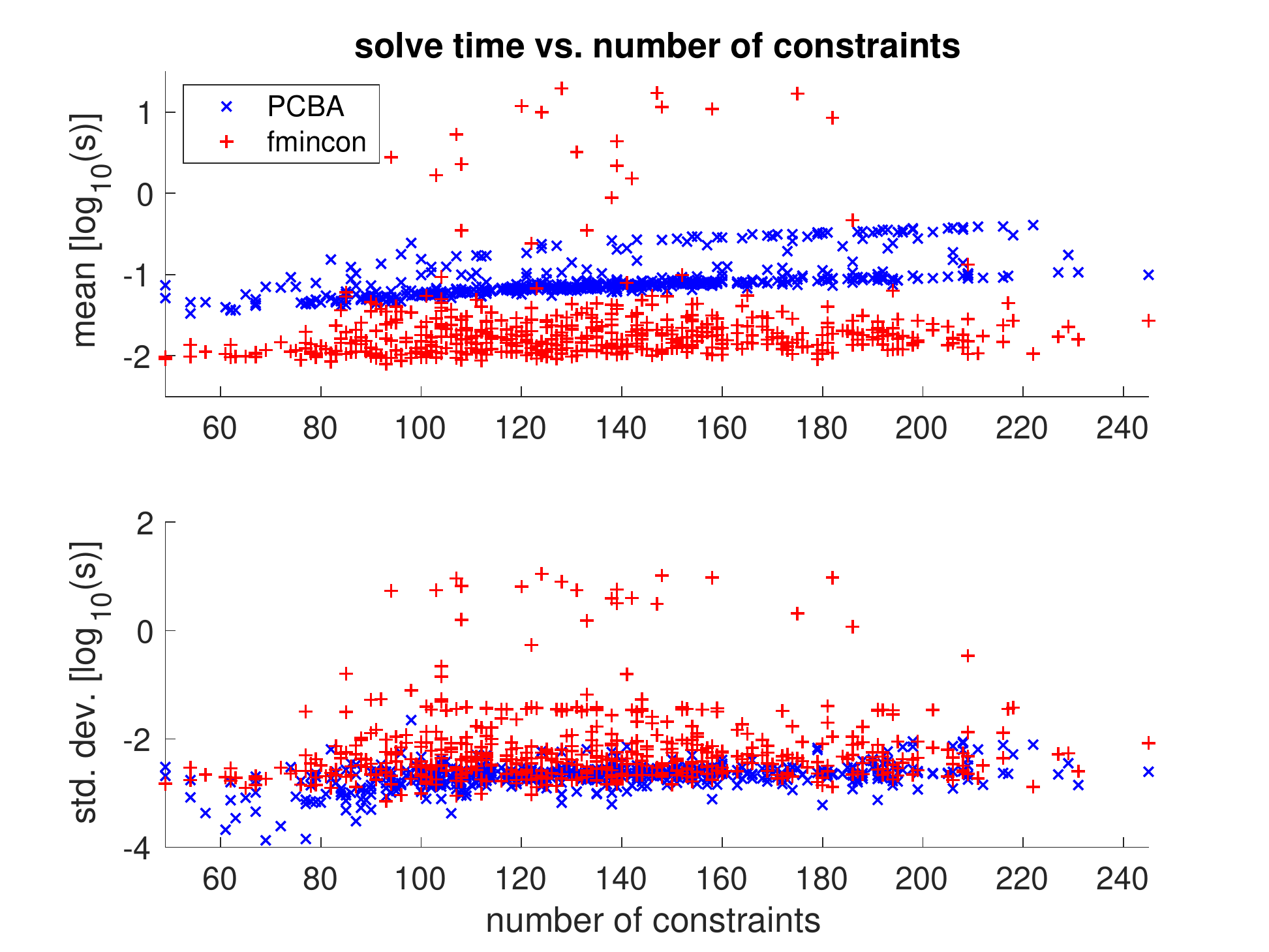}
    \caption{Solve times of PCBA and \fmincon\ on $528$ POPs generated by the Segway robot navigating random scenarios in Demo 1.
    Each POP was solved 25 times by each solver.
    While \fmincon\ can often find a solution an order of magnitude faster than PCBA, it also has a much higher standard deviation, meaning that it is less consistent at obeying the real-time limit required by mobile robot trajectory planning.}
    \label{fig:pcba_vs_fmincon_solve_time}
\end{figure}

\begin{figure}
    \centering
    \includegraphics[width=0.9\columnwidth]{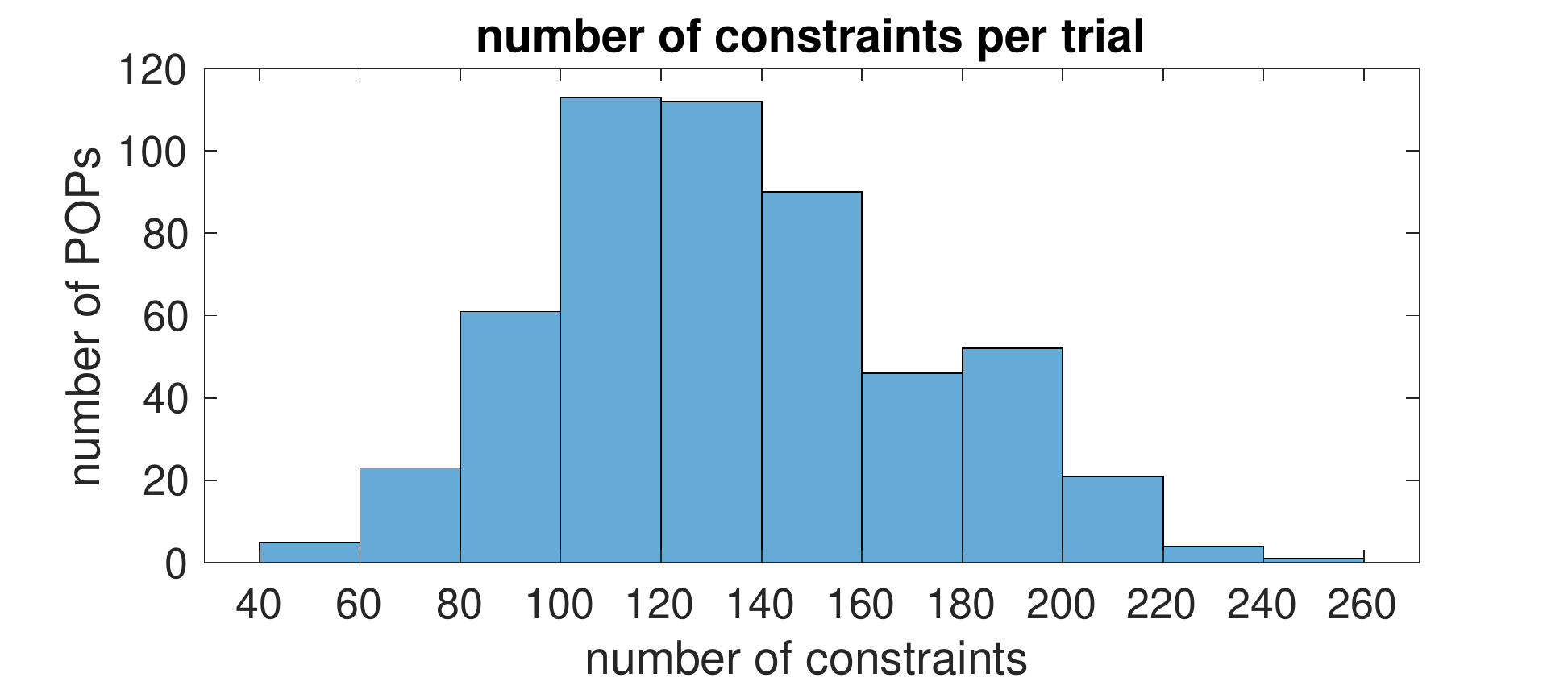}
    \caption{The number of POPs from Demo 1, out of $528$, that fall into the given bins of number of constraints; we see that most of the POPs had 100 -- 140 constraints.
    This number of constraints can makes it challenging to solve a POP while constrained by a real-time planning limit.}
    \label{fig:number_of_constraints_per_trial}
\end{figure}

\subsection{Discussion}
We have demonstrated that RTD* is capable of \emph{dynamic feasibility}, \emph{optimality}, and \emph{liveness} for online trajectory optimization on robot hardware\textbf{}.
RTD* is able to find an optimal solution, if it exists, at every receding-horizon planning iteration, leading to it consistently navigating random scenarios without collisions.
Furthermore, RTD* outperforms RTD at the same navigation and obstacle-avoidance tasks.
This performance increase is due to PCBA's ability to find solutions more quickly than \fmincon\ on problems with hundreds of constraints.
To the best of our knowledge, this is the first time any Bernstein algorithm has been demonstrated as practical for a real-time mobile robotics application.

\begin{figure}
    \centering
    \includegraphics[width=\columnwidth]{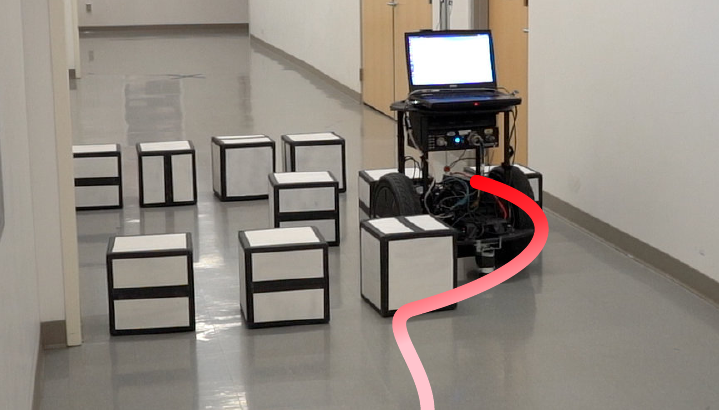}
    \caption{The robot becomes stuck when planning with RTD/\fmincon\ in the second scene of the second hardware demo, because \fmincon\ cannot find an optimal solution quickly enough given the high number of constraints produced by the surrounding obstacles.
    The robot requires human assistance to proceed, whereas it is able to navigate the entire scene autonomously when planning with RTD*/PCBA (Figure \ref{fig:time_lapse}).
    See the \href{https://youtu.be/YcH4WAzqPFY}{\blue{\underline{video}}}.}
    \label{fig:fmincon_stuck}
\end{figure}
\section{Conclusion}\label{sec:conclusion}

Mobile robots typically use receding-horizon planning to move through the world.
Plans should be dynamically feasible and optimal, but also need to be generated quickly, otherwise the robot may stop frequently and never complete a user-specified task.
We propose and implement a Parallel Constrained Bernstein Algorithm (PCBA) to rapidly generate provably dynamically-feasible and optimal plans.
PCBA takes advantage of the polynomial optimization problem (POP) structure used by the existing Reachability-based Trajectory Design (RTD) method; but, where RTD uses a gradient-based nonlinear solver that cannot guarantee optimality or real-time performance, we use PCBA.
The resulting algorithm, RTD*, is shown to outperform RTD on a variety of hardware demonstrations.
To the best of our knowledge, this is the first time a Bernstein Algorithm has been used for real-time mobile robotics.
Furthermore, PCBA outperforms the Bounded Sums-of-Squares (BSOS) and \fmincon\ solvers on a variety of benchmark POPs.
For future work, we plan to explore non-polynomial optimization problems and to improve the proposed time and space complexity bounds of PCBA; our goal is to apply RTD* and PCBA to more types of robots.

\renewcommand{\bibfont}{\normalfont\small}
{\renewcommand{\markboth}[2]{}
\printbibliography}

\begin{appendices}
\section{Proofs}\label{app:proofs}

\subsection{Theorem \ref{thm:uncon:roc2} (Unconstrained Rate of Convergence)}

\begin{proof}
The first term in \eqref{eq:max_iters_glob_min_not_on_bdry} follows from Theorem \ref{thm:roc:unconstrained} and guarantees $|\x| \leq \delta$.
To prove $\max B_p(\x) - \min B_p(\y) \leq \epsilon$ for all $\y \in \LL$ using the second and third terms, we need to define $C_1$ and $C_2$.
First, we use the fact that $x^*$ is not on the boundary of $\uu$ to find $R > 0$ such that $\x \subset \B_R(x^*)$ (the closed Euclidean-norm ball centered at $x^*$ with radius $R$).
Using the first-order necessary condition for optimality \cite[Theorem~2.2]{nocedal2006numerical} we know $\nabla p(x^*) = 0$.
Let $\sigma_{\max}$ be the maximum singular value of $\nabla^2 p(x^*)$.
Consider an arbitrary point $x \in \uu$ and let $t := x - x^*$.
Using Taylor's theorem, we have
\begin{equation}
\label{eq:uncon:taylor}
    p(x) = p(x^*) + \nabla p(x^*)^\top \, t + \frac{1}{2} t^\top \nabla^2 p(x^*) \, t + o(\|t\|^2)
\end{equation}
Since $\nabla p(x^*) = 0$, \eqref{eq:uncon:taylor} implies that $p(x) - p(x^*) \leq \frac{1}{2} \sigma_{\max} \|t\|^2 + o(\|t\|^2)$.
For sufficiently small $t$, the second order term dominates higher order terms.
That is, there exists $R > 0$ such that
\begin{equation}
\label{eq:uncon:approx}
    p(x) - p(x^*) \leq \sigma_{\max} \|t\|^2
\end{equation}
for all $x \in \B_R(x^*)$.

Now we use $R$ and $\sigma_{\max}$ to construct $C_1$ and $C_2$.
Let $n$ be the current iteration number such that
\begin{equation}
\label{eq:uncon:req}
    n \geq \left\lceil \max \left\{ -\log_2 \left( \frac{R}{\sqrt{l}} \right), -\frac{1}{2} \log_2 \left( \frac{\epsilon}{\sigma_{\max} l + 2\zeta_p} \right)  \right\} \right\rceil.
\end{equation}
Notice $\x \subset \B_R(x^*)$ because $\max_{x \in \x} \|x - x^*\| \leq |\x| = \sqrt{l} \cdot 2^{-n} \leq R$.
Therefore, \eqref{eq:uncon:approx} is satisfied for all $x \in \x$.
We also have
\begin{align}
    \max B(\x) - \min B(\y) &\leq \max_{x \in \x}p(x) - p(x^*) + 2\zeta_p \cdot 2^{-2n} \label{eq:uncon:arg1_5:1} \\
    &\leq \sigma_{\max} \max_{x \in \x} \|x - x^*\|^2 + 2 \zeta_p \cdot 2^{-2n} \label{eq:uncon:arg1_5:2}\\
    &\leq \sigma_{\max} l \cdot 2^{-2n} + 2 \zeta_p \cdot 2^{-2n} \label{eq:uncon:arg1_5:3}\\
    &\leq \epsilon \label{eq:uncon:arg1_5:4}
\end{align}
where 
\eqref{eq:uncon:arg1_5:1} is a verbatim copy of \eqref{eq:uncon:arg3};
\eqref{eq:uncon:arg1_5:2} follows from \eqref{eq:uncon:approx};
\eqref{eq:uncon:arg1_5:3} follows from Remark \ref{rem:subdiv};
and \eqref{eq:uncon:arg1_5:4} follows from \eqref{eq:uncon:req}.
This implies Algorithm \ref{alg:pcba} terminates no later than iteration $n$.
To conclude the proof, we define
\begin{equation}
    C_1 := -\log_2 \left( \frac{R}{\sqrt{l}} \right), \quad
    C_2 := \frac{1}{2} \log_2 \left( \sigma_{\max} l + 2 \zeta_p \right)
\end{equation}
\end{proof}

\subsection{Theorem \ref{thm:uncon:memory} (Unconstrained Memory Usage)}

\begin{proof}
Since $m < \infty$,
let $\sigma_{\max}$ and $\sigma_{\min}$ be the maximum and minimum singular values, respectively, of $\nabla^2 p(x^*_s)$ over all $s = 1, \cdots, m$.
Notice $\sigma_{\max} \geq \sigma_{\min} > 0$.
Using similar arguments in the proof of Theorem \ref{thm:uncon:roc2}, an analogue of \eqref{eq:uncon:approx} can be obtained.
That is, there exists $R>0$ such that
\begin{equation}
\label{eq:uncon:approx2}
    \frac{1}{4} \sigma_{\min} \|x_s - x_s^*\|^2 \leq p(x_s) - p(x_s^*) \leq \sigma_{\max} \|x_s - x^*_s\|^2
\end{equation}
for all $s = 1, \cdots, m$ and $x_s \in \B_R(x_s^*)$.

Without loss of generality, let $x^*$ represent any minimizer $x_1^*, \cdots, x_m^*$ of \eqref{eq:POP_unconstrained}.
Let $\y \subset \B_R(x^*)$ be a subbox in $\uu$ such that
\begin{equation}
\label{eq:uncon:y}
    \min_{y \in \y} \|y - x^*\| > \sqrt{\frac{4 (\sigma_{\max} l + 2\zeta_p)}{\sigma_{\min}}} \cdot 2^{-n} =: R'.
\end{equation}
We claim that $\y$ cannot be in the list $\LL$.
To prove this, let $\x \subset \B_R(x^*)$ be the subbox that contains $x^*$.
Then
\begin{align}
    \min B(\y) &\geq \min_{y \in \y} p(y) - \zeta_p \cdot 2^{-2n} \label{eq:uncon:arg2:1} \\
    &\geq p(x^*) + \frac{1}{4} \sigma_{\min} \left( \min_{y \in \y} \|y - x^*\|^2 \right) - \zeta_p \cdot 2^{-2n} \label{eq:uncon:arg2:2} \\
    &> p(x^*) + \sigma_{\max} \cdot \left( \sqrt{l} \cdot 2^{-n} \right)^2 + \zeta_p \cdot 2^{-2n} \label{eq:uncon:arg2:3} \\
    &\geq \max_{x \in \B_{\sqrt{l} \cdot 2^{-n}}(x^*)} p(x) + \zeta_p \cdot 2^{-2n} \label{eq:uncon:arg2:4} \\
    &\geq \max_{x \in \x} p(x) + \zeta_p \cdot 2^{-2n} \label{eq:uncon:arg2:5} \\
    &\geq  \max B(\x) \label{eq:uncon:arg2:6}
\end{align}
where 
$\zeta_p$ is the constant in Theorem \ref{thm:err_bound} corresponding to polynomial $p$;
\eqref{eq:uncon:arg2:1} follows from Theorem \ref{thm:err_bound};
\eqref{eq:uncon:arg2:2} follows from \eqref{eq:uncon:approx2};
\eqref{eq:uncon:arg2:3} follows from \eqref{eq:uncon:y} and the strict monotonicity of quadratic functions restricted to non-negative numbers;
\eqref{eq:uncon:arg2:4} follows from \eqref{eq:uncon:approx2};
\eqref{eq:uncon:arg2:5} follows from Remark \ref{rem:subdiv};
\eqref{eq:uncon:arg2:6} follows from Corollary \ref{cor:err_bound}.
According to Theorem \ref{thm:cut-off}, such a subbox $\y$ cannot be in $\LL$.
In other words (by taking the contrapositive), the distance between $x^*$ and any subbox contained in $\B_R(x^*)$ cannot exceed $R'$, meaning $\min_{y \in \y} \norm{y - x^*} \leq R'\ \forall\ \y \in \LL \text{ and } \y \subset \B_R(x^*)$.
Using \eqref{eq:uncon:y}, we can choose a large enough $n$ such that $R' < R$.
Therefore, all subboxes contained in $\B_R(x^*)$ must also be contained in a hypercube centered at $x^*$ with widths $2R'+2\cdot 2^{-n}$. Since the maximum width of all subboxes is $2^{-n}$ (Remark \ref{rem:subdiv}), the number of subboxes contained in $\B_R(x^*)$ is bounded from above by
\begin{equation}
    \left( \frac{2 R' + 2 \cdot 2^{-n}}{2^{-n}} \right)^l 
    = \left( 4 \sqrt{\frac{(\sigma_{\max}l + 2\zeta_p)}{\sigma_{\min}}} + 2 \right)^l =: C_3.
\end{equation}
Since there are $m$ global minimizers, the total number of subboxes remaining in $\LL$ is bounded from above by $m \cdot C_3$, which is a constant.
\end{proof}

\subsection{Theorem \ref{thm:roc:constrained} (Constrained Rate of Convergence)}

\begin{proof}
Let $n \geq 1$ be the current iteration number.
To prove Algorithm \ref{alg:pcba} solves \eqref{eq:POP} in $n$\ts{th} iteration, we must show that there exists a  subbox $\x \in \LL$ that meets the stopping criterion in Definition \ref{def:tolerances}.
That is, $\x$ should satisfy
\begin{enumerate}[label=(\alph*),ref=(\alph*)]
    \item $\x \in \LL$, \label{cond:con:0}
    \item $|\x| \leq \delta$, \label{cond:con:1}
    \item $\max B_{gi}(\x) \leq 0$ for all $i = 1, \cdots, \Nineq$, \label{cond:con:2}
    \item $-\eeq \leq \min B_{hj}(\x) \leq 0 \leq \max B_{hj}(\x) \leq \eeq$ for all $j = 1, \cdots, \Neq$, and \label{cond:con:3}
    \item $\max B_p(\x) - \min B_p(\y) \leq \epsilon$ for all $\y \in \LL$. \label{cond:con:4}
\end{enumerate}
We define such a subbox $\x$ as a function of $n$ using the implicit function theorem, then construct $C_7, C_8$, and $C_9$ using these criteria.

Let $\aha$ ($\aha \leq \Nineq$) be the number of active inequality constraints at $x^*$.
Without loss of generality, suppose $g_1, \cdots, g_\aha$ are active.
Define
\begin{equation}
\label{eq:def:A}
    A :=
    \begin{bmatrix}
        \nabla g_1(x^*) &
        \cdots & \nabla g_{\aha}(x^*) &
        \nabla h_1(x^*) &
        \cdots & \nabla h_{\Neq}(x^*)
    \end{bmatrix}^\top
\end{equation}
It follows from LICQ that $A$ is full rank.
Let $Z$ be a matrix whose columns are a basis for the null space of $A$; that is,
\begin{align}
    Z \in \R^{l \times (l-\aha-\beta)}, \quad Z~\regtext{is full rank}, \quad AZ = 0^{(\aha+\beta) \times (l-\aha-\beta)}.
\end{align}
For convenience, define
\begin{align}
    \tilde A := 
    \begin{bmatrix}
    A \\ Z^\top
    \end{bmatrix}
\end{align}
where $\tilde A$ is a square matrix with full rank.
Define a parameterized system of equations $F: \R^l \times \R \to \R^l$ by
\begin{align}
\label{eq:ift}
    F(a,t) = 
    \begin{bmatrix}
        g_1(a) + (\zeta_{g1} + L_{g1}\sqrt{l}) \cdot t \\
        \vdots \\
        g_\aha (a) + (\zeta_{g\aha} + L_{g\aha}\sqrt{l}) \cdot t \\
        h_1(a) \\
        \vdots \\
        h_\Neq (a)\\
        Z^\top x
    \end{bmatrix} = 0^{l \times 1}
\end{align}
where $\zeta_{gi}$ is the constant in Theorem \ref{thm:err_bound} corresponding to polynomial $g_i$, and $L_{gi}$ is the Lipschitz constant of $g_i$ over $\uu$.
Notice in particular that $F(x^*,0) = 0$.
At $a = x^*$, $t = 0$, the Jacobian of $F$ with respect to $a$ is
\begin{align}
    \nabla_a F(x^*, 0) = \tilde A
\end{align}
which is nonsingular by construction of $Z$.
According to the implicit function theorem, the system \eqref{eq:ift} has a unique solution $x$ for all values of $t$ sufficiently small; that is,
there exists a number $R_1>0$ and a function $\xi: [-R_1,R_1] \to \R^{l}$, such that
\begin{align}
    \xi(0) = & x^* \\
    F(\xi(t),t) = & 0 \quad \forall\ t \in [-R_1, R_1]
\end{align}
In particular, such $R_1$ can be obtained \cite{chang2003analytic} using only the bounds of the polynomials $g_i$ and $h_j$ for all $i = 1, \cdots \aha$, $j = 1, \cdots, \Neq$.
We now let $t := 2^{-n}$.
Since $|t|$ can be arbitrarily small as $n$ is increased, satisfying $|t| \leq R$ requires
\begin{align}
\label{eq:con:req:1}
    n \geq -\log_2 R_1.
\end{align}
In the remainder of this proof we define 
$x := \xi (2^{-n})$; that is,
\begin{align}
\label{eq:ift:x}
    F(x,2^{-n}) = 0,
\end{align}
and denote the subbox that contains $x$ as $\x$.
We next show in Steps 1--5 that
such $\x$ satisfies Conditions \ref{cond:con:1}--\ref{cond:con:4} for $n \geq N$.
The case of $\x \not\in \LL$ is discussed in Step 6.

\emph{Step 1 (Condition \ref{cond:con:1}).}
Notice from Remark \ref{rem:subdiv} that
\begin{align}
    |\x| = 2^{-n} \leq 2^{-N} \leq \delta
\end{align}
therefore Condition \ref{cond:con:1} is satisfied.

\emph{Step 2 (Condition \ref{cond:con:2}).}
Notice that
\begin{align}
    \max B_{gi}(\x) &\leq \max_{x' \in \x} g_i(x') + \zeta_{gi} \cdot 2^{-2n} \label{eq:con:arg:1} \\
    &\leq g_i(x) + \max_{x' \in \x} g_i(x') - g_i(x) + \zeta_{gi} \cdot 2^{-2n} \label{eq:con:arg:2} \\
    &\leq -\zeta_{gi} \cdot 2^{-n} - L_{gi} \sqrt{l} \cdot 2^{-n} + L_{gi} \max_{x' \in \x} \|x' - x\| \ + \label{eq:con:arg:3}\\
    &\quad+\zeta_{gi} \cdot 2^{-2n}\nonumber \\
    &< 0 \label{eq:con:arg:4}
\end{align}
for all $i= 1, \cdots, \aha$, where
\eqref{eq:con:arg:1} follows from Corollary \ref{cor:err_bound};
\eqref{eq:con:arg:2} follows by adding and subtracting $g_i(x)$;
\eqref{eq:con:arg:3} follows from \eqref{eq:ift:x} and the definition of $L_{gi}$;
\eqref{eq:con:arg:4} holds because $2^{-2n} < 2^{-n}$.
Therefore Condition \ref{cond:con:2} is satisfied.

\emph{Step 3 (Condition \ref{cond:con:3}).}
Let $L_{hj}$ be a Lipschitz constant of $h_j$ over $\uu$.
To ensure $\x$ satisfies the equality constraint tolerance $\eeq$, we require the following inequality to hold:
\begin{align}
\label{eq:con:req:2}
    n \geq \max_{j = 1, \cdots, \Neq} \left\{ -\log_2\left( \frac{\eeq}{\zeta_{hj} + L_{hj} \sqrt{l}} \right) \right\}.
\end{align}
Notice
\begin{align}
    \max B_{hj}(\x&) \leq \max_{x' \in \x} h_j(x) + \zeta_{hj} \cdot 2^{-2n} \label{eq:arg:step2:1} \\
    &\leq h_j(x) + L_{hj} \cdot \max_{x' \in \x} \|x' - x\| + \zeta_{hj} \cdot 2^{-2n} \label{eq:arg:step2:2} \\
    &\leq 0 + (L_{hj}\sqrt{l} + \zeta_{hj}) \cdot 2^{-n} \label{eq:arg:step2:3} \\
    &\leq \eeq \label{eq:arg:step2:4}
\end{align}
for all $j = 1, \cdots, \Neq$,
where
\eqref{eq:arg:step2:1} follows from Corollary \ref{cor:err_bound};
\eqref{eq:arg:step2:2} follows from the definition of $L_{hj}$;
\eqref{eq:arg:step2:3} holds because $2^{-2n} \leq 2^{-n}$;
and \eqref{eq:arg:step2:4} follows from \eqref{eq:con:req:2}.
Similarly, we may prove
\begin{align}
\label{eq:arg:step2:5}
    \min B_{hj}(\x) \geq -\eeq
\end{align}
for all $j = 1, \cdots, \Neq$.
It then follows from \eqref{eq:arg:step2:4}, \eqref{eq:arg:step2:5}, Theorem \ref{thm:err_bound}, and Corollary \ref{cor:err_bound} that
\begin{align}
    -\eeq \leq \min B_{hj}(\x) \leq 0 \leq \max B_{hj}(\x) \leq \eeq
\end{align}
for all $j = 1, \cdots, \Neq$.
Therefore Condition \ref{cond:con:3} is satisfied.

\emph{Step 4 (First part of Condition \ref{cond:con:4}).}
It is difficult to directly talk about the relationship between $B_p(\x)$ and $B_p(\y)$ for all $\y \in \LL$.
Instead, in Step 4 we bound the difference between $\max B_p(\x)$ and $p(x^*)$ by $\epsilon / 2$;
then, in Step 5, we bound the gap between $p(x^*)$ and $\min B_p(\y)$ for all $\y \in \LL$ also by $\epsilon / 2$.

Define
\begin{align}
\label{eq:def:b}
    b :=
    \begin{bmatrix}
        \zeta_{g1} + L_{g1} \sqrt{l} \\
        \vdots \\
        \zeta_{g\aha} + L_{g\aha} \sqrt{l} \\
        \rule{0pt}{3ex} 
        0 \\
        \vdots \\
        0
    \end{bmatrix}
    \begin{array}{@{} r @{}}
      \null \\ \phantom{\vdots} \\ \phantom{\vdots} \\
      \left.\begin{array}{@{}c@{}} \null\\ \null \\\null\end{array}\right\} l-\aha
    \end{array}.
\end{align}
By taking the total derivative of \eqref{eq:ift:x} with respect to $t$ at $t=0$ we obtain
\begin{align}
    0 = & \nabla_a F(\xi(0),0) \cdot \nabla_t \xi(0) + \nabla_t F(\xi(0),0) \\
    =& \tilde A \cdot \nabla_t \xi(0) + b
\end{align}
Since $\tilde A$ has full rank, $\nabla_t \xi(0) = - \tilde A^{-1} b$.
Using Taylor's theorem, $x := \xi(2^{-n})$ may be written as
\begin{align}
\label{eq:taylor:new:1}
    x = x^* + \nabla_t \xi (0) \cdot 2^{-n} + o(2^{-n})
\end{align}
where the first term follows from $\xi(0) = x^*$.
For sufficiently small $2^{-n}$, the third term in \eqref{eq:taylor:new:1} is dominated by $2^{-n}$; that is, there exists a number $R_2 > 0$, such that
\begin{align}
\label{eq:con:norm_x}
    \|x - x^*\| \leq \left\| 1^{l \times 1} - \tilde A^{-1} b \right\| \cdot 2^{-n} =: C_4 \cdot 2^{-n}
\end{align}

Let $L_p$ be the Lipschitz constant of $p$ in $\B_{R_2}(x^*)$. Suppose
\begin{align}
\label{eq:con:req:3}
    n \geq -\log_2 \left( \frac{\epsilon}{2 \left(L_p \sqrt{l} + L_p C_4 + \zeta_p \right)} \right),
\end{align}
where $\zeta_p$ is the constant in Theorem \ref{thm:err_bound} corresponding to polynomial $p$.
We then claim $\max B_p(\x) - p(x^*)$ is bounded by $\epsilon/2$.
To see this, notice
\begin{align}
    \max B_p(\x) - p(x^*)
    &\leq \max_{x' \in \x} p(x') - p(x^*) + \zeta_p \cdot 2^{-2n} \label{eq:con:arg2:1} \\
    &\leq \max_{x' \in \x} |p(x') - p(x)| + |p(x) - p(x^*)| + \zeta_p \cdot 2^{-n} \label{eq:con:arg2:2} \\
    &\leq \left( L_p \sqrt{l} + L_p C_4  + \zeta_p \right) \cdot 2^{-n} \label{eq:con:arg2:3} \\
    &\leq \frac{\epsilon}{2} \label{eq:con:arg2:4}
\end{align}
where
\eqref{eq:con:arg2:1} follows from Corollary \ref{cor:err_bound};
\eqref{eq:con:arg2:2} follows from triangle inequality and $2^{-2n} \leq 2^{-n}$;
\eqref{eq:con:arg2:3} follows from \eqref{eq:con:norm_x} and definition of $L_p$;
\eqref{eq:con:arg2:4} follows from \eqref{eq:con:req:2}.

For Condition \ref{cond:con:4} to be satisfied, we need to show that minimum value of every other Bernstein patch $\LL$ is within $\epsilon/2$ of $p(x^*)$;
we do this next.

\emph{Step 5 (Second part of Condition \ref{cond:con:4}).}
In this step, we bound the difference between $p(x^*)$ and $\min B_p(\y)$ for all $\y \in \LL$.
To do this, we first show that any $\y \in \LL$ solves a relaxed POP related to the original POP and parameterized by the current iteration $n$.
Then, we bound the difference between the solutions of the relaxed and original POPs.

Let $\y \in \LL$ be any subbox with maximum width $2^{-n}$.
Then,
\begin{enumerate}[label=(\alph*),ref=(\alph*)]
\setcounter{enumi}{5}
    \item $\min B_{gi}(\y) \leq 0$ for all $i = 1, \cdots, \Nineq$; \label{eq:cons:step4:a}
    \item $\min B_{hj}(\y) \leq 0$ for all $j = 1, \cdots, \Neq$; \label{eq:cons:step4:b}
    \item $\max B_{hj}(\y) \geq 0$ for all $j = 1, \cdots, \Neq$, \label{eq:cons:step4:c}
\end{enumerate}
from Theorem \ref{thm:cut-off}.
Notice \ref{eq:cons:step4:a} implies
\begin{align}
    0 &\geq \min_{y \in \y} g_i(y) - \zeta_{gi} \cdot 2^{-2n} \label{eq:cons:step4:arg1} \\
    &\geq \max_{y \in \y} g_i(y) - L_{gi} \sqrt{l} \cdot 2^{-n} - \zeta_{gi} \cdot 2^{-2n} \label{eq:cons:step4:arg2}
\end{align}
where \eqref{eq:cons:step4:arg1} follows from Theorem \ref{thm:err_bound};
\eqref{eq:cons:step4:arg2} follows from definition of $L_{gi}$.
For convenience, denote 
\begin{align}
    \gamma_{gi}(n) &:= L_{gi} \sqrt{l} \cdot 2^{-n} + \zeta_{gi} \cdot 2^{-2n}.
\end{align}
Then, \eqref{eq:cons:step4:arg2} can be written as
\begin{equation}
\label{eq:cons:_1}
    \max_{y \in \y} g_i(y) \leq \gamma_{gi}(n).
\end{equation}
Similarly, if we define
\begin{align}
    \gamma_{hj}(n) &:= L_{hj} \sqrt{l} \cdot 2^{-n} + \zeta_{hj} \cdot 2^{-2n},
\end{align}
then \ref{eq:cons:step4:b} and \ref{eq:cons:step4:c} can be written as
\begin{align}
    \max_{y \in \y} h_j(y) \leq \gamma_{hj}(n) \label{eq:cons:_2} \\
    \min_{y \in \y} h_j(y) \geq -\gamma_{hj}(n). \label{eq:cons:_3}
\end{align}

Now we are ready to introduce the relaxed POP:
\begin{align}
         \underset{y \in \uu}{\min} & \quad p(y) & \tag{$\regtext{P}_n$}\label{prog:relaxed_POP} \\
         \regtext{s.t} & \quad g_i(y) \leq \gamma_{gi}(n) & i = 1,\cdots,\,\Nineq \label{eq:cons:relaxed:1}\\
                       & \quad h_j(y) \leq \gamma_{hj}(n) & j = 1,\cdots,\,\Neq \label{eq:cons:relaxed:2}\\
                       & \quad h_j(y) \geq -\gamma_{hj}(n) & j = 1,\cdots,\,\Neq \label{eq:cons:relaxed:3}
\end{align}
Denote $y_n^*$ as the optimal solution to \eqref{prog:relaxed_POP}.
Notice that \eqref{eq:cons:relaxed:1}, \eqref{eq:cons:relaxed:2}, and \eqref{eq:cons:relaxed:3} are relaxations of \eqref{eq:cons:_1}, \eqref{eq:cons:_2}, and \eqref{eq:cons:_3} respectively.
That is, let $\y \in \LL$ and consider any $y \in \y$ that satisfies \eqref{eq:cons:_1}, \eqref{eq:cons:_2}, and \eqref{eq:cons:_3}.
It also necessarily satisfies \eqref{eq:cons:relaxed:1}, \eqref{eq:cons:relaxed:2}, and \eqref{eq:cons:relaxed:3}.
Therefore,
\begin{equation}
    p(y_n^*) \leq \min_{y \in \y} p(z)
\end{equation}
It can be obtained from Theorem \ref{thm:err_bound} that 
\begin{equation}
\label{eq:cons:step4:B&z*}
    \min_{\y \in \LL} B_p(\y) \geq \min_{y \in \y} p(y) - \zeta_p \cdot 2^{-2n} \geq p(y_n^*) - \zeta_p \cdot 2^{-2n}
\end{equation}

To estimate the value $p(y^*_n)$, notice from \cite[Corollary~4.1]{fiacco1978nonlinear},
\begin{align}
    \frac{\partial p(y_n^*)}{\partial \gamma_{gi}(n)} = & -\lambda_{gi}^* \\
    \frac{\partial p(y_n^*)}{\partial \gamma_{hj}(n)} = & -\lambda_{hj}^*
\end{align}
where $\lambda^*_{gi}$ and $\lambda^*_{hj}$ are Lagrange multipliers corresponding to the constraints $g_i$ and $h_j$ in \eqref{prog:relaxed_POP}, respectively, for all $i = 1, \cdots, \, \aha$ and $j = 1, \cdots, \, \Neq$.
Using Taylor's theorem, we obtain
\begin{align}
    \begin{split}
        p(y^*_n) \geq\,&p(x^*) - \sum_{i = 1, \cdots, \,\Nineq} \lambda^*_{gi} \gamma_{gi}(n) - \sum_{j = 1, \cdots, \,\Neq} \left| \lambda^*_{hj} \gamma_{hj}(n) \right| +  \\
                 &+ O \left(\sum_{i = 1, \cdots, \,\Nineq} \gamma_{gi}^2(n) + \sum_{j = 1, \cdots, \,\Neq} \gamma_{hj}^2(n) \right),
    \end{split}
\end{align}
for sufficiently small $\gamma_{gi}(n)$ and $\gamma_{hj}(n)$.

Since $\gamma_{gi}(n)$ and $\gamma_{hj}(n)$ can be made arbitrarily small by increasing $n$, there exists a constant $C_5$ such that
\begin{equation}
\label{eq:con:step4:taylor}
    p(y^*_n) \geq p(x^*) - \frac{1}{2} \left( \sum_{i = 1, \cdots, \,\Nineq} \lambda^*_{gi} \gamma_{gi}(n) + \sum_{j = 1, \cdots, \,\Neq} 
    \left|\lambda^*_{hj} \gamma_{hj}(n)\right| \right)
\end{equation}
for all $n \geq C_5$.
It then follows from \eqref{eq:cons:step4:B&z*} and \eqref{eq:con:step4:taylor} that
\begin{align}
    p(x^*) - \min_{\y \in \LL} B_p(\y) \leq & \frac{1}{2} \left( \sum_{i = 1, \cdots, \,\Nineq} \lambda^*_{gi} \gamma_{gi}(n) + \sum_{j = 1, \cdots, \,\Neq} \left| \lambda^*_{hj} \gamma_{hj}(n) \right| \right) + \zeta_p \cdot 2^{-2n}  \label{eq:cons:step4:p_minus_B}
\end{align}
Suppose $n$ satisfies
\begin{align}
\label{eq:con:req:4}
    n \geq -\log_2 \epsilon + \log_2 ( C_6 + 2 \zeta_p )
\end{align}
where
\begin{align}
\label{eq:con:def:c6}
    C_6 := \sum\limits_{i = 1, \cdots, \,\Nineq} \lambda^*_{gi} (L_{gi} \sqrt{l} + \zeta_{gi}) + \sum\limits_{j = 1, \cdots, \,\Neq} \left| \lambda^*_{hj} (L_{hj} \sqrt{l} + \zeta_{hj}) \right|
\end{align}
Then it follows from \eqref{eq:cons:step4:p_minus_B} that
\begin{align}
    \label{eq:cons:step4:p_minus_b_less_than_ep_over_2}
    p(x^*) - \min_{\y \in \LL} B_p(\y) \leq \frac{\epsilon}{2}.
\end{align}
By applying the triangle inequality to \eqref{eq:con:arg2:4} and \eqref{eq:cons:step4:p_minus_b_less_than_ep_over_2}, we fulfill Condition \ref{cond:con:4}.

\emph{Step 6 (Condition \ref{cond:con:0}).}
If $\x \in \LL$ in the above discussion, then such $\x$ satisfies Conditions \ref{cond:con:0}--\ref{cond:con:4} and therefore the proof is finished.
When $\x \not\in \LL$, however, we need to show there exists another subbox $\z \subset \uu$ that satisfies Conditions \ref{cond:con:0}--\ref{cond:con:4}.
This can be done using the Cut-Off Test in Theorem \ref{thm:cut-off}.

Suppose $\x \not\in \LL$.
Notice such $\x$ is \emph{not infeasible} per Definition \ref{def:feasible}.
According to Theorem \ref{thm:cut-off}, such $\x$ must be \emph{suboptimal} so that it is removed from the list $\LL$; that is, there exists a feasible $\z \in \LL$ such that
\begin{align}
\label{eq:p(z)_p(x)}
    \max B_p(\z) < \min B_p(\x)
\end{align}
Per Definition \ref{def:feasible}, such $\z$ satisfies Conditions \ref{cond:con:0}--\ref{cond:con:3}.
To show $\z$ satisfies Condition \ref{cond:con:4}, notice from \eqref{eq:con:arg2:4}, \eqref{eq:cons:step4:p_minus_b_less_than_ep_over_2},
and \eqref{eq:p(z)_p(x)} that
\begin{align}
    \max B_p(\z) - \min B_p(\y) <  \max B_p(\x) - \min B_p(\y) \leq \epsilon
\end{align}
for all $\y \in \LL$.

To conclude the proof, define constants
\begin{align}
    C_7 := & \max \{-\log_2 R_1, C_5\} \\
    C_8 := & \max_{j = 1, \cdots, \Neq} \log_2 \left( \zeta_{hj} + L_{hj} \sqrt{l} \right) \\
    C_9 := & \max \left\{ \log_2 \left( 2 \left( L_p \sqrt{l} + L_p C_4 + \zeta_p \right) \right), \log_2 (C_6 + 2 \zeta_p) \right\}
\end{align}
The result then follows.
\end{proof}

\subsection{Theorem \ref{thm:memory_usage:constrained} (Constrained Memory Usage)}

\begin{proof}
Consider an arbitrary global minimizer $x^* \in \uu$.
It follows from \eqref{eq:con:arg2:3} (in the proof of Theorem \ref{thm:roc:constrained}) that for sufficiently large $n$ there exists a subbox $\x \in \LL$ such that
\begin{align}
\label{eq:con:mem:bound_x_x*}
    \max B_p(\x) - p(x^*) \leq \left( L_p\sqrt{l} + L_p C_4 + \zeta_p \right) \cdot 2^{-n}
\end{align}
where $C_4$ is defined as in \eqref{eq:con:norm_x}.
Consider another feasible point $z \in \uu$ of \eqref{eq:POP}, and let $\z \subset \uu$ be the subbox that contains $z$.
Such $z$ is necessarily feasible to the relaxed problem \eqref{prog:relaxed_POP}.
According to \cite[(12.71)]{nocedal2006numerical}, we know
\begin{align}
\label{eq:con:mem:taylor:1}
    p(z) \geq p(y_n^*) + \|z - y_n^*\| \sum_{i = 1, \cdots, \aha} \lambda_{gi}^* \nabla g_i(y_n^*)^\top d + o(\|z - y_n^*\|)
\end{align}
where 
$y_n^*$ is the optimal solution to \eqref{prog:relaxed_POP};
$\lambda_{gi}^*$ are the Lagrange multipliers corresponding to $g_i$ in \eqref{prog:relaxed_POP} for all $i = 1, \cdots, \aha$;
and $d$ is any unit vector such that 
\begin{align}
    & \nabla g_i(y_n^*)^\top d \geq 0 \quad \forall\ i = 1, \cdots, \aha, \label{eq:def:d:1} \\
    & \nabla h_j(y_n^*)^\top d = 0 \quad \forall\ j = 1, \cdots, \Neq. \label{eq:def:d:2}
\end{align}
Since the critical cone of \eqref{prog:relaxed_POP} at $y_n^*$ is nonempty, for any $d$ there exists $i \in \{1, \cdots, \aha \}$ such that
\begin{align}
\label{eq:d:pos}
    \lambda_{gi}^* \nabla g_i(y_n^*)^\top d > 0
\end{align}
For notational convenience, define
\begin{align}
    J(d) = \sum_{i = 1, \cdots, \aha} \lambda_{gi}^* \nabla g_i(y_n^*)^\top d
\end{align}
with minimum $q := \min_d J(d)$.
In particular, it follows from the extreme value theorem that such minimum exists, since $J(d)$ is continuous with respect to $d$, and the set of unit vectors defined by \eqref{eq:def:d:1} and \eqref{eq:def:d:2} is closed and bounded (therefore compact according to Heine-Borel theorem).
It also follows from \eqref{eq:d:pos} that $q > 0$.
For any $z$ that is sufficiently close to $y_n^*$, 
we may then rewrite \eqref{eq:con:mem:taylor:1} as
\begin{align}
\label{eq:con:mem:q}
    p(z) \geq p(y_n^*) + \frac{q}{2} \|z - y_n^*\|
\end{align}

We claim for sufficiently large n and all $z \in \uu$ such that
\begin{align}
\label{eq:con:mem:def:c10}
    \|z - y_n^*\| \geq \frac{2\left( 2 L_p \sqrt{l} + L_p C_4 + C_6/2 + 2 \zeta_p \right)}{q} \cdot 2^{-n} =: C_{10} \cdot 2^{-n},
\end{align}
the subbox $\z$ cannot be in the list $\LL$. To prove this, notice
\begin{align}
    \min B(\z) \geq & \min_{z' \in \z} p(z') - \zeta_p \cdot 2^{-2n} \label{eq:con:mem:arg:1} \\
    \geq & p(z) - L_p \sqrt{l} \cdot 2^{-n} - \zeta_p \cdot 2^{-2n} \label{eq:con:mem:arg:2} \\
    \geq & p(y_n^*) + \left( \frac{q C_{10}}{2}- L_p \sqrt{l} - \zeta_p\right) \cdot 2^{-n} \label{eq:con:mem:arg:3} \\
    \geq & p(x^*) + \left( \frac{q C_{10}}{2}- L_p \sqrt{l} - \frac{C_6}{2} - \zeta_p \right) \cdot 2^{-n} \label{eq:con:mem:arg:4} \\
    \geq & \max B_p(\x) + \left( \frac{q C_{10}}{2} - 2 L_p \sqrt{l} - L_p C_4 - \frac{C_6}{2} - 2 \zeta_p \right) \cdot 2^{-n} \label{eq:con:mem:arg:5} \\
    \geq & \max B_p(\x) \label{eq:con:mem:arg:6} 
\end{align}
where
$L_p$ is the Lipchitz constant of $p$ over $\uu$,
and $C_6$ is defined as in \eqref{eq:con:def:c6}.
In particular,
\eqref{eq:con:mem:arg:1} follows from Theorem \ref{thm:err_bound};
\eqref{eq:con:mem:arg:2} follows from definition of $L_p$;
\eqref{eq:con:mem:arg:3} follows from \eqref{eq:con:mem:q};
\eqref{eq:con:mem:arg:4} follows from \eqref{eq:con:step4:taylor};
\eqref{eq:con:mem:arg:5} follows from \eqref{eq:con:mem:bound_x_x*};
\eqref{eq:con:mem:arg:6} follows from \eqref{eq:con:mem:def:c10}.
According to Theorem \ref{thm:cut-off}, $\z$ cannot be in the list $\LL$.

To conclude this proof, notice the above claim implies that for sufficiently large $n$, 
all subboxes in a neighborhood of $x^*$ must also be contained in a hypercube centered at $y_n^*$ with widths $(2 C_{10} + 2) \cdot 2^{-n}$.
Since the maximum width of all subboxes is $2^{-n}$ (Remark \ref{rem:subdiv}), 
the number of subboxes contained in a neighborhood of $x^*$ is bounded from above by
\begin{align}
    \left( \frac{(2 C_{10} + 2) \cdot 2^{-n}}{2^{-n}} \right)^l
    = \left( 2 C_{10} + 2 \right)^l.
\end{align}
Since there are $m$ global minimizers, the number of subboxes remaining in $\LL$ is bounded above by $m \cdot (2 C_{10} + 2)^l$, which is a constant
\end{proof}
\section{Objective Functions}\label{app:test_functions}

\subsection{Benchmark Problems}
The following problems are all from \cite{nataraj2011constrained}, with minor changes.
To report the global optima values and locations, we solved each problem 1000 times using MATLAB's \fmincon, with optimality and constraint tolerances set to $10^{-10}$, and random initial guesses.

\textbf{P1}:
\begin{align*}
\min\ &-x_1-x_2\\
\regtext{s.t.}\hspace{0.4em} &-2x_1^4+8x_1^3-8x_1^2+x_2-2 \leq 0 \\
                  &-4x_1^4+32x_1^3-88x_1^2+96x_1+x_2-36 \leq 0
\end{align*}
where $x_1\in[0,3]$ and $x_2\in[0,4]$.
The best optimal value we found is $-5.5080132636$ at the location $x = (2.3295201981,3.1784930655)$.

\textbf{P2}:
\begin{align*}
\min\ &658500x_1^3 + 68121x_1^2 + 2349x_1 + \\
    &+1000000x_2^3 -600000x_2^2 + 120000x_2 - 7973 \\
\regtext{s.t.}\hspace{0.4em} &-7569x_1^2 - 1392x_1 - 10000x_2^2 + 1000x_2 + 11 \leq 0 \\
                  &7569x_1^2 + 1218x_1 + 10000x_2^2 - 1000x_2 - 8.81 \leq 0
\end{align*}
where $x_1\in[0,1]$ and $x_2\in[0,1]$.
The best optimal value we found is $-6961.8138816446$ at $x = (0.0125862069,0.0084296079)$.

\textbf{P3}:
\begin{align*}
\min\ &x_1\\
\regtext{s.t.}\hspace{0.4em} &x_1^2-x_2 \leq 0 \\
                  &x_2-x_1^2(x_1-2)+10^{-5} \leq 0
\end{align*}
where $x_i\in[-10,10]$ for $i=1,2$.
The best optimal value we found is $3.0000011115$ at $x = (3.0000011115,9.0000066709)$.

\textbf{P4}:
\begin{align*}
\min\ &-2x_1+x_2-x_3\\
\regtext{s.t.}\hspace{0.4em} &x_1+x_2+x_3-4 \leq 0 \\
                  &3x_2+x_3-6 \leq 0 \\
                  &-{x}^T{A}^T{A}{x}+2{y}^T{A}{x}-\left\lVert{y}\right\rVert^2+0.25\left\lVert{b-z}\right\rVert^2 \leq 0 \\
                  &{A}=\left(\begin{array}{ccc}
                  0&0&1\\
                  0&-1&0 \\
                  -2&1&-1
                  \end{array}\right)\\
                  &{b}=(3,0,-4)^T\\
                  &{y}=(1.5,-0.5,-5)^T\\
                  &{z}=(0,-1,-6)^T
\end{align*}
where $x_1\in[0,2]$, $x_2\in[0,10]$, and $x_3\in[0,3]$.
The global optimum is $-4$ at $x = (0.5,0,3)$.

\textbf{P5}:
\begin{align*}
\min\ &x_3\\
\regtext{s.t.}\hspace{0.4em} &2x_1^2+4x_1x_2-42x_1+4x_1^3-x_3-14 \leq 0 \\
                  &-2x_1^2-4x_1x_2+42x_1-4x_1^3-x_3+14 \leq 0 \\
                   &2x_1^2+4x_1x_2-26x_1+4x_1^3-x_3-22 \leq 0 \\
                  &-2x_1^2-4x_1x_2+26x_1-4x_1^3-x_3+22 \leq 0
\end{align*}
where $x_i\in[-5,5]$ for $i=1,2,3$.
The best optimum found is $0$ at $x = (-0.3050690380,-0.9133455177,0)$.

\textbf{P6}:
\begin{align*}
\min\ &0.6224x_3x_4+1.7781x_2x_3^2+3.1661x_1^2x_4+19.84x_1x_3\\
\regtext{s.t.}\hspace{0.4em} &-x_1+0.0193x_3 \leq 0 \\
                   &-x_2+0.00954x_3 \leq 0 \\
                   &-\pi x_3^2x_4-\frac{4}{3}\pi x_3^3+750.1728 \leq  0 \\
                   &-240+x_4 \leq 0
\end{align*}
where $x_1\in[1,1.375]$, $x_2\in[0.625,1]$, $x_3\in[47.5,52.5]$, and $x_4\in[90,112]$.
The global optimum is $6395.5078$ (to the same number of decimal places as used in the cost function) at the location $x = (1,0.625,47.5,90.0)$ ;

\textbf{P7}:
\begin{align*}
\min\ &x_4\\
\regtext{s.t.}\hspace{0.4em} &x_1^4x_2^4-x_1^4-x_2^4x_3 = 0 \\
                   &1.4-0.25x_4-x_1\leq0 \\
                   &-1.4-0.25x_4+x_1\leq0 \\
                   &1.5-0.2x_4-x_2\leq0 \\
                   &-1.5-0.2x_4+x_2\leq0 \\
                   &0.8-0.2x_4-x_3\leq0 \\
                   &-0.8-0.2x_4+x_3\leq0
\end{align*}
where $x_i\in[0,5]$ for $i=1,2,3,4$.
The best optimal value we found is $1.0898639714$ at the location $x = (1.1275340071,1.2820272057,1.0179727943,1.0898639714)$.

$\textbf{P8}$:
\begin{align*}
\min\ &54.528x_2x_4+27.624x_1x_3-54.528x_3x_4\\
\regtext{s.t.}\hspace{0.4em} &61.01627586-I \leq 0 \\
                   &8x_1 - I(x) \leq 0 \\
                   &x_1x_2x_4-x_2x_4^2+x_1^2x_3+x_3x_4^2-2x_1x_3x_4-3.5x_3I(x) \leq 0 \\
                   &x_1 - 3x_2 \leq 0 \\
                   &2x_2-x_1 \leq 0 \\
                   &x_3-1.5x_4 \leq 0 \\
                   &0.5x_4-x_3 \leq 0 \\
                   &I(x) = 6x_1^2x_2x_3-12x_1x_2x_3^2+8x_2x_3^3+x_1^3x_4-6x_1^2x_3x_4+ \\
                   &\hspace{1cm}+12x_1x_3^2x_4-8x_3^3x_4
\end{align*}
where $x_1\in[3,20]$, $x_2\in[2,15]$, $x_3\in[0.125,0.75]$, and $x_4\in[0.25,1.25]$.
The best optimal value we found is $42.4440570797$ at the location $x = (4.9542421008,2,0.125,0.25)$.

\subsection{Increasing Constraints Problems}

\textbf{ElAttar-Vidyasagar-Dutta} (E-V-D) \cite{gavana2019testsuite}:
\begin{align*}
    (x_1^2+x_2-10)^2+(x_1+x_2^2-7)^2+(x_1^2+x_2^3-1)^2
\end{align*}
where $x_i\in[-100,100]$ for $i = 1, 2$.
The global optimum is $1.712780354$ at $x = (3.40918683,-2.17143304)$.

\textbf{Powell} \cite{gavana2019testsuite}:
\begin{align*}
    (x_1+10x_2)^2+5(x_3-x_4)^2+(x_2-2x_3)^4+10(x_1-x_4)^4
\end{align*}
where $x_i\in[-10,10]$ for $i = 1, 2, 3, 4$.
The global optimum is $0$ at $x = 0$.

\textbf{Wood} \cite{gavana2019testsuite}:
\begin{align*}
    (100(x_2-x_1^2))^2+(1-x_1)^2+90(x_4-x_3^2)^2+(1-x_3)^2+ \\
    +10.1((x_2-1)^2+(x_4-1)^2)+19.8(x_2-1)(x_4-1)
\end{align*}
where $x_i\in[-10,10]$ for $i=1,2,3,4$.
The global optimum is $0$ at $x = (1,1,1,1)$.

\textbf{Dixon-Price} (D-P) \cite{gavana2019testsuite}:
\begin{align*}
    (x_1-1)^2+\sum_{i=2}^di(2x_i^2-x_{i-1})^2
\end{align*}
where $x_i\in[-10,10]$ for $i = 1,\ldots,d$.
The global optimum is $0$ at $x_i=2^{-\frac{2^i-2}{2^i}}$ for $i=1,\ldots,d$.

\textbf{Beale}:
\begin{align*}
    (x_1x_2-x_1+1.5)^2+(x_1x_2^2-x_1+2.25)^2+\\
    +(x_1x_2^3-x_1+2.625)^2
\end{align*}
where $x_i\in[-10,10]$ for $i=1,2$.
The global optimum is $0$ at $x = (3,0.5)$.

\textbf{Bukin02} \cite{gavana2019testsuite}:
\begin{align*}
    100(x_2^2-0.01x_1^2+1)+0.01(x_1+10)^2
\end{align*}
where $x_1\in[-15,-5]$, $x_2\in[-3,3]$.
The global optimum is $-124.75$ at $x = (-15,0)$.

\textbf{Deckkers-Aarts} (D-A):
\begin{align*}
    10^5x_1^2+x_2^2-(x_1^2+x_2^2)^2+10^{-5}(x_1^2+x_2^2)^4
\end{align*}
where $x_i\in[-20,20]$ for $i=1,2$.
The global optimum is $-24777$ at $x = (0,\pm15)$.
\end{appendices}

\end{document}